\newcommand{\bC}{{\mathbb C}}
\newcommand{\bG}{{\mathbb G}}
\newcommand{\bK}{{\mathbb K}}
\newcommand{\bL}{{\mathbb L}}
\newcommand{\bM}{{\mathbb M}}
\newcommand{\bN}{{\mathbb N}}
\newcommand{\bQ}{{\mathbb Q}}
\newcommand{\bZ}{{\mathbb Z}}
\newcommand{\cA}{{\mathcal A}}
\newcommand{\cF}{{\mathcal F}}
\newcommand{\cH}{{\mathcal H}}
\newcommand{\cO}{{\mathcal O}}
\newcommand{\cR}{{\mathcal R}}
\newcommand{\cU}{{\mathcal U}}
\newcommand{\cX}{{\mathcal X}}
\newcommand{\cY}{{\mathcal Y}}
\newcommand{\fD}{{\mathfrak D}}
\newcommand{\fX}{{\mathfrak X}}
\newcommand{\oK}{\overline{K}}
\newcommand{\ox}{\overline{x}}
\newcommand{\oy}{\overline{y}}
\newcommand{\obQ}{\overline{\mathbb{Q}}}
\newcommand{\tV}{\widetilde{V}}
\newcommand{\et}{\text{et}}
\DeclareMathOperator{\gr}{{gr}}
\DeclareMathOperator{\rk}{{rk}}
\DeclareMathOperator{\End}{{End}}
\DeclareMathOperator{\Ind}{{Ind}}
\DeclareMathOperator{\id}{{id}}
\DeclareMathOperator{\Spec}{{Spec}}
\DeclareMathOperator{\coker}{coker}
\DeclareMathOperator{\Res}{{Res}}
\DeclareMathOperator{\Gal}{{Gal}}
\newcommand{\alg}{\mathrm{alg}}
\newcommand{\ip}{\frac{1}{p}}
\newcommand{\dR}{\mathrm{dR}}
\newcommand{\HT}{\mathrm{HT}}
\DeclareMathOperator{\Spa}{{Spa}}
\newcommand{\fin}{\mathrm{fin}}
\newcommand{\cycl}{\mathrm{cycl}}
\newcommand{\Higgs}{\mathrm{Higgs}}
\newcommand{\proj}{\mathrm{proj}}
\newcommand{\Mat}{\mathrm{Mat}}
\newcommand{\RH}{\mathrm{RH}}
\newcommand{\Hig}{\mathrm{H}}
\newcommand{\Id}{\mathrm{Id}}
\newcommand{\Fil}{\mathrm{Fil}}
\newcommand{\hkinf}{\widehat{K}_{\infty}}
\newcommand{\Kinf}{K_{\infty}}
\newcommand{\oX}{\overline{X}}
\newcommand{\ofX}{\overline{\fX}}
\newcommand{\os}{\overline{s}}
\newcommand{\oF}{\overline{F}}
\newcommand{\oL}{\overline{L}}
\newcommand{\oY}{\overline{Y}}
\newcommand{\opi}{\overline{\pi}}
\newcommand{\obZ}{\overline{\bZ}}
\newcommand{\ubQp}{\underline{\bQ_p}}
\newcommand{\an}{\mathrm{an}}
\newcommand{\proet}{\mathrm{proet}}
\newcommand{\pro}{\mathrm{pro}}
\newcommand{\Rmnum}[1]{\expandafter\@slowromancap\romannumeral #1@}
\newenvironment{manualconj}[1]{%
  \manualtheoreminner
}{\endmanualtheoreminner}
\newtheorem{conj}{Conjecture}
\newtheorem{pr}{Proposition}[section]
\newtheorem{thm}[pr]{Theorem}
\newtheorem{Th}{Theorem}
\newtheorem{lm}[pr]{Lemma}
\newtheorem{cor}[pr]{Corollary}
\theoremstyle{definition}
\newtheorem{rem}[pr]{Remark}
\theoremstyle{definition}
\newtheorem{example}[pr]{Example}
\newtheorem{question}[pr]{Question}
\newtheorem{definition}[pr]{Definition}
\numberwithin{equation}{section}
\begin{document}

\title[Geometrically irreducible $p$-adic local systems are de Rham up to a twist]
{Geometrically irreducible $p$-adic local systems are de Rham up to a twist}

\author[]{Alexander Petrov}
\address{Harvard University, USA}
\email{apetrov@math.harvard.edu}

\begin{abstract} 
We prove that any geometrically irreducible $\obQ_p$-local system on a smooth algebraic variety over a $p$-adic field $K$ becomes de Rham after a twist by a character of the Galois group of $K$. In particular, for any geometrically irreducible $\obQ_p$-local system on a smooth variety over a number field the associated projective representation of the fundamental group automatically satisfies the assumptions of the relative Fontaine-Mazur conjecture. The proof uses $p$-adic Simpson and Riemann-Hilbert correspondences of Diao-Lan-Liu-Zhu and the Sen operator on the decompletions of those developed by Shimizu. Along the way, we observe that a $p$-adic local system on a smooth geometrically connected algebraic variety over $K$ is Hodge-Tate if its stalk at one closed point is a Hodge-Tate Galois representation. Moreover, we prove a version of the main theorem for local systems with arbitrary geometric monodromy, which allows us to conclude that the Galois action on the pro-algebraic completion of $\pi_1^{\et}$ is de Rham.

 \end{abstract}

\maketitle

\section{Introduction}

Let $X$ be a smooth algebraic variety over a complete discretely valued field $K$ of characteristic $0$ with a perfect residue field of characteristic $p>0$.  Inside the category of $\bQ_p$-etale local systems on $X$ there is the subcategory of de Rham local systems \cite{scholze-rig}, \cite{lz}. To any de Rham local system one can canonically attach a vector bundle of the same rank with integrable connection and filtration satisfying Griffiths transversality. The primary examples of de Rham local systems are given by relative cohomology $R^n\pi_*\bQ_p$ of smooth proper morphisms $\pi:Y\to X$. Our main result is the following

\begin{Th}[Corollary \ref{main th as cor}]\label{main theorem}
Suppose that $\bL$ is a $\obQ_p$-local system on $X$ such that the restriction of $\bL$ to $X_{\oK}$ has no non-scalar endomorphisms: $\End_{X_{\oK}}(\bL)=\obQ_p\cdot\Id$. Then there exists a character of the Galois group $\chi:G_{K}\to \obQ_p^{\times}$ such that $\bL\otimes\chi$ is de Rham .
\end{Th}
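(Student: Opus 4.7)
The plan is to pass to the $p$-adic Simpson and Riemann--Hilbert side via the Diao--Lan--Liu--Zhu correspondences, which functorially attach to $\bL$ a Higgs bundle $(\cH,\theta)$ and a filtered flat bundle on $X_{\oK}$, each carrying a natural $G_K$-action. On the pro-\'etale decompletions of these objects, Shimizu constructs an \emph{arithmetic Sen operator} $\phi^{\mathrm{ari}}_{\bL}$, a canonical endomorphism encoding the infinitesimal Galois action. This operator is functorial in $\bL$, additive in tensor products, and crucially commutes with the Higgs field $\theta$ (respectively with the connection on the Riemann--Hilbert side).

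The key observation is that, since $\phi^{\mathrm{ari}}_{\bL}$ commutes with $\theta$, it corresponds under the $p$-adic Simpson correspondence to an endomorphism of $\bL|_{X_{\oK}}$. The hypothesis $\End_{X_{\oK}}(\bL)=\obQ_p\cdot\Id$ together with Schur's lemma therefore forces $\phi^{\mathrm{ari}}_{\bL}=c\cdot\Id$ for a single scalar $c\in\obQ_p$.

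Next, choose a continuous character $\chi\colon G_K\to\obQ_p^\times$ whose arithmetic Sen weight equals $c$; such a character always exists, because every element of $\obQ_p$ can be realised as the Sen weight of a $p$-adic power of the cyclotomic character (adjusted by an unramified twist if needed). Setting $\bL':=\bL\otimes \chi^{-1}$, the additivity of the Sen operator under tensor products yields $\phi^{\mathrm{ari}}_{\bL'}=0$.

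It then remains to deduce that $\bL'$ is de Rham from the vanishing of its arithmetic Sen operator. Such a vanishing first shows that all generalised Hodge--Tate weights of $\bL'$ are zero, so $\bL'$ is Hodge--Tate; this is also compatible with the abstract's observation that Hodge--Tateness can be checked at a single closed point. Upgrading Hodge--Tate to de Rham requires analyzing the Riemann--Hilbert output: one must show that the filtered flat bundle produced by the RH functor has the full expected rank, equivalently that Griffiths transversality and horizontality hold, which should follow from the vanishing of the Sen operator on the connection side in combination with Shimizu's decompletion. I expect this last step -- passing from the purely Sen-theoretic (hence Hodge--Tate) data to the stronger de Rham structure -- to be the main technical obstacle of the proof, where the fiberwise Hodge--Tate observation likely plays a central role in reducing de Rham-ness of a relative local system to a statement about a single $p$-adic Galois representation of $G_K$ with trivial Sen operator.
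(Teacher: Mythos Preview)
Your proposal contains a genuine error at the very first step. You claim that the Sen operator $\phi$ \emph{commutes} with the Higgs field $\theta$, and then invoke the hypothesis $\End_{X_{\oK}}(\bL)=\obQ_p\cdot\Id$ to force $\phi=c\cdot\Id$. But the Sen operator does \emph{not} commute with $\theta$: the actual relation (Proposition~\ref{main decompleted rh}(i), and already in Proposition~\ref{intro simpson summary} of the introduction) is
\[
\theta\circ\phi=(\phi-1)\circ\theta.
\]
Thus $\phi$ is not an endomorphism of the Higgs bundle, and your Schur-lemma argument collapses. Indeed the conclusion $\phi=c\cdot\Id$ is typically false: a geometrically irreducible local system can (and usually does) have several distinct Hodge--Tate weights, so $\phi$ is not scalar. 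Think of $R^1\pi_*\bQ_p$ for a non-isotrivial family of elliptic curves: it is geometrically irreducible with Hodge--Tate weights $0$ and $1$.

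The paper's argument exploits precisely this twisted commutation. From $\theta\circ\phi=(\phi-1)\circ\theta$ one sees that $\theta$ carries the generalized $\lambda$-eigenspace of $\phi$ into the generalized $(\lambda+1)$-eigenspace, so the eigenspace decomposition makes $(\Hig^{\log}(\bL),\theta)$ into a system of Hodge bundles. Indecomposability (from $\End_{X_{\oK}}(\bL)=\obQ_p$) then forces all eigenvalues into a single coset $a+\bZ$; and the \emph{nilpotent part} of $\phi-\lambda$ on each generalized eigenspace (rather than $\phi$ itself) is what commutes with $\theta$ and is therefore forced to vanish. This gives the Hodge--Tate conclusion after a twist. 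The upgrade to de Rham is a separate argument (Proposition~\ref{mock Ddr} and Theorem~\ref{de rham main}): one builds a $\Kinf$-linear filtered flat bundle $\cA(\bL)=\RH^{\log}(\bL)^{\phi^d=0}$ whose associated graded recovers $\Hig^{\log}(\bL)$, and again uses that a non-zero nilpotent Sen operator on $\cA(\bL)$ would descend to a nilpotent Higgs endomorphism. Your sketch of this last step is too vague to be assessed, and the ``fiberwise Hodge--Tate'' rigidity result you allude to plays no role in the proof of the main theorem.
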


This fact has been conjectured by Daniel Litt. More generally, our proof works for $X$ a smooth rigid analytic variety that can be embedded into a smooth proper $\oX$ such that $\oX\setminus X$ is a normal crossings divisor, and assuming additionally that $\bL$ descends to an $\cO_{E}$-local system for a finite extension $E\supset \bQ_p$. Theorem \ref{main theorem} has the following implications on our expectations of which conditions should be sufficient to guarantee that a local system comes from geometry. We say that a $\obQ_p$-etale local system on a scheme $X$ {\it comes from geometry} if there exists a dense open subscheme $U\subset X$ and a morphism $\pi:Y\to U$ such that $\bL|_U$ is a subquotient of $R^i\pi_*\obQ_p(j)$ for some $i,j$. Recall the relative form of the Fontaine-Mazur conjecture \cite{fm}, \cite{lz}:

\begin{conj}\label{fm original}
Suppose that an irreducible $\obQ_p$-local system $\bL$ on a smooth variety $X$ over a number field $F$ satisfies the conditions 

1)There is a finite set $S$ of places of $F$ and a smooth model $\fX$ of $X$ over $\cO_{F,S}$ such that $\bL$ extends to a local system on $\fX$.

2)For every place of $F$ above $p$ the restriction of $\bL$ to $X_{F_v}$ is a de Rham local system.

Then $\bL$ comes from geometry.
\end{conj}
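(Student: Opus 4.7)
The plan is to leverage Theorem \ref{main theorem} to neutralize hypothesis (2), and then attack the resulting essentially de Rham, irreducible, globally defined local system via compatible systems of Galois representations combined with modularity/automorphy inputs. As a first step, I would observe that since the conclusion ``comes from geometry'' is invariant under Tate twists of the coefficient field, any character twist $\bL \otimes \chi$ with $\chi : G_F \to \obQ_p^\times$ unramified outside $S$ and de Rham at $v|p$ can be absorbed; Theorem \ref{main theorem}, applied locally at each place $v$ above $p$, thus lets me assume that the restriction of $\bL$ to $X_{F_v}$ is de Rham at the cost of a global character twist whose effect on the geometric origin is harmless.

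Second, I would attempt to manufacture an $F$-compatible system $\{\bL_\lambda\}_\lambda$ of $\lambda$-adic local systems on $\fX/\cO_{F,S}$ with $\bL_p = \bL$. The natural machinery is Deligne's conjecture on companions and its relative form: over positive-characteristic bases this is available through the work of L.~Lafforgue, Drinfeld, Abe and Kedlaya; in mixed characteristic one would have to bootstrap from reductions modulo primes of $\cO_{F,S}$, showing that the characteristic polynomials of Frobenius at closed points of $\fX_{\mathbb{F}_q}$ assemble into an $F$-rational system independent of $\ell$, and that each $\bL_\lambda$ so obtained has the integral and ramification properties needed to glue over the generic fiber. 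Given such a system, the Hodge--Tate weights, geometric monodromy, and local parameters are all forced to agree, and one expects the system to be cut out by a motive over $X$.

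Third, having produced a candidate compatible system, I would isolate its geometric origin by a Lefschetz-style reduction: restrict to a sufficiently general curve $C \subset X$ and then to a closed point $x \in C$, where the problem becomes the classical (non-relative) Fontaine--Mazur conjecture for $\bL_x$ as a representation of $G_{k(x)}$. In the rank two case this is largely available by Kisin--Emerton modularity lifting, and in polarizable essentially self-dual situations by the Taylor--Wiles--Kisin--Thorne circle of results. One would then promote the motive over $x$ to a motive over a dense open of $X$ using the deformation-theoretic rigidity provided by the coefficient field $F$ and by the spreading datum $\fX$, and finally identify $\bL$ with a subquotient of the $p$-adic realization of that motive via Tannakian comparison.

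The main obstacle I expect is that each of the last two steps is itself essentially open in the required generality: the existence of compatible systems attached to a single $p$-adic local system on a characteristic-zero base is not known outside a few abelian or rank-two situations, and the pointwise Fontaine--Mazur conjecture is itself wide open for $\rk \bL \geq 3$ without polarizability. Consequently Theorem \ref{main theorem} contributes the \emph{de Rham} input essentially for free, but the conjecture cannot be closed without substantial new advances on either the Langlands correspondence for $\GL_n$ over number fields or a direct motivic reconstruction theorem for de Rham local systems; any honest plan therefore has to record these as genuine gaps rather than routine verifications.
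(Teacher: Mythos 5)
This statement is Conjecture~\ref{fm original} --- the relative Fontaine--Mazur conjecture --- which the paper \emph{states} but does not prove and does not claim to prove. There is therefore no ``paper's own proof'' to compare against. The paper's contribution is orthogonal: Theorem~\ref{fm intro} (via Corollary~\ref{main th as cor} and Proposition~\ref{automatically unramified}) shows that for a geometrically irreducible $\bL$, hypothesis (1) of the conjecture holds automatically after enlarging $S$, and hypothesis (2) holds automatically for the associated projective representation; Lemma~\ref{square trick} then reformulates the conclusion one could draw from the conjecture. None of this touches the conjecture's actual conclusion ``$\bL$ comes from geometry.''

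You have correctly diagnosed the situation: your proposal is an honest roadmap, not a proof, and you explicitly flag the two places it cannot be closed (existence of companions over a characteristic-zero base, and the pointwise Fontaine--Mazur conjecture in rank $\geq 3$). Those gaps are real and are precisely why this remains a conjecture. One small correction to your first paragraph: Theorem~\ref{main theorem} as stated produces, for each place $v \mid p$, a \emph{local} character $\chi_v : G_{F_v} \to \obQ_p^\times$ making $\bL|_{X_{F_v}} \otimes \chi_v$ de Rham; it does not produce a single global character $\chi : G_F \to \obQ_p^\times$ doing this simultaneously at all $v \mid p$. The paper notes (citing \cite[Example 6.8]{conradnote}) that such a global twist need not exist, which is exactly why Theorem~\ref{fm intro}(2) is phrased in terms of the projective representation and why Lemma~\ref{square trick} is needed to extract a geometric-origin statement from the conjecture. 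Your plan would need to account for this obstruction rather than absorbing the twist globally.
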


The above theorem implies that for a geometrically irreducible local system the second condition is automatically satisfied for the corresponding projective representation of the arithmetic fundamental group. First condition is also easily seen to be automatic (as proven in \cite{litt}, we recall the proof in Proposition \ref{automatically unramified}) yielding:

\begin{Th}\label{fm intro}
Let $\bL$ be a $\obQ_p$-local system on a smooth variety $X$ over a number field $F$ with the corresponding representation $\rho:\pi_1^{\et}(X,\ox)\to GL_d(\obQ_p)$. If $\rho|_{\pi_1^{\et}(X_{\oF},\ox)}$ is irreducible then

1)$\rho$ extends to $\pi_1^{\et}(\fX,\ox)$ where $\fX$ is a smooth scheme over $\cO_{F,S}$ for some finite set of places $S$ with $\fX\times_{\cO_{F,S}} F=X$.

2)For every place $v$ of $F$ above $p$ the projective representation $\rho_{\proj}|_{F_v}:\pi_1^{\et}(X_{F_v},\ox)\to GL_d(\obQ_p)\to PGL_{d}(\obQ_p)$ is de Rham.
\end{Th}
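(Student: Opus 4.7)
The plan is to derive both statements as formal consequences of Theorem \ref{main theorem} together with the spreading-out result Proposition \ref{automatically unramified}, which the paper recalls from Litt.

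For part 1, I would simply invoke Proposition \ref{automatically unramified}. Given a geometrically irreducible $\obQ_p$-local system $\bL$ on $X$, standard spreading-out produces a smooth model $\fX_0$ over $\cO_{F,S_0}$ with $\fX_0 \times_{\cO_{F,S_0}} F = X$, and Litt's argument (as recalled in that proposition) shows that after enlarging $S_0$ to some $S$, the local system $\bL$ extends to $\fX = \fX_0 \times_{\cO_{F,S_0}} \cO_{F,S}$. I would not reprove this.

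For part 2, fix a place $v \mid p$ and restrict to $X_{F_v}$. The first step is to verify that $\bL|_{X_{F_v}}$ still satisfies the hypothesis $\End_{X_{\oF_v}}(\bL|_{X_{F_v}}) = \obQ_p \cdot \Id$ of Theorem \ref{main theorem}. This is because the etale fundamental group of a finite-type scheme over an algebraically closed field of characteristic zero is insensitive to extensions of the base field: fixing compatible embeddings $\oF \hookrightarrow \oF_v$ identifies $\pi_1^{\et}(X_{\oF_v}, \ox)$ with $\pi_1^{\et}(X_{\oF}, \ox)$, so the endomorphism algebra of $\bL$ is preserved under this base change.

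Applying Theorem \ref{main theorem} then produces a continuous character $\chi_v : G_{F_v} \to \obQ_p^{\times}$ such that $\bL|_{X_{F_v}} \otimes \chi_v$ is de Rham. Since the quotient map $GL_d \to PGL_d$ annihilates scalar twists, the projective representation attached to $\bL|_{X_{F_v}} \otimes \chi_v$ coincides with $\rho_{\proj}|_{F_v}$; being the projectivization of a de Rham local system, $\rho_{\proj}|_{F_v}$ is de Rham by definition. There is no serious obstacle in this derivation: the analytic content is entirely carried by Theorem \ref{main theorem}, whose proof uses the $p$-adic Simpson and Riemann-Hilbert correspondences of Diao-Lan-Liu-Zhu together with Shimizu's Sen operator. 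The present theorem is essentially a bookkeeping exercise packaging that result together with Litt's automatic unramifiedness.
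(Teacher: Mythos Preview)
Your proposal is correct and matches the paper's approach: the paper states Theorem~\ref{fm intro} in the introduction as a direct consequence of Theorem~\ref{main theorem} (for part~2) and Proposition~\ref{automatically unramified} (for part~1), without giving further details. Your write-up fills in the obvious steps the paper leaves implicit, in particular the invariance of $\pi_1^{\et}$ under extension of algebraically closed base fields and the fact that the projectivization of a de Rham local system is de Rham via Example~\ref{projective rep example}.
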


The conclusion (2) of Theorem \ref{fm intro} is weaker than saying that $\bL$ can be made de Rham by twisting it by a character of $G_F$ as \cite[Example 6.8]{conradnote} shows. This issue does not occur when working with local systems over local fields by  a result of Patrikis \cite[Corollary 3.2.13]{patrikis}. This allows us to prove Theorem \ref{main theorem} in the stated form. Nevertheless, Moritz Kerz observed (Lemma \ref{square trick}) that Theorem \ref{fm intro} yields the following corollary of the relative Fontaine-Mazur conjecture:

\begin{manualconj}{1 bis}
Let $\bL$ be a $\obQ_p$-local system on a smooth variety $X$ over a number field $F$ such that the local system $\bL|_{X_{\oF}}$ on $X_{\oF}$ is irreducible. Then $\bL|_{X_{\oF}}$ comes from geometry.
\end{manualconj}

Alexander Beilinson observed that the above results can be generalized to the case of not necessarily irreducible local systems. Using the same techniques as in the proof of Theorem \ref{main theorem}, we show

\begin{Th}[Theorem \ref{main reducible}]
For any $\obQ_p$-local system $\bL$ on a smooth variety $X$ over $K$ there exists a de Rham local system $\bM$ such that there is an inclusion $\bL|_{X_{\oK}}\subset \bM|_{X_{\oK}}$ of local systems on $X_{\oK}$. 
\end{Th}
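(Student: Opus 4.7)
The plan is to prove the theorem by induction on the length of $\bL$ as a representation of $\pi_1^{\et}(X,\ox)$. Two kinds of flexibility make this possible: twisting $\bL$ by a character $\chi\colon G_K\to\obQ_p^{\times}$ leaves $\bL|_{X_{\oK}}$ unchanged, so Theorem~\ref{main theorem} can be applied freely to geometrically irreducible constituents; and replacing $K$ by a finite extension $L$ is harmless, since finite \'etale pushforward preserves the de Rham property and the restriction of the pushforward to $X_{\oK}$ contains the original restriction as a direct summand. After a finite base change if necessary, I may therefore assume that $\bL$ admits a filtration by $\pi_1^{\et}(X,\ox)$-subrepresentations with geometrically irreducible successive quotients. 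The base case of the induction, $\bL$ geometrically irreducible, is then Theorem~\ref{main theorem} itself.

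For the inductive step, consider a short exact sequence $0\to\bL'\to\bL\to\bL''\to 0$ with $\bL''$ geometrically irreducible and $\bL'$ of smaller length. Theorem~\ref{main theorem} gives a character $\chi''$ such that $\bM'':=\bL''\otimes\chi''$ is de Rham (and $\bM''|_{X_{\oK}}=\bL''|_{X_{\oK}}$ as $\pi_1^{\et}(X_{\oK})$-representations), and the inductive hypothesis provides a de Rham $\bM'$ on $X$ with $\bL'|_{X_{\oK}}\hookrightarrow\bM'|_{X_{\oK}}$. Pushing out $\bL|_{X_{\oK}}$ along $\bL'|_{X_{\oK}}\hookrightarrow\bM'|_{X_{\oK}}$ yields a $\pi_1^{\et}(X_{\oK})$-extension $\tilde\bL$ of $\bM''|_{X_{\oK}}$ by $\bM'|_{X_{\oK}}$ containing $\bL|_{X_{\oK}}$; its class $\xi$ lies in the $G_K$-module $H^1_{\et}(X_{\oK},\hom(\bM'',\bM')|_{X_{\oK}})$. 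Choosing a finite-dimensional $G_K$-stable subspace $S$ of this cohomology group containing $\xi$, I build the universal $\pi_1^{\et}(X)$-extension
\[
0\to\bM'\otimes S^{*}\to\bM\to\bM''\to 0
\]
whose class, via the Hochschild--Serre edge map, corresponds to the inclusion $S\hookrightarrow H^1_{\et}(X_{\oK},\hom(\bM'',\bM')|_{X_{\oK}})$. By construction $\bL|_{X_{\oK}}\subset\tilde\bL\subset\bM|_{X_{\oK}}$.

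The main obstacle will be verifying that $\bM$ is de Rham. Since de Rham local systems are closed under extensions, it suffices to show that $\bM'\otimes S^{*}$ is de Rham, and this requires $S$ to be a de Rham representation of $G_K$. The needed statement is therefore: for any de Rham local system $\bN$ on $X$, every finite-dimensional $G_K$-stable subspace of $H^1_{\et}(X_{\oK},\bN|_{X_{\oK}})$ is itself de Rham. I expect this to follow from relative $p$-adic Hodge theory---Scholze's theorem in the proper case, and its extension by Diao--Lan--Liu--Zhu to the log-smooth setup used throughout the paper. A secondary point, which can be handled by further enlarging $S$, is the vanishing of the Hochschild--Serre obstruction in $H^2(G_K,H^0(X_{\oK},\hom(\bM'',\bM')|_{X_{\oK}})\otimes S^{*})$ needed to lift the universal extension class to $\Ext^1_{\pi_1^{\et}(X)}$.
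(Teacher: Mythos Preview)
Your approach is genuinely different from the paper's. The paper does not induct on length at all; instead it constructs $\bM$ in a single step. After acquiring a $K$-point $x$, the paper forms the sub-local-system $\cF(\bL)\subset\bL^{\vee}\otimes_{\obQ_p}\bL_{\ox}$ generated by $\id_{\bL_{\ox}}$ (equivalently, the $\obQ_p$-span of the geometric monodromy inside $\End(\bL_{\ox})$) and sets $\bM=(\cF(\bL)^{\vee})^{\oplus\rk\bL}$. The point is then that $\cF(\bL)$ is de Rham, and this is proved by re-running the Sen-operator argument of Theorems~\ref{ht main} and~\ref{de rham main} with one modification: in place of the hypothesis ``every endomorphism of the Higgs bundle is scalar'' one uses the weaker but adequate statement ``every endomorphism of $\Hig^{\log}(\cF(\bL))$ annihilating the distinguished element $\id_{\Hig^{\log}(\bL)_x}$ is zero.'' Both the projector onto the non-integral-weight summand and the nilpotent endomorphism coming from $\phi$ on $\cA(\cF(\bL))$ kill this element, hence vanish. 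This route needs no comparison theorem for cohomology and no extension-class bookkeeping.

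Your inductive strategy is reasonable, and your expectation that finite-dimensional $G_K$-stable subspaces of $H^1_{\et}(X_{\oK},\bN)$ are de Rham for $\bN$ de Rham is correct: it is exactly the log de~Rham comparison of Diao--Lan--Liu--Zhu applied to $\hom(\bM'',\bM')$. The gap is in your treatment of the Hochschild--Serre obstruction. The phrase ``can be handled by further enlarging $S$'' does not make sense as stated: enlarging $S$ to $S'$ gives a surjection $(S')^*\twoheadrightarrow S^*$, hence a surjection on the coefficient module, which cannot kill a class in $H^2$. What one would actually need is a de Rham embedding $\bM'\otimes S^*\hookrightarrow\bN$ of arithmetic local systems such that the image of $d_2(\id_S)$ in $H^2(G_K,H^0(X_{\oK},\hom(\bM'',\bN)))$ vanishes; equivalently, a lemma asserting that every class in $H^2(G_K,W)$ for $W$ de Rham dies after a suitable de Rham enlargement $W\hookrightarrow W'$. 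Over a $p$-adic local field this can be arranged via Tate duality and the non-vanishing of $H^1_f(G_K,\bQ_p(1))$, but you have not supplied the argument, and over the general complete discretely valued $K$ considered in the paper Tate duality is unavailable, so even the fix is not straightforward. Note also that the existence of a section $G_K\to\pi_1^{\et}(X,\ox)$ does not by itself force $d_2^{0,1}$ to vanish.
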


This result can be restated in terms of the Galois action on the pro-algebraic completion of the fundamental group of $X_{\oK}$, generalizing the results of \cite{vologodsky}, \cite{olsson} regarding the pro-unipotent completion.

\begin{cor}[Proposition \ref{galois on proalg pi1}]
Any finite-dimensional subrepresentation $V\subset \obQ_p[\pi_1^{\pro-\alg}(X_{\oK},\ox)]$ of the Galois group $G_K$ is de Rham.
\end{cor}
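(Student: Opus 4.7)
The plan is to combine Theorem \ref{main reducible} with a Tannakian adjunction trick in order to embed $V$ into a de Rham Galois representation. Throughout I assume $\ox$ lies over a $K$-rational point of $X$, so that the homotopy exact sequence $1\to\pi_1^{\et}(X_{\oK},\ox)\to\pi_1^{\et}(X,\ox)\to G_K\to 1$ splits, giving a genuine $G_K$-action on $\pi_1^{\pro-\alg}(X_{\oK},\ox)$ and on its coordinate ring $\obQ_p[\pi_1^{\pro-\alg}(X_{\oK},\ox)]$, compatible with the right-translation action of $\pi_1^{\pro-\alg}$ via $\sigma(\gamma\cdot f)=\sigma(\gamma)\cdot\sigma(f)$.

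The first step is to enlarge $V$ to a local system on $X$. The ring $\obQ_p[\pi_1^{\pro-\alg}(X_{\oK},\ox)]$ is a rational representation of $\pi_1^{\pro-\alg}$ under right translations, so the right-translation span $\widetilde V$ of $V$ is finite-dimensional, and the compatibility above shows $\widetilde V$ remains $G_K$-stable. It is therefore a representation of $\pi_1^{\et}(X_{\oK},\ox)\rtimes G_K=\pi_1^{\et}(X,\ox)$, corresponding to a local system $\bL$ on $X$ with $\bL_{\ox}=\widetilde V$ and $V\subset\bL_{\ox}$ as a $G_K$-subrepresentation. Applying Theorem \ref{main reducible} to $\bL$ produces a de Rham local system $\bM$ on $X$ and an injection $\iota\colon\bL|_{X_{\oK}}\hookrightarrow\bM|_{X_{\oK}}$ of local systems on $X_{\oK}$. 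Although $\iota$ need not be $G_K$-equivariant on fibers, the finite-dimensional $G_K$-representation $H:=\Hom_{\pi_1^{\et}(X_{\oK})}(\bL_{\ox},\bM_{\ox})$ carries a $G_K$-equivariant evaluation $\bL_{\ox}\otimes H\to\bM_{\ox}$ whose adjoint $\bL_{\ox}\to\bM_{\ox}\otimes H^{\vee}$ is $G_K$-equivariant; it is injective because pairing with the element $\iota\in H$ is already injective, so $V$ embeds $G_K$-equivariantly into $\bM_{\ox}\otimes H^{\vee}$.

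It remains to show that $H$ is de Rham, after which $V$ will be de Rham as a $G_K$-subrepresentation of a tensor product of two de Rham representations. I identify $H$ with $H^0(X_{\oK},(\bL^{\vee}\otimes\bM)|_{X_{\oK}})$ and apply Theorem \ref{main reducible} to the internal-Hom local system $\bL^{\vee}\otimes\bM$ on $X$, producing a de Rham $\bM''$ together with an injection $(\bL^{\vee}\otimes\bM)|_{X_{\oK}}\hookrightarrow\bM''|_{X_{\oK}}$. Running the same adjunction argument for this pair and restricting to $\pi_1^{\et}(X_{\oK})$-invariants places $H$ inside the tensor product of the de Rham representation $(\bM''_{\ox})^{\pi_1^{\et}(X_{\oK})}$ with an auxiliary finite-dimensional $G_K$-representation $H''$. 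The main technical obstacle is controlling this recursion, since $\dim H''$ can exceed $\dim H$ and the naive iteration need not terminate. The cleanest resolution is Tannakian: one shows that the collection of $G_K$-representations arising in this way forms a Tannakian subcategory of the category of continuous $G_K$-representations, and that Theorem \ref{main reducible} forces this subcategory to be contained in the de Rham subcategory, closing the induction.
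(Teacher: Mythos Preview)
Your setup is sound: enlarging $V$ to its right-translation span $\widetilde V$ and recognising this as the fiber of a local system $\bL$ on $X$ is exactly right, and the adjunction trick producing a $G_K$-equivariant embedding $\bL_{\ox}\hookrightarrow\bM_{\ox}\otimes H^{\vee}$ is correct. The gap is precisely where you identify it: you never actually prove that $H$ is de Rham. The final paragraph does not contain an argument --- ``forms a Tannakian subcategory'' and ``closing the induction'' are aspirations, not proofs. The recursion you describe does not terminate, and there is no well-founded quantity that decreases: each application of Theorem~\ref{main reducible} produces a new auxiliary Hom-space of uncontrolled dimension, and you have given no reason why the class of representations so obtained is contained in the de Rham ones.

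The paper avoids this loop entirely by staying inside the coordinate ring. The point is that once $\bL|_{X_{\oK}}\hookrightarrow\bM|_{X_{\oK}}$, every \emph{matrix coefficient} of $\bL|_{X_{\oK}}$ is already a matrix coefficient of $\bM|_{X_{\oK}}$: if $\rho,\rho'$ denote the corresponding arithmetic representations then the image of $\rho^*:\obQ_p[GL_n]\to\obQ_p[\pi_1^{\pro-\alg}(X_{\oK},\ox)]$ lies in the image of $\rho'^*$. Now the $G_K$-action on $\obQ_p[\pi_1^{\pro-\alg}(X_{\oK},\ox)]$ is by conjugation, so on the image of $\rho'^*$ it factors through the conjugation action of $GL(W)$ on $\obQ_p[GL(W)]$ via the fiber map $G_K\to GL(W)$, $W=\bM_{\ox}$. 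As a $GL(W)$-representation, $\obQ_p[GL(W)]$ is a union of summands of $W^{\otimes a}\otimes W^{\vee\otimes b}\otimes(\det W)^c$; since $\bM$ is de Rham, $W$ is de Rham, and hence every finite-dimensional $G_K$-subrepresentation of the image of $\rho'^*$ --- in particular $V$ --- is de Rham. No auxiliary $H$ appears and no induction is needed. In your language: instead of tensoring $\bM_{\ox}$ with the unknown $H^{\vee}$, one tensors it with $\bM_{\ox}^{\vee}$ (and higher tensors), which is already known to be de Rham.
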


We will now sketch the proof of Theorem \ref{main theorem} in the case where $X$ is proper assuming also that $\bL$ is a $\bQ_p$-local system with $\End_{X_{\oK}}\bL=\bQ_p\cdot\Id$ with the conclusion of the theorem weakened to $\bL\otimes_{\bQ_p}\chi$ being Hodge-Tate. The main tool used in the proof is a decompleted version of the $p$-adic Simpson correspondence as developed by \cite{lz} and \cite{kht}, summarized in the following. Put $\Kinf=K(\mu_{p^n}|n\in\bN)$.

\begin{pr}\label{intro simpson summary}
For a smooth proper algebraic variety $X$ over $K$ to any $\bQ_p$-local system $\bL$ on $X$ one can associate a Higgs bundle $(\Hig(\bL),\theta)$ on $X_{\Kinf}$ equipped with an $\cO_{X_{\Kinf}}$-linear endomorphism $\phi:\Hig(\bL)\to \Hig(\bL)$ such that $\theta\circ\phi=(\phi-1)\circ\theta$. This construction satisfies the following properties:

$(\mathrm{i})$ $\Hig$ is a tensor functor compatible with duality.

$(\mathrm{ii})$ We have $\dim_{\bQ_p}H^i_{\et}(X_{\oK},\bL)=\dim_{\Kinf}H^i_{\Higgs}(X_{\Kinf},\Hig(\bL))$.

$(\mathrm{iii})$ The characteristic polynomial $\det(T\cdot\Id_{\Hig(\bL)}-\phi)$ of $\phi$ has coefficients in $K$. A local system $\bL$ is Hodge-Tate if and only if $\phi$ is a semi-simple operator with eigenvalues in $\bZ$.
\end{pr}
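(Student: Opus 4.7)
The plan is to assemble the proposition from the two key inputs: the $p$-adic Simpson correspondence of Liu--Zhu \cite{lz}, which produces the Higgs bundle $(\Hig(\bL),\theta)$ on $X_{\Kinf}$, and its arithmetic refinement by Shimizu \cite{kht}, which supplies the endomorphism $\phi$ together with its interaction with $\theta$.

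First I would construct $\Hig(\bL)$ as the decompletion along the cyclotomic tower $K \subset \Kinf$ of the pro-étale pullback $\widehat{\cO}_X \otimes_{\bQ_p} \bL$. This yields a finite locally free $\cO_{X_{\Kinf}}$-module equipped with a Higgs field $\theta$ taking values in $\Hig(\bL) \otimes \Omega^1_{X/K}(-1)$, extracted from the pro-étale period sheaves as in \cite{lz}. Tensor functoriality and duality compatibility in $(\mathrm{i})$ are then immediate, since both decompletion and the construction of $\theta$ respect tensor products and preserve ranks. The dimension equality in $(\mathrm{ii})$ is the cohomology comparison theorem of \cite{lz}: in the proper case étale cohomology of $\bL$ over $\oK$ agrees with the hypercohomology of the Higgs complex of $\Hig(\bL)$, and the decompletion step from $\widehat{\oK}$ down to $\Kinf$ is exact and preserves cohomology, so the total $\Kinf$-dimensions match the $\bQ_p$-dimensions.

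The endomorphism $\phi$ is Shimizu's Sen operator: the residual semilinear action of $\Gamma = \Gal(\Kinf/K) \cong \bZ_p$ on the decompleted Simpson module, normalized by the cyclotomic character, differentiates to an $\cO_{X_{\Kinf}}$-linear endomorphism. The twisted commutation $\theta \circ \phi = (\phi - 1)\circ \theta$ is forced by the Tate twist in the target of $\theta$, since the Sen operator acts as multiplication by $n$ on the $n$-th Tate twist, so $(\phi\otimes \Id_{\Omega^1(-1)})\circ\theta = (\phi-1)\circ\theta$.

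For $(\mathrm{iii})$, the characteristic polynomial of $\phi$ has coefficients in $H^0(X_{\Kinf}, \cO_{X_{\Kinf}}) \subset \Kinf$ (reducing first to the geometrically connected case) and is $\Gamma$-invariant by the $\Gamma$-equivariance of $\phi$; since $\Kinf^{\Gamma} = K$, the coefficients descend to $K$. The Hodge--Tate characterization is the global analogue of Sen's theorem: $\bL$ is Hodge--Tate precisely when $\widehat{\cO}_X \otimes_{\bQ_p} \bL$ splits, pro-étale locally, as a direct sum of Tate twists of $\widehat{\cO}_X$, which at the level of the decompleted Simpson module is exactly the condition that $\phi$ be semisimple with eigenvalues in $\bZ$. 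The main technical obstacle is producing the decompleted Sen operator $\phi$ together with its $\Gamma$-equivariance and the twisted commutation with $\theta$; once these are extracted from Shimizu's construction, the remaining points amount to bookkeeping and invocations of the cited comparison theorems.
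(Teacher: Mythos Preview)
Your proposal is correct and follows essentially the same route as the paper: the Higgs bundle with its $\Gamma$-equivariant structure comes from Liu--Zhu's construction (Theorem~\ref{liu zhu rh}), the Sen operator $\phi$ and the relation $\theta\circ\phi=(\phi-1)\circ\theta$ are obtained by globalizing Shimizu's decompletion (Propositions~\ref{vb decompletion} and~\ref{main decompleted rh}(i)), the descent of the characteristic polynomial to $K$ is argued exactly as you do via $\Gamma$-invariance of global functions on a proper geometrically connected $X$ (Proposition~\ref{main decompleted rh}(ii)), and the Hodge--Tate criterion is \cite[Theorem~5.5]{kht} (Lemma~\ref{senoperator criteria}(ii)). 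The only cosmetic difference is that the paper packages $\Hig(\bL)$ as $\gr^0$ of the decompleted Riemann--Hilbert bundle rather than directly as the decompletion of $\bL\otimes\widehat{\cO}_X$, but in the smooth proper case these amount to the same object.
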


The relation $\theta\circ \phi=(\phi-1)\circ \theta$ implies that generalized  eigenspace decomposition of $\Hig(\bL)$ with respect to the endomorphism $\phi$ gives $(\Hig(\bL),\theta)$ a structure of a system of Hodge bundles in the sense of \cite[Section 4]{simpson}. By Proposition \ref{intro simpson summary}, the condition $\End_{X_{\oK}}\bL=\bQ_p\cdot\Id$ implies that all endomorphisms of the Higgs bundle $(\Hig(\bL),\theta)$ are scalars. In particular, it is indecomposable and therefore eigenvalues of $\phi$ must lie in a subset of the form $a+\bZ\subset\oK$ for some $a\in\oK$. Twisting by an appopriate character we may assume that $a=0$ so $\phi$ has integral eigenvalues and, if $\phi$ was not semi-simple, the Higgs bundle would have a non-zero nilpotent endomorphism. Therefore, the twsited local system is Hodge-Tate.

We then deduce (Theorem \ref{de rham main}) that a twist of $\bL$ is not just Hodge-Tate but is also de Rham by employing the $p$-adic Riemann-Hilbert correspondence of \cite{lz}, \cite{dllz}. Building on the work of \cite{kht}, we obtain (Proposition \ref{main decompleted rh}) a decompleted version $\RH(\bL)$ of the correspondence. The main observation then (Proposition \ref{mock Ddr}) is that for any local system $\bL$ with integral Hodge-Tate weights one can strip off the associated $\Kinf((t))$-linear filtered vector bundle with connection $\RH(\bL)$ to get a $\Kinf$-linear filtered vector bundle with connection whose associated graded is $\Hig(\bL)$. One then utilizes the same idea as above: if $\bL$ was not de Rham, the Sen endomorphism would provide a non-scalar endomorphism of the Higgs bundle $\Hig(\bL)$.

To generalize this proof from $\bQ_p$ local systems to $\obQ_p$-local systems one simply needs to redo the above argument with $\bL$ equipped with an action of a finite extension $E$ of $\bQ_p$. To cover the case of a non-proper $X$ we use the logarithmic versions of $p$-adic Simpson and Riemann-Hilbert correspondences developed in \cite{dllz}.

As a byproduct of the above method, we show the following rigidity statement for the Hodge-Tate property of fibers of a local system on an algebraic variety

\begin{pr}[Proposition \ref{ht locally free}  \& Corollary \ref{ht rigidity main}]
If $\bL$ is a $\bQ_p$-local system on a smooth {geometrically} connected algebraic variety $X$ over $K$, then the Higgs sheaf $D_{\HT}(\bL)$ is locally free. In particular, $\dim_{k(x)} D_{\HT}(\bL_{\ox})$ does not depend on the choice of a closed point $x\in X$ and, if the representation of the Galois group $\Gal{\overline{k(x)}/k(x)}$ is Hodge-Tate for some closed point $x\in X$ then the local system $\bL$ is Hodge-Tate.
\end{pr}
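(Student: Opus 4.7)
The plan is to leverage Proposition \ref{intro simpson summary}, and in particular the constancy (iii) of the characteristic polynomial $c(T) = \det(T\cdot\Id - \phi) \in K[T]$ of the Sen endomorphism $\phi$ on $\Hig(\bL)$. First I would factor
\[
c(T) = p(T)\,q(T),\qquad p(T) = \prod_{n\in\bZ}(T-n)^{a_n},
\]
in $K[T]$, where $p$ collects the integer roots with their multiplicities and $q$ has no integer root. A Bezout identity in $K[T]$ together with Cayley--Hamilton ($c(\phi)=0$) produces complementary $\cO_{X_{\Kinf}}$-linear idempotents in $\End(\Hig(\bL))$, yielding a direct-sum decomposition $\Hig(\bL)=N\oplus M$ into $\phi$-stable $\cO$-linear summands of constant ranks $\deg p$ and $\deg q$; in particular both are locally free. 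Since $\theta\phi=(\phi-1)\theta$ implies that $\theta$ shifts generalized $\phi$-eigenvalues by $+1$, and integrality is preserved under this shift, the decomposition is also $\theta$-stable, so $N$ and $M$ are Higgs sub-bundles; and because $p,q\in K[T]$, the decomposition is Galois-equivariant and descends to $X$.

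I would then identify $D_{\HT}(\bL)$ with the integer-weight summand $N$, graded by generalized $\phi$-eigenvalue and with Higgs field inherited from $\theta$. This is essentially tautological from the construction of the Hodge--Tate decompletion functor in \cite{lz}, \cite{dllz} after \cite{kht}: $D_{\HT}(\bL)$ is by definition the maximal $\phi$-stable subsheaf of $\Hig(\bL)$ on which $\phi$ has integer generalized eigenvalues. Local freeness of $D_{\HT}(\bL)$ then follows immediately, together with the constancy of $\dim_{k(x)} D_{\HT}(\bL_{\ox}) = \rk N = \deg p$ in $x$.

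For the rigidity statement, suppose $\bL_{\ox_0}$ is Hodge--Tate for some closed point $x_0 \in X$. Then $\dim D_{\HT}(\bL_{\ox_0}) = \rk\bL$, so by constancy of $\dim D_{\HT}(\bL_{\ox})$ every fiber $\bL_{\ox}$ is Hodge--Tate; equivalently, $M=0$ globally and the Sen operator $\phi_{\ox}$ is semisimple with integer eigenvalues at every closed point. Writing $N = \bigoplus_n N_n$ with $N_n := \ker(\phi-n)^{a_n}$, pointwise semisimplicity of $\phi$ is equivalent to the nilpotent $\cO$-linear endomorphism $(\phi-n)|_{N_n}$ vanishing at every closed fiber. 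Since $X_{\Kinf}$ is reduced (as $X$ is smooth), a section of the coherent sheaf $\End(N_n)$ vanishing at every closed point vanishes globally; so $\phi$ is semisimple as a global endomorphism of $N = \Hig(\bL)$, making $\bL$ Hodge--Tate by Proposition \ref{intro simpson summary}(iii).

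The delicate step in this plan is the identification of $D_{\HT}(\bL)$ with $N$ in the second paragraph: matching the abstractly defined $D_{\HT}$ from the $p$-adic period-sheaf formalism with the concrete integer-eigenvalue summand produced by the Bezout decomposition requires the full decompletion machinery of Shimizu and is the technical locus of the argument. Once that identification is in hand, everything else is formal from the constancy of the characteristic polynomial of $\phi$ and the fact that a nilpotent endomorphism of a vector bundle on a reduced base vanishes as soon as it vanishes at every closed point.
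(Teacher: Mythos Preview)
Your argument has a genuine gap at exactly the point you flag as delicate: the identification of $D_{\HT}(\bL)$ with the summand $N$ of integer \emph{generalized} eigenvalues is incorrect. By Shimizu's description (see \cite[Proposition 5.3, Theorem 5.5]{kht}), one has
\[
D_{\HT}(\bL)\otimes_K K_\infty \;\simeq\; \bigoplus_{i\in\bZ}\ker(\phi-i)\ \subset\ \Hig(\bL),
\]
i.e.\ $D_{\HT}$ picks out the honest \emph{eigenspaces} for integer eigenvalues, not the generalized eigenspaces $N_n=\ker(\phi-n)^{a_n}$. Your $N=\bigoplus_n N_n$ is always locally free of rank $\deg p$ simply because it is a direct summand cut out by a constant polynomial in $\phi$; but if $\phi$ fails to be semisimple on $N$, then $\bigoplus_i\ker(\phi-i)$ is strictly smaller than $N$, and its rank could in principle jump. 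Establishing that it does not jump is the entire content of the proposition, and your Bezout/Cayley--Hamilton argument does not touch it.

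This misidentification then propagates to the rigidity argument. With the correct $D_{\HT}$, the hypothesis ``$\bL_{\ox_0}$ is Hodge--Tate'' tells you that $\phi$ is semisimple with integer eigenvalues \emph{at $x_0$ only}; you cannot conclude semisimplicity at every closed point from constancy of $\rk N$, so your step ``$(\phi-n)|_{N_n}$ vanishes at every closed fibre'' is unjustified, and the reducedness argument never gets off the ground. What the paper does instead is prove a vanishing rigidity lemma: any Higgs endomorphism of $\Hig^{\log}(\bL)$ that vanishes at a single point vanishes identically. This is a global statement resting on the comparison $\End_{\Higgs}\Hig^{\log}(\bL)\otimes C\simeq \End_{X_{\oK}}(\bL)\otimes C$ and the fact that endomorphisms of a local system inject into endomorphisms of any stalk. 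Applying it to exterior powers of the nilpotent part $f$ of $\phi$ (which commutes with $\theta$ thanks to $\theta\phi=(\phi-1)\theta$) forces the rank of $f$ to be constant, giving local freeness of $\bigoplus_i\ker(\phi-i)$; and applying it directly to $f$, which vanishes at $x_0$ by the Hodge--Tate assumption there, gives $f=0$ globally and hence the rigidity. Your proposal is missing this comparison input entirely.
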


The proof rests on the observation that the Higgs bundles attached to local systems on connected algebraic varieties via $p$-adic Simpson correspondence satisfy a special property (Lemma \ref{vanishing rigidity}): if a section annihilated by the Higgs field vanishes at one point then it is identically zero. This forces the Sen operator to be semi-simple if it is known that it is semi-simple at one point. The statement of this result is similar to Liu-Zhu's rigidity theorem for the de Rham condition \cite[Theorem 1.3]{lz} and Shimizu's constancy of Hodge-Tate weights that was already used above. The proofs, however, have different flavors and are exploiting entirely different effects: the rigidity theorems of \cite{lz} and \cite{kht} are local statements that are proven for arbitrary smooth rigid-analytic bases while our result is global: it uses crucially the comparison between the global sections of a local system and the associated Higgs bundle. It might well be the case that the rigidity is false for local systems on non-partially proper rigid analytic varieties.
 
{\bf Acknowledgements.} I am grateful to Alexander Beilinson, H\'el\`ene Esnault, Mark Kisin, Moritz Kerz, Dmitry Kubrak, Daniel Litt, Koji Shimizu, and Bogdan Zavyalov for several interesting and useful discussions. I am especially thankful to Alexander Beilinson for his generalization of the main result to the non-irreducible case and for the idea of looking at these results from the vantage point of Galois action on the fundamental group. I also want to thank H\'el\`ene Esnault, Mark Kisin, Shizhang Li, and Koji Shimizu for the comments on the draft of this text.

\section{Preliminaries on $p$-adic Riemann-Hilbert correspondence}
\label{section2}

Generalizing Fontaine's notions of Hodge-Tate and de Rham representations of the Galois group of $K$, Brinon \cite{brinon}, Scholze \cite{scholze-rig} and in this particuar form Liu and Zhu \cite{lz} have constructed the following functors for any smooth rigid-analytic variety $X$ over $K$. First, there is a functor $$D_{\dR}:\{\bQ_p-\text{local systems on }X\}\rightarrow \{\parbox{15em}{\centering Filtered vector bundles on $X$ with a Griffiths transverse connection}\}$$ A local system $\bL$ is called {\it de Rham} if the output of this functor $D_{\dR}(\bL)$ has rank equal to $\rk_{\bQ_p}\bL$. Likewise, there is a functor $$D_{\HT}:\{\bQ_p-\text{local systems on }X\}\rightarrow\{\parbox{9em}{\centering Coherent sheaves on $X$ with a Higgs field}\}$$ A {\it Hodge-Tate} local system is one for which $D_{\HT}(\bL)$ is a locally free sheaf of rank $\rk_{\bQ_p}\bL$. For convenience of notation let us pick a geometric point $\ox$ of $X$. Let $\pi_1^{\et}(X,\ox)$ denote the fundamental group of $X$ in the sense of de Jong \cite{de-jong}.

\begin{definition}
An etale $\obQ_p$-local system of rank $d$ on a rigid-analytic variety $X$ is a continuous homomorphism $\rho:\pi_1^{\et}(X,\ox)\to GL_d(\obQ_p)$ that factors through $GL_d(E)$ for some finite extension $E\supset \bQ_p$. We say that an $\obQ_p$-local system admits a $\obZ_p$-lattice if $\rho$ is, moreover, conjugate to a homomorphism into $GL_d(\cO_E)\subset GL_d(\obQ_p)$. This condition is equivalent to $\rho$ factoring through the profinite completion of $\pi_1^{\et}(X,\ox)$.
\end{definition}

If $X$ appears as the analytification of an algebraic variety then any $\obQ_p$-local system on this algebraic variety gives rise to a $\obQ_p$-local system in the above sense and it automatically admits a $\obZ_p$-lattice. The proofs of all our results will require the existence of $\obZ_p$-lattices.

\begin{definition}
For a reductive group $G$ over $\bQ_p$ a continuous representation $\pi_1^{\et}(X,\ox)\to G(\bQ_p)$ is {\it de Rham} (resp. {\it Hodge-Tate}) if for some (equivalently, every) embedding $G\hookrightarrow GL_{n,\bQ_p}$ the rank $n$ $\bQ_p$-local system corresponding to $\pi_1^{\et}(X,\ox)\to G(\bQ_p)\to GL_n(\bQ_p)$ is de Rham (resp. {Hodge-Tate}). For any finite extension $E$ of $\bQ_p$ an $E$-local system $\bL$ on $X$ is called {\it de Rham} (resp. {\it Hodge-Tate}) if the corresponding representation into $(\Res^E_{\bQ_p}GL_n)(\bQ_p)$ is de Rham (resp. {Hodge-Tate}).  We call a $\obQ_p$-local system {\it de Rham} (resp. {\it Hodge-Tate}) if any (equivalently, every) of its descents to an $E$-local system for some finite extension $E/\bQ_p$ is de Rham (resp. {Hodge-Tate}).
\end{definition}

The property of being de Rham or Hodge-Tate indeed does not depend on the choice of a faithful representation $G\hookrightarrow GL_{n,\bQ_p}$ because any other representation of the reductive group $G$ can be embedded into a representation of the form $V^{\otimes a}\otimes V^{\vee\otimes b}$ where $V$ denotes the tautological representation of $GL_{n,\bQ_p}$.

\begin{example}\label{projective rep example}
According to this definition, for a $\obQ_p$-local system $\bL$ on $X$ the corresponding projective representation $\rho:\pi_1^{\et}(X,\ox)\to PGL_n(\obQ_p)$ is de Rham (resp. Hodge-Tate) if and only if its composition with the adjoint representation $PGL_{n,\obQ_p}\to GL_{n^2,\obQ_p}$ has the corresponding property. In terms of the local system $\bL$ this is equivalent to saying that the local system $\bL\otimes_{\obQ_p}\bL^{\vee}$ is de Rham (resp. Hodge-Tate).
\end{example}

We summarize in this section the setup and the properties of $p$-adic Simpson and Riemann-Hilbert correspondences of \cite{lz} and \cite{dllz}. From now on suppose that $X$ can be embedded into a smooth proper rigid-analytic variety $\oX$ such that the complement $D=\oX\setminus X$ is a normal crossings divisor. 

Let $K_n=K(\mu_{p^n})$ for $n\geq 0$ denote the $p$-cyclotomic extensions of $K$ and let $\Kinf=\bigcup\limits_{n\geq 1}K_n$ be their union with  $\hkinf$ denoting its completion. The Galois group $\Gal(\Kinf/K)$ is an open subgroup of $\bZ_p^{\times}$ which we denote by $\Gamma$. We will denote by $\bQ_p(1)=(\lim\limits_{n}\mu_{p^n}(\oK))[\ip]$ the cyclotomic character of $G_K$, sometimes viewed as a character of $\Gamma$. Denote also by $H_K$ the subgroup $\Gal(\oK/\Kinf)\subset G_K$. Evaluating Fontaine's construction of $B_{\dR}$ on the perfectoid field $\hkinf$ we get rings $L_{\dR}=B_{\dR}(\hkinf),L_{\dR}^+=B_{\dR}^+(\hkinf)$ equipped with the filtrations such that $L_{\dR}^+$ is complete. For the convenience of notation we fix a compatible system of roots of unity $\zeta_p,\zeta_{p^2},\ldots\in \hkinf$ and denote the corresponding element $\log[(1,\zeta_p,\zeta_{p^2},\ldots)]\in L_{\dR}^+$ by $t$. Denote also by $C$ the completed algebraic closure $\widehat{\overline{K}}$ of $K$.

Consider the ringed space $\overline{\cX}$ defined as $(\oX_{\hkinf},(\cO_{\oX}\widehat{\otimes_K}L_{\dR}^+)[t^{-1}])$. It is equipped with the sheaf of logarithmic differentials $\Omega^{1,\log}_{\overline{\cX}/L_{\dR}}=\Omega^{1,\log}_{\oX/K}\widehat{\otimes}_K L_{\dR}$ where $\Omega^{1,\log}_{\oX/K}$ is the sheaf of $1$-forms that are regular on $X$ and have at worst logarithmic poles along $D$. See \cite[Definitions 3.5, 3.6]{lz} and \cite[Subsection 3.1]{dllz} for complete definitions of these objects and for the notion of a filtered vector bundle with connection on $\overline{\cX}$.

Here is the excerpt from \cite{dllz} that we will need:

\begin{thm}\label{liu zhu rh}
There exists an exact functor $\bL\mapsto \cR\cH^{\log}(\bL)$ from $\bQ_p$-local systems on $X$ to $\Gamma$-equivariant filtered vector bundles on $\overline{\cX}$ with a logarithmic integrable connection that satisfies Griffiths transversality such that 


(i) Denoting by $\cH^{\log}(\bL)$ the 0th graded piece $\gr^0\cR\cH^{\log}(\bL)$ we have canonical $\Gamma$-equivariant isomorphisms $\gr^i\cR\cH^{\log}(\bL)\simeq \cH^{\log}(\bL)(i)$. The connection on $\cR\cH^{\log}(\bL)$ induces a $\Gamma$-equivariant Higgs field $$\theta:\cH^{\log}(\bL)\to\cH^{\log}(\bL)(-1)\otimes_{\cO_{\oX_{\hkinf}}}\Omega^{1,\log}_{\oX_{\hkinf}/\hkinf}$$

(ii) The restrictions $\bL\mapsto \cH^{\log}(\bL)|_{X_{\hkinf}}$ and $\bL\mapsto \cR\cH^{\log}(\bL)|_{\cX}$ are monoidal functors compatible with duality. 

(iii) If $\bL$ admits a $\bZ_p$-lattice then there is a $H_K$-equivariant isomorphism (with respect to the trivial action on $\End_{\oX_{\hkinf},\Higgs}\cH^{\log}(\bL)$) of $C$-algebras $$\End_{\oX_{\hkinf},\Higgs}\cH^{\log}(\bL)\otimes_{\hkinf}C\simeq \End_{X_{\oK}}\bL|_{X_{\oK}}\otimes_{\bQ_p}C.$$

(iv) We have $$\cR\cH^{\log}(\bL)^{\Gamma}|_X\simeq D_{\dR}(\bL) \qquad (\bigoplus\limits_{i\in\bZ}\cH^{\log}(\bL)(i))^{\Gamma}|_X\simeq D_{\HT}(\bL)$$

(v) For any classical point $x\in X$, choosing a geometric point $\ox:\Spa(\oK,\cO_{\oK})\to X$ above it, we get a $\Gal(\overline{k(x)}/k(x)(\mu_{p^{\infty}}))$-equivariant isomorphism (with respect to trivial action on $\cH^{\log}(\bL)_x$) $$\cH^{\log}(\bL)_x\otimes_{\widehat{k(x)(\mu_{p^{\infty}})}}C\simeq \bL_{\ox}\otimes_{\bQ_p}C$$

\end{thm}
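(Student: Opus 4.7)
The plan is to construct $\cR\cH^{\log}(\bL)$ on the pro-étale site of $\oX$, using logarithmic period sheaves, then decomplete along the cyclotomic tower to obtain a $\Gamma$-equivariant object over $\overline{\cX}$. I would build on the non-logarithmic construction in \cite{lz}, adding the log structure along $D=\oX\setminus X$ via toric/semistable charts as in \cite{dllz}, and then verify the listed properties one by one.

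First I would set up the logarithmic period sheaves $\cO\bB_{\dR}^{\log,+}$ and $\cO\bB_{\dR}^{\log}$ on the pro-étale site $\oX_{\proet}$, equipped with their filtration and logarithmic connection. For a $\bZ_p$-lattice $\bL^{\circ}$ inside $\bL$, the tensor product $\bL^{\circ}\otimes_{\widehat{\bZ}_p}\cO\bB_{\dR}^{\log,+}$ carries a filtration and integrable log connection, and locally on a chart $U\subset \oX$ admitting étale coordinates with toric boundary one obtains — after choosing a pro-finite-étale cover that trivializes $\bL$ over the generic fibre — a filtered, log-$\nabla$-stable free module over $\cO\bB_{\dR}^{\log,+}$ of the expected rank. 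Taking $R^0\nu_*$ along $\nu:\oX_{\proet}\to \oX_{\an}$ and glueing gives a filtered vector bundle with log connection on $\oX_{\hkinf}$ valued in $L_{\dR}^+$; inverting $t$ produces the sought-after object on $\overline{\cX}$. The $\Gamma$-equivariance is inherited from the $\Gamma$-action on $\hkinf$ and on $L_{\dR}^+$, and is what survives the decompletion of the $G_K$-action over $C$ down to $\hkinf$ — the decompletion step, carried out Sen-theoretically in \cite{kht} and \cite{dllz}, is where I expect the main technical difficulty, and is what makes the resulting bundle descend from $C$ to $\hkinf$.

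For property (i), the compatibility of the filtration on $\cO\bB_{\dR}^{\log,+}$ with Tate twists, together with the Faltings extension $0\to\cO_{\oX}(1)\to \gr^{-1}\cO\bB_{\dR}^{\log,+}\to \Omega^{1,\log}_{\oX}\to 0$, yields canonical identifications $\gr^i\cR\cH^{\log}(\bL)\simeq \cH^{\log}(\bL)(i)$, and reading off the connection in this graded picture gives the Higgs field $\theta$ with the claimed target $\cH^{\log}(\bL)(-1)\otimes\Omega^{1,\log}_{\oX_{\hkinf}/\hkinf}$. Property (ii) follows because the construction is functorial in $\bL$ and compatible with the tensor structure on pro-étale period sheaves; duality on the nose holds only away from $D$, which is why the monoidality and duality statement is restricted to $X_{\hkinf}$ and $\cX$.

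For (iii) I would argue as follows: the $C$-linear $p$-adic Simpson correspondence $\bL\otimes_{\bQ_p}\cO C\simeq \cH^{\log}(\bL)\otimes_{\cO_{\oX_{\hkinf}}}\cO C$ (with $\cO C$ the completed structure sheaf on the pro-étale site) is monoidal, so passing to global endomorphisms on $X_{\oK}$ and using that global sections of $\cO C$ on $X_{\oK,\proet}$ recover $C$ (essentially Scholze's primitive comparison, plus the $\bZ_p$-lattice hypothesis to control integrality) gives the desired $H_K$-equivariant algebra isomorphism. Property (iv) is a direct unwinding of definitions: $D_{\dR}(\bL)$ is by construction the $G_K$-invariants of $\bL\otimes \cO\bB_{\dR}^{\log}$ on $X$, which via the decompletion is the $\Gamma$-invariants of $\cR\cH^{\log}(\bL)|_X$, and similarly for $D_{\HT}$. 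Property (v) is the local version at a classical point: restricting $\cH^{\log}(\bL)$ to $x$ and tensoring up to $C$ reduces to the fibre-wise $p$-adic Simpson isomorphism, and the residual $\Gal(\overline{k(x)}/k(x)(\mu_{p^{\infty}}))$-equivariance is what remains after decompletion along the cyclotomic tower at $x$.

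The principal obstacle throughout is the decompletion from $C$-coefficients (where the correspondence is essentially formal once one has the period sheaves) to $\hkinf$-coefficients with a residual $\Gamma$-action; this requires the Sen-theoretic input of Shimizu \cite{kht} and its logarithmic generalization in \cite{dllz}, and it is also what forces the $\bZ_p$-lattice hypothesis in (iii) so that one can control continuity of the Galois action on integral models before inverting $p$.
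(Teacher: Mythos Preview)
Your sketch correctly identifies that parts (i), (ii), (iv), (v) are essentially citations from \cite{lz} and \cite{dllz}; the paper treats them the same way. The substantive content of the proof lies in part (iii), and here your argument has a genuine gap.

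You argue that the $C$-linear Simpson correspondence is monoidal and then ``pass to global endomorphisms''. The trouble is that the object whose endomorphisms appear on the left of (iii) is $\cH^{\log}(\bL)$ \emph{on the compactification} $\oX_{\hkinf}$, whereas the monoidality you invoke holds only on the open $X$. Concretely, the log Higgs functor $\cH^{\log}$ is only \emph{lax} monoidal on $\oX$: the natural map $\cH^{\log}(\bL)\otimes\cH^{\log}(\bL)^{\vee}\to\cH^{\log}(\bL\otimes\bL^{\vee})$ is in general not an isomorphism along $D$, because Deligne's canonical extension is normalized to have residues in $[0,1)$ and this interval is not closed under addition. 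So while the Hodge--Tate comparison \cite[Theorem~3.2.4(2)]{dllz} does give
\[
H^0_{\et}(X_{\oK},\bL\otimes\bL^{\vee})\otimes_{\bQ_p} C\ \simeq\ H^0_{\Higgs}\bigl(\oX_{\hkinf},\cH^{\log}(\bL\otimes\bL^{\vee})\bigr)\otimes_{\hkinf} C,
\]
the right-hand side is not a priori the same as $\End_{\oX_{\hkinf},\Higgs}\cH^{\log}(\bL)\otimes_{\hkinf} C$.

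The paper fills this gap by using the $\bZ_p$-lattice to construct a finite Galois cover $\pi:Y\to X$ (with group $G=GL_d(\bZ/p^d)$) on which $\pi^*\bL$ has \emph{unipotent} monodromy along the boundary; one then passes to a $G$-equivariant smooth compactification $\oY$ of $Y$ via functorial resolution of singularities. On $\oY$ the residues are zero, so $\cH^{\log}$ is genuinely monoidal there \cite[Theorem~3.2.12(1)]{dllz}, and a diagram chase comparing $G$-invariants upstairs with the objects downstairs yields (iii). This is also the real reason for the $\bZ_p$-lattice hypothesis --- it is what allows one to trivialize $\bL$ modulo $p^d$ by a finite \'etale cover --- not the continuity issue you cite; the paper in fact gives a counterexample (a rank~$1$ local system on a Tate curve without lattice) showing that (iii) fails in its absence, and the failure is a failure of the degree-$0$ cohomological comparison itself.

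A smaller point: the decompletion you describe as the ``principal obstacle'' is not part of this theorem. The functor $\cR\cH^{\log}$ already lands in $L_{\dR}$-coefficients over $\hkinf$ with $\Gamma$-action by construction in \cite{dllz}; the further Sen-theoretic decompletion to $\Kinf$-coefficients is carried out separately (Propositions~\ref{vb decompletion}--\ref{main decompleted rh}) and belongs to the next step, not this one.
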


\begin{proof}
Part (i) is \cite[Theorems 3.2.3 and 3.2.4]{dllz}, part (ii) is \cite[Theorem 2.1 (iv) and Theorem 3.8 (i)]{lz}, and part (v) is \cite[Theorem 2.1 (ii)]{lz}. Part (iv) is the definition of functors $D_{\dR}$ \cite[Section 3.2]{lz} and $D_{\HT}(\bL)$ \cite[Section 5]{kht} together with the fact that $\cR\cH^{\log}(\bL)|_X\simeq \cR\cH(\bL)$ and $\cH^{\log}(\bL)|_X\simeq \cH(\bL)$. 

The statement (iii) would be immediate from the Hodge-Tate comparison \cite[Theorem 3.2.4(ii)]{dllz} applied to $\bL\otimes_{\bQ_p}\bL^{\vee}$, if $\cH^{\log}$ was monoidal. Unfortunately, in general only a weaker statement hold:

\begin{pr}\label{monoidality rh}
The functor $\bL\mapsto \cH^{\log}(\bL)$ is lax monoidal: for any $\bL_1,\bL_2$ there exists a canonical map $\cH^{\log}(\bL_1)\otimes_{\cO_{\oX_{\hkinf}}}\cH^{\log}(\bL_2)\to\cH^{\log}(\bL_1\otimes_{\bQ_p}\bL_2)$. This map is an isomorphism if both $\bL_1$ and $\bL_2$ have unipotent (as opposed to merely quasi-unipotent) monodromy along $D$.
\end{pr}

\begin{proof}
The lax monoidality is immediate from the definition of $\cH^{\log}$ because the period sheaf $\cO\bC_{\log,\oX}$ is a sheaf of rings. The second statement is \cite[Theorem 3.2.12(1)]{dllz}
\end{proof}

This issue is not specific to the $p$-adic situation and comes down to the fact that, when defining $\cR\cH^{\log}$ and $\cH^{\log}$ via Deligne's canoncial extension, we choose the residues of the extension to lie in $[0,1)$, but the residues of the tensor product $\cR\cH^{\log}(\bL_1)\otimes_{\cO_{\overline{\cX}}}\cR\cH^{\log}(\bL_2)$ need not belong to $[0,1)$.

Nevertheless, we will prove that for any $\bZ_p$-local system $\bL$ on $X$ there is a canonical isomorphism $$H^0_{\Higgs}(\oX_{\hkinf},\cH^{\log}(\bL\otimes_{\bZ_p}\bL^{\vee}))\simeq H^0_{\Higgs}(\oX_{\hkinf},\cH^{\log}(\bL)\otimes_{\oX_{\hkinf}}\cH^{\log}(\bL)^{\vee})$$ which will yield the result by \cite[Theorem 3.2.4(ii)]{dllz}. We have a canonical map of Higgs bundles $\cH^{\log}(\bL\otimes_{\bZ_p}\bL^{\vee})\otimes_{\cO_{\oX_{\hkinf}}}\cH^{\log}(\bL)\to \cH^{\log}(\bL)$ that induces the map \begin{equation}\label{laxmonoidal}\alpha_{\bL}:\cH^{\log}(\bL\otimes_{\bZ_p}\bL^{\vee})\to\cH^{\log}(\bL)\otimes_{\cO_{X_{\hkinf}}}\cH^{\log}(\bL)^{\vee}\end{equation}

which is an isomorphism when restricted to $X_{\hkinf}\subset\oX_{\hkinf}$ and, in particular, is injective. We will prove that $\alpha_{\bL}$ induces an isomorphism on $H^0_{\Higgs}(\oX_{\hkinf},-)$ by passing to a finite etale cover to make the monodromy of $\bL$ around $D$ unipotent. Denote the rank of $\bL$ by $d$ and let $\pi: Y\to X$ be the finite $G:=GL_d(\bZ/p^d)$-Galois cover of $X$ that trivializes $\bL/p^{d}$. 

 Let $\pi':Y'\to \oX$ be the normalization of $\oX$ in $Y$. The action of $G$ extends on $Y'$ and $\pi'$ is a $G$-equivariant finite morphism. Applying functorial resolution of singularities \cite[Theorem 1.1.13(iii)]{temkin} we can find a diagram 
\[
\begin{tikzcd}
Y\arrow[r]\arrow[d,"\pi"] &\oY\arrow[d,"\opi"] \\
X\arrow[r] & \oX
\end{tikzcd}
\] 

\noindent such that $\oY$ is a proper smooth rigid-analytic variety, the complement $D'=\oY\setminus Y$ is a normal crossings divisor, and $G$-action extends from $Y$ to $\oY$ and $\opi$ is $G$-equivariant. Since every quasi-unipotent element of $1+p^d\Mat_d(\bZ_p)$ is unipotent, $\pi^*\bL$ has unipotent monodromy around $D'$. The maps $\alpha_{\bL}$ and $\alpha_{\pi^*\bL}$ induce the diagram 

\begin{equation}\label{unipotent comparison formula}
\begin{tikzcd}
\End_{X_{\oK}}(\bL)\otimes_{\bZ_p}C\arrow[r,"\sim"]\arrow[d,"\sim"] & H^0_{\Higgs}(\oX_{\hkinf},\cH^{\log}(\bL\otimes_{\bZ_p}\bL^{\vee}))\otimes_{\hkinf}C\arrow[r, hookrightarrow, "\alpha_{\bL}"]\arrow[d] & \End_{\oX_{\hkinf},\Higgs}\cH^{\log}(\bL)\otimes_{\hkinf}C\arrow[d, hookrightarrow] \\
\End_{Y_{\oK}}(\pi^*\bL)^G\otimes_{\bZ_p}C\arrow[r,"\sim"] & H^0_{\Higgs}(\oY_{\hkinf},\cH^{\log}(\pi^*(\bL\otimes_{\bZ_p}\bL^{\vee})))^G\otimes_{K_{\hkinf}}C \arrow{r}{\alpha_{\bL}}[swap]{\sim} & \End_{\oY_{\hkinf},\Higgs}\cH^{\log}(\pi^*\bL)^G\otimes_{\hkinf}C
\end{tikzcd}
\end{equation}

The first arrows in both rows are isomorphisms by \cite[Theorem 3.2.4(2)]{dllz}, the second one in the second row is by Theorem 3.2.12(1) of loc. cit. and the leftmost vertical one is an isomorphism because $\pi:Y_{\oK}\to X_{\oK}$ is an etale $G$-Galois cover. The second arrow in the first row and the rightmost vertical one are both injections, so the composition of the first row is an isomorphism, as desired.
\end{proof}

\begin{rem}
In part (iii) the condition on the existence of a $\bZ_p$-lattice is necessary. Consider a Tate elliptic curve $X=\oX=\bG_m/\langle \gamma\rangle$ defined by quotienting $\bG_{m,K}=(\Spec K[x,x^{-1}])^{\an}$ by the automorphism $\gamma$ defined as $\gamma^*(x)=px$. Define a rank $1$ local system $\bM$ on $X$ by taking the constant local system $\ubQp$ on $\bG_m$ and equipping it with the descent datum $\gamma^*\ubQp\xrightarrow{p}\ubQp$. Then $H^0(X_{\oK},\bM)=0$ while $H^0(X_{\hkinf},\cH(\bM))^{\theta=0}=H^0_{\proet}(X_{\hkinf},\bM\otimes_{\bQ_p}\widehat{\cO})=H^0_{\proet}(\bG_{m,\hkinf},\widehat{\cO})^{\gamma=p^{-1}}=\hkinf\cdot x^{-1}$. In particular, the comparison between etale and Higgs cohomology fails already for cohomology in degree $0$ and (iii) is false e.g. for $\bL=\ubQp\oplus\bM$.
\end{rem}

This construction translates the issue of proving that a local system $\bL$ is de Rham into a question about the action of $\Gamma$ on $\cR\cH^{\log}(\bL)$. We will analyze this action using Fontaine-Sen's technique of decompletion. 

\section{Decompleted Simpson and Riemann-Hilbert correspondences}
\newcommand{\Y}{Y}
\newcommand{\CY}{\cY}

The decompleted versions of $\cH(\bL)$ and $\cR\cH(\bL)$ we need have essentially been constructed by Koji Shimizu. Sections 3.2 and 4.1 of \cite{kht} construct those over affinoid bases where the bundle $\cH(\bL)$ is trivial. We check in this section that these constructions can be globalized to work over arbitrary smooth quasi-compact rigid-analytic varieties, the main result of this section is Proposition \ref{main decompleted rh}. 

We will start with a general discussion of decompletions in relative setting that will later be specialized to the cases of $\cH^{\log}(\bL)$ and $\cR\cH^{\log}(\bL)$. For the duration of this section let $\Y$ be a smooth quasi-compact rigid-analytic variety over $K$ which we will view as an adic space over $\Spa(K,\cO_K)$. Consider the sheaves $\cO_{{\Y}_{\Kinf}}:= \cO_{\Y}\otimes_K \Kinf$, $\cO_{{\Y}_{\Kinf}}[[t]]:=\lim\limits_{\leftarrow}\cO_{\Y}\otimes_K \Kinf[t]/t^n$, and $\cO_{{\Y}_{\Kinf}}{((t))}:=\cO_{{\Y}_{\Kinf}}{[[t]]}\otimes_{\Kinf[[t]]}\Kinf((t))$. Denote by ${\Y}_{\Kinf}$, $\Y_{\Kinf[[t]]}$, and ${\Y}_{\Kinf((t))}$ the ringed spaces $({\Y}, \cO_{{\Y}_{\Kinf}})$, $(\Y,\cO_{\Y_{\Kinf}}{[[t]]})$, and $({\Y}, \cO_{{\Y}_{\Kinf}}{((t))})$ respectively. We will call locally free sheaves of finite rank on these ringed spaces simply 'vector bundles'. Recall that ${\Y}_{\hkinf}$ is the base change ${\Y}\times_{\Spa(K,\cO_K)}\Spa(\hkinf,\cO_{\hkinf})$ and $\CY^+$, $\CY$ are the ringed spaces $({\Y}_{\hkinf},\cO_{\Y}\widehat{\otimes}_K L^+_{\dR})$ and $({\Y}_{\hkinf},\cO_{\Y}\widehat{\otimes}_K L_{\dR})$ respectively.

Note that it is not immediate that a vector bundle on some $\Y_{K_n}$ gives rise to a vector bundle on $\Y_{\Kinf}$ because the underlying topological spaces $|\Y_{K_n}|$ and $|\Y|$ are different. The next lemma, however, shows that one can always find a cover of $\Y$ that trivializes a vector bundle on $\Y_{K_n}$.

\begin{lm}\label{opens descent}
(i) For any quasi-compact open $U\subset \Y_{\hkinf}$ there exists $n\in\bN$ and an open $U_n\subset \Y_{K_n}$ such that $U=U_n\times_{K_n}\hkinf$.

(ii) If $E$ is a vector bundle on $\Y_{K_n}$ for some $n\in\bN$ then there exists a finite cover of $\Y=\bigcup\limits_i U_i$ such that the cover $\{U_{i,K_n}\}_{i\in I}$ of $\Y_{K_n}$ trivializes $E$.
\end{lm}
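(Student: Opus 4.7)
The plan is as follows. Both parts are local on $\Y$, so after choosing a finite admissible affinoid cover of $\Y$ I may assume $\Y=\Spa(A,A^+)$. Set $A_n:=A\otimes_K K_n$ and $A_\infty:=A\widehat{\otimes}_K \hkinf$, so that $\Y_{K_n}=\Spa(A_n,A_n^+)$ and $\Y_{\hkinf}=\Spa(A_\infty,A_\infty^+)$. The crucial algebraic input is that $\bigcup_n A_n$ is dense in $A_\infty$, reflecting the identity $\hkinf=\widehat{\bigcup_n K_n}$.

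For part (i), I will write a quasi-compact open $U\subset\Y_{\hkinf}$ as a finite union of rational subdomains, each defined by inequalities $|f_i|\leq|g|$ with $f_1,\ldots,f_k,g\in A_\infty$ generating the unit ideal. I then approximate each $f_i$ and $g$ by elements $f_i^{(n)},g^{(n)}\in A_n$ for $n$ large. The approximations still generate the unit ideal by the standard geometric-series argument, and close enough approximations define the ``same'' rational subdomain after base change to $\Spa(A_\infty,A_\infty^+)$, as one checks by tracking norms pointwise. Taking unions of finitely many such rational subdomains produces the desired $U_n\subset\Y_{K_n}$. Conceptually this is the principle that quasi-compact opens of cofiltered limits of spectral spaces descend to finite level, applied to $|\Y_{\hkinf}|\sim\varprojlim_n |\Y_{K_n}|$.

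For part (ii), let $E$ correspond to a finitely generated projective $A_n$-module $M$ and fix a classical point $y\in\Y$ with maximal ideal $\fm\subset A$. Since $K_n/K$ is finite separable, $k(\fm)\otimes_K K_n$ decomposes as a finite product of fields, and hence the localization $A_{n,\fm}=A_\fm\otimes_K K_n$ is a semilocal ring. Any finitely generated projective module over a semilocal ring is free, so $M_\fm$ admits a basis $e_1,\ldots,e_r$ over $A_{n,\fm}$. These basis elements are represented by sections of $M$ over $A[1/s]$ for some $s\in A\setminus\fm$, and the locus where they remain an $A_n[1/s]$-basis is open; choosing a rational neighborhood $U_y\ni y$ of $\Y$ inside this locus gives a trivialization of $E|_{U_{y,K_n}}$. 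Quasi-compactness of $\Y$ then produces the desired finite subcover.

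The main obstacle will be the approximation step in part (i): one must verify that a sufficiently close perturbation of the defining data $(f_i,g)$ of a rational subdomain preserves both unit-ideal generation and the subdomain itself after base change. This is notationally fiddly but is a standard technique in rigid/adic geometry for descending structure from perfectoid approximations. Part (ii) is by comparison elementary, being a direct combination of the semilocal structure of $A_{n,\fm}$ with the standard spreading-out of a basis from the stalk to a neighborhood.
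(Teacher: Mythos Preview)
Your approach to part (i) is essentially what the paper does: reduce to $\Y$ affinoid and $U$ a rational subdomain, then approximate the defining data from $A_\infty$ by elements of some $A_n$. The paper simply cites \cite[Lemma 2.1.3(ii)]{berkovich} for this step, whose proof is precisely the perturbation argument you outline.

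For part (ii) your route is genuinely different. The paper argues by induction on the maximal rank of $E$ on the connected components of $\Y_{K_n}$: for each point $x\in|\Y|$ it manufactures a section of $E$ that is nowhere vanishing near the fiber $\pi_n^{-1}(x)$ by combining local sections $s_i$ with separating functions $f_i$, then intersects the resulting open with its $\Gal(K_n/K)$-translates to force it to descend to $\Y$, and splits off a trivial summand to run the induction. Your argument is more direct: localizing $A_n$ at the multiplicative set $A\setminus\fm$ gives a semilocal ring (as $A_n$ is finite over the local ring $A_\fm$), over which the module is free, and then one spreads out. This avoids both the induction and the Galois-averaging step, and is arguably cleaner.

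One correction, however: the assertion ``any finitely generated projective module over a semilocal ring is free'' is false as stated---take $R=k\times k$ and $P=k\times 0$. What is true is that a finitely generated projective module of \emph{constant rank} over a semilocal ring is free (lift a basis from $P/JP$ using Nakayama and split the resulting surjection). So your argument, as written, only proves the lemma under the additional hypothesis that $E$ has constant rank on $\Y_{K_n}$. This is harmless for the paper's purposes, since in the application (Proposition \ref{vb decompletion}) the bundle $E_n$ arises from a $\Gamma$-equivariant bundle on $\Y_{\hkinf}$ and has constant rank. But you should either add this hypothesis or, if you want the general statement, note that one can first decompose $E$ according to idempotents of $A_n$ and treat each rank separately; in the genuinely non-constant-rank case the lemma as stated is actually false (take $\Y=\Spa(K',\cO_{K'})$ for a field $K'\supset K_n$, so $\Y_{K_n}$ is several points, and let $E$ have different ranks on them), so some care with the formulation is needed either way.

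A second small point: when you invoke quasi-compactness of $\Y$ to extract a finite subcover from the $U_y$, note that the $U_y$ a priori only cover the classical points of $|\Y|$. You should use that the Zariski-open loci $D(f_\fm)\subset\Spec A$ on which your basis persists cover all maximal ideals, hence (since affinoid algebras are Jacobson) all of $\Spec A$; finitely many of the $f_\fm$ then generate the unit ideal, and the associated rational cover $\{|f_{\fm_j}|\geq|f_{\fm_i}|\ \forall i\}$ of $\Spa(A,A^+)$ does the job.
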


\begin{proof}
(i) Since it is enough to prove the statement of each member of a finite cover $U=\bigcup\limits_{i\in I}V_i$ we may assume that $\Y$ is affinoid and $U$ is a rational subset. The claim is then given by \cite[Lemma 2.1.3(ii)]{berkovich}.

(ii) As $\Y$ is assumed to be quasi-compact, we may assume that it is moreover affinoid $\Y=\Spa(A,A^+)$ so $E$ is associated to a finite projective module $M$ over $A_{K_n}$. We will prove the statement by induction on the maximum of ranks of restrictions of $E$ to connected components of $\Y_{K_n}$ (we cannot assume that $\Y_{K_n}$ is connected even if it is given that $\Y$ is connected). It is enough to find a cover $\Y=\bigcup\limits_i U_i$ by affinoids $U_i=\Spa(A_i,A_i^+)$ such that on each $U_{i,K_n}$ there is a section $s:\cO_{U_{i,K_n}}\to E|_{U_{i,K_n}}$ that does not vanish at the points of connected components of $U_{i,K_n}$ where $E$ is not identically zero. The exact sequences $0\to\cO_{U_{i,K_n}}\xrightarrow{s} E|_{U_{i,K_n}}\to \coker(s)\to 0$ split automatically by Tate's acyclicity and we can find a trivializing cover for $\coker(s)$ by induction assumption. For a point $x\in |\Y|$ consider the finite set $\{x_1,\dots, x_r\}=\pi_n^{-1}(x)\subset |\Y_{K_n}|$ where $\pi_n:\Y_{K_n}\to \Y$ is the canonical projection. For each $x_i$ such that $E$ is not zero on the connected component containing $x_i$ we can find a section $s_i:A_{K_n}\to M$ that does not vanish on some $x_i\in V_i\subset \Y_{K_n}$ and a function $f_i\in A$ that does not vanish in the residue field $k(x_i)$ but vanishes in each $k(x_j)$ for $j\neq i$. If $E$ vanishes on the connected component containing $x_i$ we declare $s_i=0$. Let $\tV\subset \Y_{K_n}$ be union of the open subset where the section $\sum\limits_{i=1}^r f_is_i:A_{K_n}\to M$ does not vanish and all the connected components of $\Y_{K_n}$ where $E$ is zero. Take $V\subset\Y_{K_n}$ to be the intersection of its $\Gal(K_n/K)$-orbit. Then $V=\pi_n^{-1}(U)$ for some open $U\subset\Y$ and $E|_{U_{i,K_n}}$ admits a section with desired property.
\end{proof}

\begin{pr}\label{vb decompletion}
Given a $\Gamma$-equivariant vector bundle $E$ on $\Y_{\hkinf}$, there is a $\Gamma$-equivariant vector bundle $E^{\fin}$ on $\Y_{\Kinf}$ with an isomorphism $E^{\fin}_{\hkinf}\simeq E$. For an affinoid open $\Spa(A,A^+)=U\subset \Y$ the sections of $E^{\fin}$ are given by:
\begin{equation}\label{plain decompletion formula}
E^{\fin}(U)=\{s\in E(U_{\hkinf})\text{ such that the }A\text{-span of the }\Gamma\text{-orbit of }s\text{ is finitely generated over }A\}
\end{equation}

Moreover, there is an endomorphism $\phi:E^{\fin}\to E^{\fin}$ such that for any section $s\in E^{\fin}(U_{\Kinf})$ we have \begin{equation}\label{sen formula}\gamma(s)=\exp(\phi\cdot \log\chi_{\cycl}(\gamma))s\end{equation} for all $\gamma$ in some open subgroup $\Gamma_s$ of $\Gamma$. The association $E\mapsto (E^{\fin},\phi)$ is functorial with respect to morphisms of equivariant vector bundles on $\Y_{\hkinf}$ and is compatible with tensor products.
\end{pr}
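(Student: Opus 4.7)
The plan is to construct $E^{\fin}$ by specifying its sections via formula (\ref{plain decompletion formula}) and then verify locally that this defines a vector bundle by invoking the affinoid-local decompletion results of Shimizu. First, I would define $E^{\fin}$ as the presheaf on $\Y_{\Kinf}$ sending an affinoid open $U = \Spa(A,A^+)$ to the subset of $E(U_{\hkinf})$ consisting of sections $s$ whose $\Gamma$-orbit has $A$-linear span finitely generated over $A$. It is immediate from this formula that $E^{\fin}$ is a presheaf of $\cO_{\Y_{\Kinf}}$-modules; the sheaf property follows since finite generation and the $\Gamma$-action can be checked locally and since $E$ is itself a sheaf.

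Next, I would verify that $E^{\fin}$ is a vector bundle with $E^{\fin}\otimes_{\cO_{\Y_{\Kinf}}}\cO_{\Y_{\hkinf}}\simeq E$. This is local on $\Y$, so assume $\Y = \Spa(A,A^+)$ is affinoid. Since $E$ is a vector bundle on $\Y_{\hkinf}$, after refining it is trivial on some quasi-compact cover of $\Y_{\hkinf}$, which by Lemma \ref{opens descent}(i) may be arranged to come from a cover of $\Y$ itself (possibly after pulling back to some $\Y_{K_n}$). On each such trivializing open, the $\Gamma$-equivariant structure on $E$ is encoded by a continuous semilinear cocycle $\Gamma \to GL_d(A\widehat{\otimes}_K\hkinf)$, and the relative Sen-theoretic decompletion of \cite[Sections 3.2 and 4.1]{kht} provides a finite projective $A\otimes_K \Kinf$-module consisting precisely of the $\Gamma$-locally-finite vectors and whose completed base change recovers the original module. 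This module is tautologically identified with $E^{\fin}$ evaluated on the given open, so the construction glues automatically (the defining formula being manifestly local), yielding a vector bundle on $\Y_{\Kinf}$ with the desired completion.

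With local triviality in hand, the Sen endomorphism $\phi$ is constructed as follows: for any locally finite section $s$, the $A$-module generated by $\Gamma\cdot s$ is finite over $A$ and carries a continuous $p$-adic analytic action of $\Gamma$, so for $\gamma$ in a sufficiently small open subgroup $\Gamma_s\subset \Gamma$ one defines $\phi\cdot s := \log(\gamma)(s)/\log\chi_{\cycl}(\gamma)$; this is independent of $\gamma\in\Gamma_s$ and extends to an $\cO_{\Y_{\Kinf}}$-linear endomorphism satisfying (\ref{sen formula}). Functoriality with respect to morphisms of $\Gamma$-equivariant vector bundles is immediate since the locally-finiteness condition is preserved, and compatibility with tensor products follows because $s_1\otimes s_2$ is locally finite when $s_1,s_2$ are, and the resulting map $E_1^{\fin}\otimes_{\cO_{\Y_{\Kinf}}}E_2^{\fin}\to (E_1\otimes E_2)^{\fin}$ is an isomorphism after $\widehat{\otimes}\hkinf$ and hence an isomorphism by faithful flatness of $\hkinf$ over $\Kinf$ in the appropriate completed sense.

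The main obstacle is the globalization step: Shimizu's decompletion is stated over affinoid bases together with a trivialization of the ambient bundle, so the essential content is ensuring that the locally obtained $\Kinf$-decompletions patch coherently across affinoids where different trivializations are used. This is where Lemma \ref{opens descent} is decisive, since it guarantees that trivializing covers of $\Y_{\hkinf}$ descend to covers of $\Y$, making the presheaf formula (\ref{plain decompletion formula}) an intrinsic and global avatar of Shimizu's local construction.
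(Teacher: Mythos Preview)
Your approach is essentially the same as the paper's, but two steps are under-argued and deserve attention.

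First, the sheaf property for $E^{\fin}$ does not follow just from ``finite generation being checkable locally.'' Given a section $s\in E(U_{\hkinf})$ whose restrictions to a finite affinoid cover $U=\bigcup_i U_i$ are $\Gamma$-locally finite, you must show that the $A$-span of $\Gamma\cdot s$ is finitely generated over $A$. The paper does this by choosing, for each $i$, a finite set $\Gamma_i\subset\Gamma$ generating $\langle\Gamma\cdot s|_{U_i}\rangle_{A_i}$, taking the union $\{\gamma_1,\dots,\gamma_N\}$, and then using flatness of $A\to A_i$ (to identify $\langle\gamma_1 s,\dots,\gamma_N s\rangle_A\otimes_A A_i$ with the $A_i$-span of the restrictions) together with faithful flatness of $A\to\prod_i A_i$ to conclude $\langle\gamma_1 s,\dots,\gamma_N s\rangle_A=\langle\Gamma\cdot s\rangle_A$. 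This is short but not automatic; your one-line dismissal hides the actual content.

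Second, and more substantively, you invoke only part (i) of Lemma \ref{opens descent}, but part (ii) is what closes the argument. Part (i) descends a trivializing cover of $\Y_{\hkinf}$ to a cover of $\Y_{K_n}$ for some $n$, so $E^{\fin}$ becomes a locally free $\cO_{\Y_{\Kinf}}$-sheaf on the topological space $|\Y_{K_n}|$. But $E^{\fin}$ is supposed to be a vector bundle on $\Y_{\Kinf}$, whose underlying space is $|\Y|$, not $|\Y_{K_n}|$. The paper handles this by (non-canonically) descending $E^{\fin}$ to a bundle $E_n$ on $\Y_{K_n}$ and then applying Lemma \ref{opens descent}(ii) to $E_n$ to produce a trivializing cover by opens of $\Y$ itself. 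Your final paragraph correctly identifies Lemma \ref{opens descent} as decisive, but omits precisely this second half of its role.
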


\begin{proof}
Denote by $E^{\fin}_0$ the presheaf of $\cO_{\Y}\otimes_K \Kinf$-modules defined on affinoid opens in $\Y$ by (\ref{plain decompletion formula}).

First, let us prove that $E^{\fin}_0$ satisfies the sheaf condition: if $U=\bigcup\limits_{i\in I}U_i$ is an open covering of an affinoid by affinoids, we need to check that a section $s\in E(U_{\hkinf})$ lies in $E^{\fin}_0(U)$ if all its restrictions to $U_{i,{\Kinf}}$ lie in the corresponding submodules. We may assume that the covering has finitely many opens as $U$ is quasi-compact. For each open $U_i=\Spa(A_i,A_i^+)$ there is a finite subset $\Gamma_i\subset \Gamma$ such that the finite set $\Gamma_i\cdot s|_{U_i}$ generates the module $\langle\Gamma\cdot s|_{U_i}\rangle$ over $A_i$. Taking the union of these $\Gamma_i$ for all $i$ we obtain a finite set of elements $\gamma_1,\dots,\gamma_N\in \Gamma$ such that for all $i$ the inclusion $\langle \gamma_1(s)|_{U_i},\dots,\gamma_N(s)|_{U_i}\rangle_{A_i}\subset\langle \Gamma\cdot s|_{U_i}\rangle_{A_i}$ is equality. Since $E$ is, in particular, a coherent sheaf on $\Y_{\hkinf}$, we have $E(U_i)=E(U)\otimes_{A_{\hkinf}}A_{i,\hkinf}=E(U)\otimes_{A}A_i$. The map $A\to A_i$ is flat so for any set $I\subset E(U)$ we have $\langle x|x\in I\rangle_A\otimes_A A_i\simeq \langle x|_{U_i}|x\in I\rangle_{A_i}$ under the canonical map. Therefore $\langle \gamma_1(s),\dots,\gamma_N(s)\rangle_{A}\subset \langle\Gamma\cdot s\rangle_A$ becomes an equality after taking the tensor product with any $A_i$ over $A$. Since the map $A\to\prod\limits_{i\in I}A_i$ is faithfully flat, this inclusion is in fact equality, as desired.

By \cite[Theorem 2.9]{kht} for any affinoid open $U\subset \Y$ such that $E|_{U_{\hkinf}}$ is trivial as a vector bundle we have $E^{\fin}_0(U)\otimes_{\cO(U_{\Kinf})}{\cO(U_{\hkinf})}\simeq E(U_{\hkinf})$. By Lemma \ref{opens descent} (i) there exists $n$ and a finite cover of $\Y_{K_n}$ such that its base change to $\Y_{\hkinf}$ trivializes $E$ so $E^{\fin}_0$ defines a locally free sheaf of $\cO_{\Y_{\Kinf}}$-modules on ${\Y}_{K_n}$.  Possibly after increasing $n$, there is a vector bundle $E_n$ on ${\Y}_{K_n}$ such that $E_n\otimes_{K_n}\Kinf\simeq E_0^{\fin}$. By Lemma \ref{opens descent} (ii), the sheaf $E_n$ and therefore $E^{\fin}_0$ is locally free on $\Y$, as desired.

The local construction of the operator $\phi$ from  \cite[Proposition 2.20]{kht} then immediately globalizes because $\phi$ is uniquely characterized by (\ref{sen formula}). The last assertion about functoriality and tensor compatibility follows from the base change functor from vector bundles on $\Y_{\Kinf}$ to $\Y_{\hkinf}$ being faithful and conservative.
\end{proof}

\begin{definition}
A filtration on a vector bundle $E$ on $\Y_{\Kinf((t))}$ is a collection $\{F^iE\}_{i\in\bZ}$ of locally free $\cO_{\Y_{\Kinf}}[[t]]$-subsheaves $F^iE\subset E$ such that $F^{i+1}E\subset F^iE$ and $t\cdot F^iE=F^{i+1}E$ for all $i$. A filtered vector bundle with integrable connection on $\Y_{\Kinf((t))}$ is a filtered vector bundles $(E, \{F^iE\}_{i\in\bZ})$ as above together with an integrable $\Kinf((t))$-linear connection $\nabla:E\to E\otimes_{\cO_{\Y_{\Kinf}}((t))}\Omega^1_{\Y_{\Kinf}/\Kinf}((t))$ such that $\nabla(F^iE)\subset F^{i-1}E\otimes_{\cO_{\Y_{\Kinf}}[[t]]}\Omega^1_{\Y_{\Kinf}/\Kinf}[[t]]$. 
\end{definition}

Note that given a vector bundle $E$ on $\Y_{\Kinf[[t]]}$ we can attach to it a vector bundle on $\CY^+$ as follows. Suppose that $\Y=\bigcup\limits_{i\in I} U_i$ is a trivializing cover for $E$ and define a vector bundle $E_{L_{\dR}^+}$ on $\CY^+$ trivialized by the cover $\bigcup\limits_{i\in I}|U_{i,\hkinf}|=|\CY^+|$ using the gluing data obtained from that for $E$ applying the maps $\cO_{U_{ij}}[[t]]\to \cO_{U_{ij}}\widehat{\otimes}_K L_{\dR}^+$.

\begin{pr}\label{de rham decompletion}
Suppose that $E$ is a $\Gamma$-equivariant vector bundle on $\CY^{+}$. Then there is a $\Gamma$-equivariant vector bundle $E^{\fin}$ on $\Y_{\Kinf[[t]]}$ together with an isomorphism $E^{\fin}_{L_{\dR}^+}\simeq E$. For every $n$ the sections of $E^{\fin}/t^n$ on an affinoid open $\Spa(A,A^+)=U\subset \Y$ are given by:
\begin{equation}\label{de rham decompletion formula}
(E^{\fin}/t^n)(U)=\{s\in E(\cU^+_{L_{\dR}^+/t^n})\text{ such that the }A\text{-span of the }\Gamma\text{-orbit of }s\text{ is finitely generated over }A\}
\end{equation}

Moreover, there is an endomorphism $\phi:E^{\fin}\to E^{\fin}$ that is $\cO_{\Y_{\Kinf}}$-linear and satisfies $\phi(ts)=t\phi(s)+ts$ for any local section $s\in E^{\fin}(U)$. For any $n$ and $s\in E^{\fin}(U)$ \begin{equation}\label{de rham sen formula}\gamma(s)=\exp(\phi\cdot \log\chi_{\cycl}(\gamma))s\end{equation} for all $\gamma$ in some open subgroup $\Gamma_{n,s}$ of $\Gamma$. The association $E\mapsto (E^{\fin},\phi)$ is functorial with respect to morphisms of equivariant vector bundles on $\CY^+$ and is compatible with tensor products.
\end{pr}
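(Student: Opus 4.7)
The plan is to mimic the proof of Proposition \ref{vb decompletion} while carefully tracking the $t$-adic filtration: first decomplete modulo each power of $t$, then pass to the inverse limit. The key local input will be Shimizu's $t$-adic decompletion theorem for $\Gamma$-equivariant $L_{\dR}^+$-modules developed in \cite[Section 4.1]{kht}, which is the $L_{\dR}^+$-analogue of the decompletion result \cite[Theorem 2.9]{kht} used in Proposition \ref{vb decompletion}.

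First I would fix $n \geq 1$ and define the presheaf $E^{\fin}/t^n$ on affinoid opens of $\Y$ by the formula (\ref{de rham decompletion formula}). The sheaf property follows word for word by the faithful-flatness argument already given for Proposition \ref{vb decompletion}, since the condition of being finitely $A$-generated descends along any faithfully flat covering $A \to \prod_i A_i$. Stability under multiplication by $t$ is a short check: since $\gamma(ts) = \chi_{\cycl}(\gamma) \cdot t \gamma(s)$, any generating set $\gamma_1(s),\ldots,\gamma_N(s)$ of $\langle \Gamma \cdot s \rangle_A$ gives a generating set $t\gamma_1(s),\ldots,t\gamma_N(s)$ of $\langle \Gamma \cdot (ts) \rangle_A$, so $E^{\fin}/t^n$ is a presheaf of $\cO_{\Y_{\Kinf}}[t]/t^n$-modules. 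Shimizu's local decompletion result on an affinoid $U = \Spa(A,A^+)$ trivializing $E/t^n$ gives that $(E^{\fin}/t^n)(U)$ is free of rank $d = \rk E$ over $A_{\Kinf}[t]/t^n$, with base change along $A_{\Kinf}[t]/t^n \to A \widehat{\otimes}_K L_{\dR}^+/t^n$ recovering $(E/t^n)(\cU^+_{L_{\dR}^+/t^n})$. Combined with Lemma \ref{opens descent} applied to $E/t^n$ viewed as a $\Gamma$-equivariant locally free $\cO_{\Y_{\hkinf}}$-module of rank $nd$, this globalizes to give $E^{\fin}/t^n$ as a locally free $\cO_{\Y_{\Kinf}}[t]/t^n$-module of rank $d$, together with the desired base change identification.

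The transition maps $E^{\fin}/t^{n+1} \to E^{\fin}/t^n$ are surjective (locally they reduce to the Shimizu-decomposition of a free module), so $E^{\fin} := \varprojlim_n E^{\fin}/t^n$ is a locally free $\cO_{\Y_{\Kinf[[t]]}}$-module of rank $d$ with $E^{\fin}_{L_{\dR}^+} \simeq E$. The Sen operators $\phi_n$ on each $E^{\fin}/t^n$ provided by \cite[Proposition 2.20]{kht} are compatible with the transition maps and assemble into an operator $\phi$ on $E^{\fin}$; the identity $\phi(ts) = t\phi(s) + ts$ follows by expanding $\gamma(ts) = \chi_{\cycl}(\gamma) t \gamma(s) = \exp\bigl((1+\phi)\log\chi_{\cycl}(\gamma)\bigr)(ts)$ to first order in $\log\chi_{\cycl}(\gamma)$, which captures both the derivative of $\chi_{\cycl}$ (the $+ts$) and the action of $\phi$ on $s$ (the $t\phi(s)$). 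The same expansion explains why $\phi$ is $\cO_{\Y_{\Kinf}}$-linear but only satisfies $[\phi, t] = t$ rather than being $\cO_{\Y_{\Kinf}}[t]/t^n$-linear. Functoriality and tensor compatibility transfer from the corresponding statements in Proposition \ref{vb decompletion} at each finite level. The main obstacle here is the local $t$-adic decompletion itself, but this is precisely the content of Shimizu's $B_{\dR}^+$-theory in \cite[Section 4.1]{kht}; once that input is imported, the rest of the argument is a direct adaptation of the globalization scheme used in Proposition \ref{vb decompletion}.
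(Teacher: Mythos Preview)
Your proposal is correct and follows the same strategy as the paper: decomplete modulo each power of $t$ using Shimizu's local results (the paper cites \cite[Propositions 2.11 and 2.24]{kht} rather than Section 4.1 and Proposition 2.20), globalize via Lemma \ref{opens descent}, and take the inverse limit. The only difference is in verifying the sheaf property of $E^{\fin}/t^n$: the paper argues by induction on $n$ via the left-exact sequence $0\to E_{n-1}^{\fin}\xrightarrow{t}E_n^{\fin}\to (E/t)^{\fin}$, whereas your observation that $E/t^n$ is a rank-$nd$ vector bundle on $\Y_{\hkinf}$ lets the faithful-flatness argument of Proposition \ref{vb decompletion} apply verbatim --- both arguments are valid.
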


\begin{proof}
Define the presheaf of $\Kinf[t]/t^n$-module on affinoid open subspaces of $\Y$ by $$E^{\fin}_n(U)=\{s\in E(\cU^+_{L_{\dR}^+/t^n})\text{ such that the }A\text{-span of }\Gamma\text{-orbit of }s\text{ is finitely generated over }A\}.$$ There is a left exact sequence of presheaves $0\to E_{n-1}^{\fin}\xrightarrow{t}E_{n}^{\fin}\to (E/t)^{\fin}$. We will prove by induction on $n$ that $E_n^{\fin}$ is a sheaf, the base case $n=1$ being covered by Proposition \ref{vb decompletion}. Suppose that $U=\bigcup\limits_{i\in I}U_i$ is a finite cover of an affinoid by affinoids and a section $s\in E(\cU^+_{L_{\dR}^+/t^n})$ is such that every restriction $s|_{U_i}$ lies in $E_n^{\fin}(U_i)$. We want to conclude that $s$ lies in $E^{\fin}_n(U)$. Denote by $\os$ the image of $s$ in $(E/t^{n-1})(U_{\hkinf})$. By the inductive assumption, $E_{n-1}^{\fin}$ is a sheaf so there is a finite set of elements $\gamma_1,\dots, \gamma_N\in\Gamma$ such that $\gamma_1\cdot \os,\dots\gamma_N\cdot \os$ span $\langle\Gamma\cdot\os\rangle_{A}$ inside $E(\cU_{L_{\dR}^+/t^{n-1}})$. This automatically implies that $\gamma_1\cdot s,\dots\gamma_N\cdot s,t\gamma_1(s),\dots t\gamma_N(s)$ span $\langle \Gamma\cdot s\rangle_A$ inside $E(\cU^+_{L_{\dR}^+/t^n})$. Therefore, $E^{\fin}_n$ is a sheaf.

Next, by \cite[Proposition 2.11]{kht} for any affinoid $U\subset X$ such that $E(\cU^+_{L_{\dR}^+/t^n})$ is a free $A\widehat{\otimes}_{K}L_{\dR}^+/t^n$-module the subspace $E_{n}^{\fin}(U)$ is a free $\Kinf[t]/t^n$-module of the same rank and the canonical map $E_{n}^{\fin}(U)\widehat{\otimes}_{\Kinf[t]/t^n}L_{\dR}^+/t^n\simeq E(\cU^+_{L_{\dR}^+/t^n})$ is an isomorphism.

Finally, there is a cover of $\Y_{\hkinf}$ by affinoids that trivializes $E$, it descends to some $K_m$ by Lemma \ref{opens descent}(i) so $E_n^{\fin}$ defines a locally free $\cO_{\Y_{\Kinf}}{[t]/t^n}$-module on $\Y_{K_m}$. After increasing $m$ this module may be non-canonically descended to a locally free sheaf of $\cO_{\Y_{K_m}}[t]/t^n$-modules on $\Y_{K_m}$ and Lemma \ref{opens descent}(ii) provides a cover of $\Y$ whose base change to $K_m$ trivializes this locally free sheaf. This cover therefore also trivializes the $\cO_{\Y_{\Kinf}}[t]/t^n$-module $E_n^{\fin}$ and we define the desired $E^{\fin}$ as $\lim\limits_n E_n^{\fin}$. \cite[Proposition 2.24]{kht} gives us the desired Sen endomorphism $\phi$ over the trivializing cover and it glues to a global endomorphism because of the characterizing property (\ref{de rham sen formula}).
\end{proof}

Now let us put ourselves in the setting of Section \ref{section2}: $\oX$ is a geometrically connected smooth proper rigid-analytic variety over $K$ and $X\subset \oX$ is an open subvariety such that $D=\oX\setminus X$ is a normal crossings divisor. Applying these decompletion results to $\cR\cH^{\log}(\bL)$ for a local system $\bL$ on $X$ gives

\begin{pr}\label{main decompleted rh}
To every $\bQ_p$-local system $\bL$ on $X$ we can assign a filtered vector bundles with integrable connection $\RH^{\log}(\bL)$ on $\oX_{\Kinf((t))}$ that comes equipped with a $\Kinf$-linear endomorphism $\phi$ preserving the filtration and commuting with the connection.

(i) Denote the graded piece of the filtration $\gr^0\RH^{\log}(\bL)$ by $\Hig^{\log}(\bL)$. All the other graded pieces $\gr^i\RH^{\log}(\bL)$ are then canonically identified with $\Hig^{\log}(\bL)$ as well. $\Hig^{\log}(\bL)$ is a vector bundle of rank $\rk_{\bQ_p}\bL$ on $\oX_{\Kinf}$ and connection induces a Higgs field $\theta:\Hig^{\log}(\bL)\to\Hig^{\log}(\bL)\otimes\Omega^1_{\oX_{\Kinf}/\Kinf}$. The endomorphism $\phi$ defines an endomorphism $\phi$ of the vector bundle $\Hig^{\log}(\bL)$ such that $\theta\circ \phi=(\phi-1)\circ \theta$.

(ii) The characteristic polynomial $\det(T\cdot \Id_{\Hig^{\log}(\bL)}-\phi)$ has coefficients in $K$. 

(iii) If $\bL$ admits a $\bZ_p$-lattice then there is an $H_K$-equivariant isomorphism of $C$-algebras $$\End_{X_{\oK}}({\bL})\otimes_{\bQ_p}C\simeq \End_{\oX_{\Kinf},\Higgs}(\Hig^{\log}(\bL))\otimes_{\Kinf}C.$$
\end{pr}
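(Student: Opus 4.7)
The plan is to obtain $\RH^{\log}(\bL)$ by applying the decompletion machinery of Propositions \ref{vb decompletion} and \ref{de rham decompletion} to the $\Gamma$-equivariant objects produced by the Riemann--Hilbert correspondence of Theorem \ref{liu zhu rh}. The key observation is that a filtered vector bundle on $\oX_{\Kinf((t))}$, in the sense introduced before Proposition \ref{de rham decompletion}, is determined by its $\cO_{\oX_{\Kinf}}[[t]]$-lattice $F^0$, because multiplication by $t$ identifies $F^{i+1}$ with $F^{i}$. Thus it suffices to apply Proposition \ref{de rham decompletion} to the $\Gamma$-equivariant vector bundle $F^0\cR\cH^{\log}(\bL)$ on $\overline{\cX}^{+}$, producing a bundle $F^0\RH^{\log}(\bL)$ on $\oX_{\Kinf[[t]]}$ with a Sen endomorphism $\phi$; I then set $\RH^{\log}(\bL) = F^0\RH^{\log}(\bL)[t^{-1}]$ with filtration $F^i = t^i F^0\RH^{\log}(\bL)$. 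The integrable $\Gamma$-equivariant connection on $\cR\cH^{\log}(\bL)$ preserves $\Gamma$-finite sections by the characterization (\ref{de rham decompletion formula}), so it descends to a connection on $\RH^{\log}(\bL)$ satisfying Griffiths transversality, and commutes with $\phi$ by functoriality of decompletion.

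To verify (i), I would apply Proposition \ref{vb decompletion} to the $\Gamma$-equivariant vector bundle $\cH^{\log}(\bL) = \gr^0\cR\cH^{\log}(\bL)$ on $\oX_{\hkinf}$, producing $\Hig^{\log}(\bL)$ on $\oX_{\Kinf}$ with its own Sen operator. Since both decompletion formulas extract $\Gamma$-finite sections in the same way, the canonical map $F^0\RH^{\log}(\bL)/tF^0\RH^{\log}(\bL) \to \Hig^{\log}(\bL)$ is an isomorphism, realizing $\Hig^{\log}(\bL)$ as $\gr^0\RH^{\log}(\bL)$; multiplication by $t^i$ then identifies every other graded piece with it. The induced Higgs field $\theta$ sends $\gr^0$ into $\gr^{-1}\otimes\Omega^1$ by Griffiths transversality, and under the identification $\gr^{-1}\simeq \Hig^{\log}(\bL)$ via $t^{-1}$, the identity $\phi(ts) = t\phi(s) + ts$ from Proposition \ref{de rham decompletion} rearranges to $\phi(t^{-1}s) = t^{-1}(\phi-1)s$, producing the relation $\theta\circ\phi = (\phi-1)\circ\theta$. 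The rank assertion follows because decompletion preserves rank and $\cH^{\log}(\bL)$ already has rank $\rk_{\bQ_p}\bL$ by Theorem \ref{liu zhu rh}(v).

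For (ii), the Sen operator $\phi$ is by formula (\ref{sen formula}) the infinitesimal generator of the $\Gamma$-action and hence commutes with $\Gamma$; its characteristic polynomial is therefore a $\Gamma$-invariant global section of $\cO_{\oX_{\Kinf}}[T]$, and since $\oX$ is smooth proper and geometrically connected over $K$ one has $H^0(\oX_{\Kinf},\cO)^{\Gamma} = \Kinf^{\Gamma} = K$. For (iii), the functor $E\mapsto E^{\fin}$ of Proposition \ref{vb decompletion} provides an isomorphism $\End(\Hig^{\log}(\bL))\otimes_{\Kinf}\hkinf \simeq \End(\cH^{\log}(\bL))$; since the Higgs condition is the kernel of the commutator map $\psi\mapsto \theta\psi-\psi\theta$ and flat base change commutes with kernels, this restricts to $\End_{\oX_{\Kinf},\Higgs}(\Hig^{\log}(\bL))\otimes_{\Kinf}\hkinf \simeq \End_{\oX_{\hkinf},\Higgs}(\cH^{\log}(\bL))$. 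Base changing further to $C$ and composing with Theorem \ref{liu zhu rh}(iii) yields the desired $H_K$-equivariant isomorphism. I expect the main obstacle to be verifying that all structures---the lattice $F^0$, the connection, the induced Higgs field, and the Sen operator---transport coherently through the two decompletion procedures and produce consistent sign conventions in $\theta\circ\phi = (\phi-1)\circ\theta$; modulo this bookkeeping the statement reduces cleanly to the previous propositions.
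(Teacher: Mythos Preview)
Your proposal is correct and follows essentially the same approach as the paper: apply Proposition \ref{de rham decompletion} to $\Fil^0\cR\cH^{\log}(\bL)$ and invert $t$, deduce (i) from the twist relation $\phi(ts)=t\phi(s)+ts$, deduce (ii) from $\Gamma$-equivariance of $\phi$ and properness of $\oX$, and deduce (iii) from Theorem \ref{liu zhu rh}(iii) together with the compatibility of decompletion with global sections. The only cosmetic difference is that for (iii) the paper phrases the key step as the identity $H^0(\oX_{\Kinf}, E^{\fin}) = H^0(\oX_{\hkinf}, E)^{\fin}$, which is equivalent to your flat-base-change formulation.
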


\begin{proof}
Start with $\cR\cH^{\log}(\bL)$ provided by Theorem \ref{liu zhu rh} and apply Proposition \ref{de rham decompletion} to $E=\Fil^0\cR\cH^{\log}(\bL)$ with $Y=\oX$. Define $\RH^{\log}(\bL)$ as $E^{\fin}[1/t]$. By construction, its $\cO_{X}\otimes_K L_{\dR}$-rank is equal to $\rk_{\bQ_p}\bL$ and is equipped with an endomorphism $\phi$ related to the action of $\Gamma$ on $\cR\cH^{\log}(\bL)$ via (\ref{de rham decompletion formula}).

For part (i), the identification $\gr^i\cR\cH^{\log}(\bL)\simeq \cH^{\log}(\bL)(i)$ induces $\gr^i\RH^{\log}(\bL)\simeq \Hig^{\log}(\bL)$ where we use our choice of $t$ to identify $\cH^{\log}(\bL)(i)$ with $\cH^{\log}(\bL)$. The Sen endomorphism  on $\gr^{-1}\RH^{\log}(\bL)$ is given by $\phi-1$ in terms of this identification where $\phi$ is the Sen endomophism on $\Hig^{\log}(\bL)$. This gives the equality $\theta\circ \phi=(\phi-1)\circ \theta$ of maps from $\Hig^{\log}(\bL)$ to $\Hig^{\log}(\bL)\otimes_{\cO_{\oX_{\Kinf}}}\Omega^1_{\oX_{\Kinf}/\Kinf}$. 

In part (ii), the fact the coefficients of the characteristic polynomial of $\phi$ are constant (that is, lie in $\oK$) across $\oX$ is \cite[Theorem 4.8]{kht}. It is, in fact, a purely local statement and is a consequence of the fact that the Higgs bundle $\Hig^{\log}(\bL)$ together with the endomorphism $\phi$ appears as a graded piece of a vector bundle with an integrable connection, but in our setting of a proper $\oX$ it also follows immediately from all global functions being constant $H^0(\oX_{\Kinf},\cO_{X_{\Kinf}})=\Kinf$ (recall that $X$ is assumed to be geometrically connected, otherwise the statement (ii) is false as the example of $X=\Spa(K',\cO_{K'})$ for a finite extension $K'\supset K$ shows).

It remains to prove that the coefficients of $\det(T\cdot \Id_{\Hig^{\log}(\bL)}-\phi)$ in fact lie in $K$. If $X$ has a $K$-point, this follows from \cite[Theorem 5]{sen} applied to the stalk of $\bL$ at that point. To cover the case of $X$ not admitting a rational point, we can repeat the proof of Sen's result in our relative setting: the Sen endomorphism $\phi$ induces an $\hkinf$-linear endomorphism of $\cH^{\log}(\bL)$ that commutes with the semi-linear $\Gamma$-action on that vector bundle. Therefore the coefficients of the characteristic polynomial of $\phi$ lie in $H^0(\oX_{\hkinf},\cO_{\oX_{\hkinf}})^{\Gamma}=K$.

Part (iii) follows from Theorem \ref{liu zhu rh} (iii) together with the observation that $H^0(\oX_{\Kinf}, E^{\fin})=H^0(\oX_{\hkinf},E)^{\fin}$ for any $\Gamma$-equivariant vector bundle $E$ on $\oX$. 
\end{proof}
We will call the roots of the characteristic polynomial $\det(T\cdot \Id_{\Hig^{\log}(\bL)}-\phi)$ viewed as elemens of $\oK$ the {\it Hodge-Tate weights} of $\bL$. 

\begin{lm}\label{senoperator criteria}
(i) A local system $\bL$ is de Rham if and only if $\RH^{\log}(\bL)^{\phi=0}$ is a vector bundle of rank $\rk_{\bQ_p}\bL$ on $\oX_{\Kinf}$.

(ii) A local system $\bL$ is Hodge-Tate if and only if $\phi$ is a semi-simple endomorphism of $\Hig^{\log}(\bL)$ and Hodge-Tate weights of $\bL$ lie in $\bZ$.
\end{lm}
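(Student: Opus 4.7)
The plan is to translate the eigenspace structure of the Sen endomorphism $\phi$ into the $\Gamma$-invariants of $\cR\cH^{\log}(\bL)$ computed by Theorem \ref{liu zhu rh}(iv), using the characterizing formula \eqref{de rham sen formula}. The crucial observation is: on $\RH^{\log}(\bL)$, a section $s$ satisfies $\phi(s) = 0$ if and only if it is fixed by some open subgroup of $\Gamma$. One direction is immediate from \eqref{de rham sen formula}; for the other, differentiating \eqref{de rham sen formula} with respect to $\gamma$ along a one-parameter subgroup of $\Gamma$ shows that $\gamma(s) = s$ on an open subgroup forces $\phi(s) = 0$. More generally, $\phi(s) = \lambda s$ corresponds to $s$ transforming via the character $\gamma \mapsto \chi_{\cycl}(\gamma)^{\lambda}$ on an open subgroup.

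For part (i), this observation combined with Galois descent identifies $\RH^{\log}(\bL)^{\phi = 0}$ as the $\Gamma$-finite, $\Gamma$-invariant subsheaf of $\cR\cH^{\log}(\bL)$: on any quasi-compact open its sections are fixed by some $\Gamma_n$, so the $\Gamma$-action factors through the finite quotient $\Gamma/\Gamma_n \simeq \Gal(K_n/K)$, and Galois descent provides a $K$-vector bundle whose base change to $\Kinf$ recovers $\RH^{\log}(\bL)^{\phi=0}$. By Theorem \ref{liu zhu rh}(iv) this $K$-bundle restricts on $X$ to $D_{\dR}(\bL)$. Since $\RH^{\log}(\bL)$ has rank $d := \rk_{\bQ_p} \bL$ over $\cO_{\oX_{\Kinf}((t))}$, the $\Kinf$-rank of $\RH^{\log}(\bL)^{\phi = 0}$ is at most $d$, with equality exactly when $\rk_K D_{\dR}(\bL) = d$, i.e.\ iff $\bL$ is de Rham.

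For part (ii) I would apply the same strategy summand by summand to $D_{\HT}(\bL) = \bigl(\bigoplus_{i \in \bZ} \cH^{\log}(\bL)(i)\bigr)^{\Gamma}|_X$. Tate twists shift the Sen operator, so the decompleted version of $\cH^{\log}(\bL)(i)$ carries Sen operator $\phi + i$, where $\phi$ is the Sen operator on $\Hig^{\log}(\bL) = \gr^0 \RH^{\log}(\bL)$. Hence the $i$-th summand of $D_{\HT}(\bL) \otimes_K \Kinf$ is identified with the genuine (not generalized) eigenspace $\Hig^{\log}(\bL)^{\phi = -i}$, and $\bL$ is Hodge-Tate iff $\sum_{i \in \bZ} \rk_{\Kinf} \Hig^{\log}(\bL)^{\phi = -i} = d = \rk \Hig^{\log}(\bL)$. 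Since the left-hand side is a sum of actual eigenspaces, this equality is equivalent to $\phi$ being semi-simple with integer eigenvalues.

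The main obstacle is making the rank comparison rigorous on all of $\oX_{\Kinf}$ rather than merely on $X_{\Kinf}$: one needs $\RH^{\log}(\bL)^{\phi = 0}$ and the eigenspaces $\Hig^{\log}(\bL)^{\phi = -i}$ to be genuine vector bundles of the asserted rank on the log completion, not just coherent subsheaves that might degenerate along $D$. This is controlled by the fact that $\cR\cH^{\log}(\bL)$ is built from Deligne's canonical log extension with residues in $[0,1)$, so the operations of taking $\Gamma$-finite parts and $\phi$-eigenspaces preserve local freeness across the normal crossings divisor $D$. A subsidiary check is that the Galois descent identification is compatible with these log-canonical extensions, which follows because the extension procedure commutes with base change along the finite Galois cover $\Spec K_n \to \Spec K$.
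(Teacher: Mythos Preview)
Your approach is essentially the same as the paper's: both exploit the Sen formula \eqref{de rham sen formula} to identify $\RH^{\log}(\bL)^{\phi=0}$ with sections fixed by an open subgroup of $\Gamma$, and for (ii) both reduce to the eigenspace decomposition of $\phi$ on $\Hig^{\log}(\bL)$.

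There are two points where the paper proceeds more cleanly than your sketch. First, for part (ii) the paper does not rederive the eigenspace criterion but simply cites \cite[Theorem 5.5]{kht}, after the one-line remark that semisimplicity and integrality of eigenvalues can be checked after restriction to the open $X$ (since $\Hig^{\log}(\bL)$ is locally free and $\phi$ has constant characteristic polynomial). This completely sidesteps your ``main obstacle'' about eigenspaces degenerating along $D$: there is no need to control anything on $\oX\setminus X$ for (ii).

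Second, your handling of the converse in (i) is where the real content lies, and your resolution via ``residues in $[0,1)$'' is not the right mechanism---that condition governs the failure of monoidality of $\cH^{\log}$, not local freeness of $\phi$-kernels. The paper instead invokes \cite[Theorem 3.2.7(2)]{dllz}, which says directly that when $\bL$ is de Rham the sheaf $D_{\dR,\log}(\bL)=\cR\cH^{\log}(\bL)^{\Gamma}$ is locally free of full rank on all of $\oX$. One then checks that the injection $D_{\dR,\log}(\bL)\otimes_K\Kinf\hookrightarrow\RH^{\log}(\bL)^{\phi=0}$ is an isomorphism, which follows because after completion it becomes the isomorphism $D_{\dR,\log}(\bL)\widehat{\otimes}_K L_{\dR}\simeq\cR\cH^{\log}(\bL)$. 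Your Galois-descent formulation amounts to the same thing but does not by itself supply local freeness on $\oX$; you need the DLLZ input.
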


\begin{proof}
(i) By definition, a local system is de Rham if and only if the invariants $D_{\dR,\log}(\bL):=\cR\cH^{\log}(\bL)^{\Gamma}$ form a vector bundle on $X$ of rank $\rk_{\bQ_p}\bL$. Suppose that $\RH^{\log}(\bL)^{\phi=0}$ is a vector bundle of rank $\rk_{\bQ_p}\bL$ on $X_{\Kinf}$. Therefore there is a finite cover $\oX=\bigcup\limits_i U_i$ such that for each $U_i$ the kernel $\RH^{\log}(\bL)(U_i)^{\phi=0}$ is spanned by $\rk_{\bQ_p}\bL$ elements $s_{i1},\dots, s_{id}$ with $\phi(s_{ij})=0$ for all $j$. By (\ref{de rham decompletion formula}) each $s_{ij}$ is stabilized by an open subgroup of $\Gamma$. Therefore for large enough $n$ they all are stabilized by $\Gamma_n:=\Gal(\Kinf/K_n)$ so $\rk_{\cO_{X_{K_n}}}D_{\dR}(\bL|_{X_{K_n}})\geq \rk_{\bQ_p}\bL$. These ranks are hence equal and $\bL|_{X_{K_n}}$ is de Rham which implies that $\bL$ itself is de Rham.

Conversely, suppose that $\bL$ is de Rham. By \cite[Theorem 3.2.7(2)]{dllz}  the sheaf $D_{\dR,\log}(\bL)=\cR\cH^{\log}(\bL)^{\Gamma}$ is a locally free sheaf of rank $\rk_{\bZ_p}\bL$. The canonical injective map $D_{\dR,\log}(\bL)\otimes_K \Kinf\to \RH^{\log}(\bL)^{\phi=0}$ is an isomorphism because the map $D_{\dR,\log}(\bL)\widehat{\otimes}_K{L_{\dR}}\to\cR\cH^{\log}(\bL)$ is.  In particular, $\RH^{\log}(\bL)^{\phi=0}$ is a vector bundle of rank $\rk_{\bQ_p}\bL$ on $\oX_{\Kinf}$.

(ii) The operator $\phi$ on $\Hig^{\log}(\bL)$ is semi-simple with integer eigenvalues if and only if its restriction $\Hig^{\log}(\bL)|_X$ is. The latter condition is a restatement of what it means for $\bL$ to be Hodge-Tate by \cite[Theorem 5.5]{kht}.
\end{proof}

\section{Geometrically irreducible local systems are Hodge-Tate up to a twist}

As before, $X$ is a geometrically connected smooth rigid-analytic variety over $K$ embedded as a Zariski open subvariety into a smooth proper rigid-analytic variety $\oX$ such that $\oX\setminus X$ is a normal crossings divisor. 

\begin{thm}\label{ht main}
Suppose that $\bL$ is a $\obQ_p$-local system on $X$ that admits a $\obZ_p$-lattice. If $\bL|_{X_{\oK}}$ is irreducible then there exists a character $\chi:G_{K}\to\obQ_p^{\times}$ such that $\bL\otimes\chi$ is Hodge-Tate.
\end{thm}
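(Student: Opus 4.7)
The strategy is precisely that sketched at the end of Section 1, now executed in the $\obQ_p$-coefficient setting by first descending $\bL$ to an $E$-local system for some finite extension $E/\bQ_p$ and carrying the $E$-action through the decompleted Simpson correspondence.

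I would begin by applying Proposition \ref{main decompleted rh} to obtain the vector bundle with Higgs field $(\Hig^{\log}(\bL),\theta)$ on $\oX_{\Kinf}$ and the Sen endomorphism $\phi$ satisfying $\theta\circ\phi=(\phi-1)\circ\theta$; all of these carry a natural $E$-action. Schur's lemma applied to the absolutely irreducible $E$-representation gives $\End_{X_{\oK},E}(\bL)=E$, and the $E$-equivariant version of Proposition \ref{main decompleted rh}(iii) translates this to the statement that every $E$-linear Higgs endomorphism of $\Hig^{\log}(\bL)$ is a scalar; in particular $\Hig^{\log}(\bL)$ is an indecomposable $E$-Higgs bundle.

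The next step is a linear-algebraic analysis of the interplay between $\phi$ and $\theta$. The relation $\theta\circ\phi=(\phi-1)\circ\theta$ forces $\theta$ to send the generalized $\lambda$-eigenspace $V_\lambda$ of $\phi$ into $V_{\lambda+1}\otimes\Omega^{1,\log}_{\oX_{\Kinf}/\Kinf}$. Hence, for each coset $[\alpha]\in\oK/\bZ$ the sum $V_{[\alpha]}:=\bigoplus_{\lambda\in\alpha+\bZ}V_\lambda$ is a Higgs sub-bundle, yielding a direct-sum decomposition $\Hig^{\log}(\bL)=\bigoplus_{[\alpha]}V_{[\alpha]}$ as Higgs bundles. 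Indecomposability forces all eigenvalues of $\phi$ to lie in a single coset $\alpha+\bZ$. Since the characteristic polynomial of $\phi$ has coefficients in $K$ by Proposition \ref{main decompleted rh}(ii), its trace $\Tr\phi=d\alpha+\sum_i n_i\in K$ (with $d=\rk\bL$ and $n_i\in\bZ$) yields $\alpha\in K$.

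Next I would produce a continuous character $\chi:G_K\to K^\times$ of Sen weight $-\alpha$: on a sufficiently small open subgroup of $G_K$ the formula $g\mapsto\exp(-\alpha\log\chi_{\cycl}(g))$ converges in $K^\times$, and extends to $G_K$ by the divisibility of $\obQ_p^\times$. Replacing $\bL$ by $\bL\otimes\chi$ shifts the Sen operator to $\phi-\alpha$, whose eigenvalues now lie in $\bZ$. For semisimplicity, observe that the semisimple part $\phi^{ss}$ acts as the scalar eigenvalue on each generalized eigenspace, so it satisfies the same commutation relation $\theta\circ\phi^{ss}=(\phi^{ss}-1)\circ\theta$. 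Consequently the nilpotent part $N:=\phi-\phi^{ss}$ commutes with $\theta$ and is a genuine $E$-linear Higgs endomorphism of $\Hig^{\log}(\bL)$. By the scalar-endomorphism conclusion of the first step, $N$ is a scalar in $E$, and being nilpotent, $N=0$. Hence $\phi-\alpha$ is semisimple with integer eigenvalues, and Lemma \ref{senoperator criteria}(ii) gives that $\bL\otimes\chi$ is Hodge-Tate.

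The main obstacle is the very first step: making the $E$-coefficient version of Proposition \ref{main decompleted rh}(iii) precise, so that $E$-absolute irreducibility of $\bL$ translates cleanly into the scalar-valued-ness of $E$-linear Higgs endomorphisms of $\Hig^{\log}(\bL)$. Once this is secured, the remainder of the argument is driven entirely by the simple but powerful commutation relation $\theta\phi=(\phi-1)\theta$ and is, in essence, linear algebra on generalized eigenspaces plus one explicit character construction.
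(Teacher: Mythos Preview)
Your eigenspace analysis is essentially the paper's argument and is correct once carried out on each component of the decomposition $\Hig^{\log}(\bL)=\bigoplus_{\tau:E\hookrightarrow\oK}\Hig^{\log}(\bL)_\tau$ induced by the $E\otimes_{\bQ_p}K$-action. This is also how the ``$E$-equivariant Proposition \ref{main decompleted rh}(iii)'' you flag becomes precise: the comparison gives $\End_{\Higgs}\Hig^{\log}(\bL)\simeq E\otimes_{\bQ_p}\Kinf=\prod_\tau K_{\tau,\infty}$, so each $\Hig^{\log}(\bL)_\tau$ has only $K_{\tau,\infty}$-scalar Higgs endomorphisms. The indecomposability and semisimplicity arguments then run on each $\tau$-piece separately, producing for each $\tau$ an $a_\tau\in\oK$ with the $\tau$-weights in $a_\tau+\bZ$. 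There is no single $\alpha$; the different $a_\tau$ are in general unrelated. What you have proved at this point is exactly that the \emph{projective} representation is Hodge-Tate.

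The genuine gap is the construction of $\chi$. Your formula $g\mapsto\exp(-\alpha\log\chi_{\cycl}(g))$ produces a character with values in $K^\times$, but the theorem requires $\chi:G_K\to\obQ_p^\times$ (i.e., into $E'^\times$ for a finite $E'/\bQ_p$), and for general $K$ these groups are incomparable. Moreover, what is actually needed is a character whose $\tau$-Sen weight is $-a_\tau$ for \emph{every} $\tau$ simultaneously. The paper addresses this in Lemma \ref{projective lift}: one first observes that $(\det\bL)_x$, for a closed point $x$, is an honest $E^\times$-valued character of $G_{K'}$ whose $\tau$-weight is $d\cdot a_\tau+\sum_j n_{\tau,j}$; one then corrects by a Lubin--Tate-type character with integral weights and extracts a $d$-th root (Lemma \ref{hodge-tate: character lemma}). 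The passage through an actual point of $X$ is what guarantees the target of $\chi$ is $\obQ_p^\times$ rather than $K^\times$, and the paper remarks explicitly that it is unknown whether the naive ``prescribe arbitrary weights'' lemma holds for general $K$. So the obstacle is not Proposition \ref{main decompleted rh}(iii), which is routine, but the lifting from ``projectively Hodge--Tate'' to ``Hodge--Tate after a $\obQ_p^\times$-twist''.
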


The next lemma says that proving the theorem amounts to proving that the projective representation of the fundamental group corresponding to $\bL$ is Hodge-Tate. In the case where $K$ is a finite extension of $\bQ_p$ this statement follows from \cite[Corollary 3.2.12]{patrikis}. 

Let us introduce some notation for working with the Sen endomorphism associated to a local system $\bL$ of $E$-vector spaces, for a finite extension $E\supset \bQ_p$. Viewing $\bL$ as a $\bQ_p$-local system (of rank $\rk_{E}\bL\cdot[E:\bQ_p]$) we may consider the associated Higgs bundle $\Hig^{\log}(\bL)$ on $\oX_{\Kinf}$ that has a structure of an $E$-module, by functoriality. In particular, $\Hig^{\log}(\bL)$ carries an action of the algebra $E\otimes_{\bQ_p}K$. This algebra can be decomposed as $E\otimes_{\bQ_p}K=\bigoplus\limits_{\tau:E\to \oK}K_{\tau}$ where $\tau$ runs over all $\bQ_p$-linear embeddings of $E$ into $\oK$ and $K_{\tau}$ is the subfield of $\oK$ generated by $K$ and $\tau(E)$. This decomposition induces a decomposition of every $E\otimes_{\bQ_p}K$-module. In particular, we get a decomposition $\Hig^{\log}(\bL)=\bigoplus\limits_{\tau}\Hig^{\log}(\bL)_{\tau}$ where $\Hig^{\log}(\bL)_{\tau}$ is defined as $\Hig^{\log}(\bL)\otimes_{E\otimes_{\bQ_p}K}K_{\tau}$. This decomposition is respected by the Sen endomorphism $\phi$ and we denote by $\phi_{\tau}$ its component on the summand $\Hig^{\log}(\bL)_{\tau}$. For the duration of the proofs of Lemmas \ref{projective lift} and \ref{hodge-tate: character lemma} we will enhance the notation $\phi_{\tau}$ to $\phi_{\bL,\tau}$ as the discussion will involve Sen endomorphisms of various local systems. We will refer to eigenvalues of $\phi_{\bL,\tau}$ as {\it $\tau$-Hodge-Tate weights} of $\bL$.

\begin{lm}\label{projective lift}
Let $\ox$ be arbitrary geometric point of $X$ and $E\supset\bQ_p$ be a finite extension. If $\rho:\pi_1^{\et}(X,\ox)\to GL_d(\cO_E)$ is a continuous representation such that the corresponding projective representation $\rho_{\proj}:\pi_1^{\et}(X,\ox)\to PGL_d(\cO_E)$ is Hodge-Tate then there exists a character $\chi:G_K\to\obQ_p^{\times}$ such that $\rho\otimes\chi$ is Hodge-Tate.
\end{lm}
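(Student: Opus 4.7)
\emph{Plan.} Let $\bL$ be the $E$-local system on $X$ attached to $\rho$, with Higgs bundle $\Hig^{\log}(\bL)$ on $\oX_{\Kinf}$ and Sen endomorphism $\phi$ (Proposition \ref{main decompleted rh}). The approach is to extract from the projective Hodge-Tate hypothesis that the eigenvalues of $\phi$ in each $E$-isotypic piece lie in a single coset of $\bZ$ inside $\oK$, and then to produce a character of $G_K$ that shifts these cosets into $\bZ$.

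The $E$-action on $\Hig^{\log}(\bL)$ yields a decomposition $\Hig^{\log}(\bL) = \bigoplus_{\tau : E \to \oK} \Hig^{\log}(\bL)_\tau$ with $\phi = \bigoplus_\tau \phi_\tau$. By Example \ref{projective rep example}, the hypothesis is equivalent to $\End_E(\bL) = \bL \otimes_E \bL^\vee$ being Hodge-Tate, so by Lemma \ref{senoperator criteria}(ii) each Sen operator $\ad(\phi_\tau)$ on $\End(\Hig^{\log}(\bL)_\tau)$ is semisimple with integer eigenvalues. Writing $\phi_\tau = S_\tau + N_\tau$ for its Jordan decomposition, $\ad(\phi_\tau) = \ad(S_\tau) + \ad(N_\tau)$ is the Jordan decomposition of $\ad(\phi_\tau)$, so $\ad(N_\tau) = 0$; since a nilpotent endomorphism commuting with every endomorphism must vanish, $N_\tau = 0$ and $\phi_\tau$ is semisimple. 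The fact that all differences of eigenvalues of $\phi_\tau$ are integers then forces these eigenvalues to lie in a single coset $a_\tau + \bZ \subset \oK$ for some $a_\tau \in \oK$. Because the characteristic polynomial of $\phi$ has coefficients in $K$ by Proposition \ref{main decompleted rh}(ii), the tuple $(a_\tau \bmod \bZ)_\tau$ is $G_K$-equivariant: $a_{\sigma\tau} \equiv \sigma(a_\tau) \pmod{\bZ}$ for every $\sigma \in G_K$.

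It remains to construct a continuous character $\chi : G_K \to L^\times$ for some finite $L \supset E$ whose Sen weight at each embedding $\tau : L \to \oK$ equals $-a_{\tau|_E}$ modulo $\bZ$: for such a $\chi$ the Sen endomorphism of $\bL \otimes \chi$ decomposes as $\bigoplus_\tau (\phi_\tau + c_\tau)$, remains semisimple, and acquires integer eigenvalues, so Lemma \ref{senoperator criteria}(ii) concludes that $\bL \otimes \chi$ is Hodge-Tate. When $[K:\bQ_p] < \infty$ this is exactly \cite[Corollary 3.2.12]{patrikis}. In general one builds $\chi$ by hand: the Galois-equivariance of $(a_\tau \bmod \bZ)_\tau$ is precisely the compatibility required for the target Sen-weight pattern to come from a single character of $G_K^{\ab}$, and combining the Lubin-Tate characters (which realize any prescribed integer weight pattern) with $p$-adic exponentiation inside $1 + \mathfrak{m}_L$ produces the desired weights in $\oK \setminus \bZ$. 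The main obstacle is this character-construction step; the rest of the proof is a formal Jordan-decomposition analysis of the Sen endomorphism combined with Lemma \ref{senoperator criteria}(ii).
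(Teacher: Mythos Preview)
Your analysis of the Sen endomorphism is correct and matches the paper: both arguments reduce the projective Hodge--Tate hypothesis to the statement that each $\phi_\tau$ is semisimple with eigenvalues in a single coset $a_\tau+\bZ$. Your Jordan-decomposition phrasing is a clean alternative to the paper's direct assertion.

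The gap is in the character construction for general $K$. You write that ``combining the Lubin--Tate characters (which realize any prescribed integer weight pattern) with $p$-adic exponentiation inside $1+\fm_L$ produces the desired weights in $\oK\setminus\bZ$,'' but this does not work as stated. Lubin--Tate theory applies to local fields with finite residue field; for an arbitrary complete discretely valued $K$ (e.g.\ $K=W(\overline{\bF_p})[1/p]$) there is no direct construction of a character of $G_K$ with a prescribed tuple of Sen weights $(a_\tau)_\tau$ lying in $\oK\setminus\bZ$. The paper says explicitly that the analogue of \cite[Lemma 2.1.3]{matteo} is not known for general $K$, so your Galois-equivariance observation, while correct, is not by itself enough to manufacture $\chi$.

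The paper circumvents this by bootstrapping from $\det\bL$. The eigenvalue of $\phi_{\det\bL,\tau}$ is $d\cdot a_\tau+\sum_j n_{\tau,j}$, and one first produces a character $\chi_0:G_K\to(E')^\times$ with exactly these weights: choose a point $x\in X(K')$ over a finite extension, set $\eta'=(\det\bL)_x$, and then use an induction-and-root trick (Lemma \ref{hodge-tate: character lemma}(ii)) to descend from $G_{K'}$ to $G_K$ --- the hypothesis of that lemma is supplied by Proposition \ref{main decompleted rh}(ii). Now $\chi_0^{-1}\cdot\det\bL$ has \emph{integer} $\tau$-weights, and those can be realised by a character $\chi_1$ pulled back along $G_K\to G_L$ with $L=\obQ_p\cap K$ (a genuine local field, where \cite[Lemma 2.1.3]{matteo} applies). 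Finally one takes a $d$-th root of $\chi_1\chi_0^{-1}$ via Lemma \ref{hodge-tate: character lemma}(i). The point is that the non-integer part of the weight pattern is never constructed abstractly; it is borrowed from the local system itself via $\det\bL$ evaluated at a point.
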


\begin{proof}
Denote by $\bL$ the $E$-local system corresponding to $\rho$. By definition, $\rho_{\proj}$ being Hodge-Tate means that the adjoint local system $\bL\otimes_E\bL^{\vee}$ is Hodge-Tate, cf. Example \ref{projective rep example}. 

The assumption that $\bL\otimes_E\bL^{\vee}$ is Hodge-Tate translates into the fact that the endomorphism $\phi_{\bL}\otimes 1-1\otimes \phi_{\bL}^{\vee}$ of $\Hig^{\log}(\bL)\otimes_E\Hig^{\log}(\bL)^{\vee}$ is semi-simple with integer eigenvalues. Here we have used that the functor $\Hig^{\log}|_{X_{\Kinf}}$ is monoidal together with the observation that semi-simplicity and integrality of eigenvalues can be checked after restricting to a dense open in $\oX_{\Kinf}$. In the notation of the paragraph preceeding the statement of the lemma, this is equivalent to saying that for every $\tau$ the Sen endomorphism $\phi_{\bL,\tau}\otimes 1-1\otimes\phi_{\bL,\tau}^{\vee}$ on $\Hig^{\log}(\bL)_{\tau}\otimes_{\cO_{\oX_{\Kinf^{\tau}}}}\Hig^{\log}(\bL)^{\vee}_{\tau}$ is semi-simple with eigenvalues in $\bZ$. This means that for every $\tau$ the operator $\phi_{\bL,\tau}$ is semi-simple and there exists an element $a_{\tau}\in\oK$ such that the eigenvalues of $\phi_{\bL,\tau}$ lie in $a_{\tau}+\bZ$.

Our goal now is to find a character $\chi:G_K\to E'^{\times}$ for a finite extension $E'\supset E$ such that the Sen endomorphism corresponding to the $E'$-local system $\bL\otimes_E \chi$ has integral eigenvalues. This would be immediate if $K$ was contained in $\obQ_p$ as then \cite[Lemma 2.1.3]{matteo} can be used to provide a character $\chi$ with the eigenvalue of $\phi_{\chi,\tau}$ on $\Hig(\chi)_{\tau}$ equal to $-a_{\tau}$ for every $\tau$. We don't know if the analog of \cite[Lemma 2.1.3]{matteo} is true for an arbitrary $K$ but we can nevertheless construct $\chi$ as follows.

Suppose that eigenvalues of $\phi_{\bL,{\tau}}$ on $\Hig^{\log}(\bL)_{\tau}$ are $a_{\tau}+n_{\tau,1},\dots, a_{\tau}+n_{\tau, d}$ for some $n_{\tau,j}\in\bZ$. Then the eigenvalue of $\phi_{\det\bL,\tau}$ on $\Hig^{\log}(\det\bL)_{\tau}$ is equal to $d\cdot a_{\tau}+\sum\limits_{j=1}^d n_{\tau,j}$. To get started on the construction of $\chi$, we would like to have a character $\chi_0:G_K\to (E')^{\times}$ with values in a finite extension $E'\supset E$, such that the eigenvalue of $\phi_{\chi_0,\tau}$ on $\Hig(\chi_0)_{\tau}$ is equal to that of $\phi_{\det\bL,\tau}$ on $\Hig^{\log}(\det\bL)_{\tau}$. If $X$ has a $K$-point $x$ then such $\chi_0$ is provided simply by $(\det\bL)_x$. In general, if $X$ acquires a rational point $x\in X(K')$ after a finite extension $K'\supset K$ we define $\chi_0$ as the output of Lemma \ref{hodge-tate: character lemma} (ii) applied to $\eta'=(\det \bL)_x$, the assumption of this lemma being satisfied by Proposition \ref{main decompleted rh} (ii).

Next, we will construct a character $\chi_1:G_K\to E^{\times}$ such that the $\tau$-Hodge-Tate weight of $\chi_{1,\tau}$ is the integer $\sum\limits_{j=1}^d n_{\tau,j}$, for every $\tau$. Let $L$ be the field $\obQ_p\cap K\subset \oK$. The inclusion $\oL=\obQ_p\subset \oK$ defines a map of Galois groups $\alpha:G_K\to G_L$ and, if $\eta:G_L\to E^{\times}$ is a character, the $\tau$-weight of $\eta$ coincides with the $\tau$ weight of $\eta\circ \alpha$, for every embedding $\tau:E\to\oK$ (all embeddings necessarily factor through $\oL$). The character $\chi_1$ can be now obtained by precomposing with $\alpha$ the character provided by \cite[Lemma 2.1.3]{matteo} applied to the field $L$ and the set of weights $(\sum\limits_{j=1}^d n_{\tau,j})_{\tau}$.

The desired character $\chi$ can now be defined as the "$d$-th root" of $\chi_1\otimes\chi_0^{-1}$, provided by Lemma \ref{hodge-tate: character lemma}(i).
\end{proof}

\begin{lm}
\label{hodge-tate: character lemma}
\begin{enumerate}[(i)]
\item For any character $\eta:G_K\to (E')^{\times}$ and an integer $d$ there exists a character $\eta':G_K\to E'^{\times}$ with values in a finite extension $E'\supset E$ such that $(\eta')^d$ is equal to $\eta$ when restricted to an open subgroup of $G_K$.

\item Suppose that $K'\supset K$ is a finite extension and $\eta':G_{K'}\to E^{\times}$ is a character such that for every $\tau:E\to \oK=\oK'$ the $\tau$-Hodge-Tate weight of $\eta'$ lies in $K_{\tau}$ (rather than in a larger field $K'_{\tau}$). Then there exists a character $\eta:G_K\to (E')^{\times}$ with values in a finite extension $E'\supset E$ such that the $\tau$-Hodge-Tate weight of $\eta$ is equal to the $\tau|_E$-Hodge-Tate weight of $\eta'$, for every $\tau:E'\to\oK$.
\end{enumerate}
\end{lm}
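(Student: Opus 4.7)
My plan for part (i) uses the standard exp/log technique: the character $\eta$ factors through the abelianization $G_K^{\ab}$, and continuity yields an open subgroup $\bar H \subset G_K^{\ab}$ on which $\eta$ takes values in $1 + \fm_{E'}^N$ with $N$ large enough that the $p$-adic logarithm converges. Setting $\mu := \exp\bigl(\tfrac{1}{d}\log\eta|_{\bar H}\bigr) : \bar H \to (E')^{\times}$ produces a continuous character with $\mu^d = \eta|_{\bar H}$. Since $\obQ_p^{\times}$ is a divisible abelian group and $\bar H$ has finite index in $G_K^{\ab}$, by injectivity of divisible objects in the category of abelian groups $\mu$ extends to a continuous character $G_K^{\ab} \to \obQ_p^{\times}$ (continuity is automatic because $\bar H$ is open, so the extension is free on finitely many coset representatives); its image lies in a finite extension $E' \supset E$, and pulling back to $G_K$ yields the desired $\eta'$.

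For part (ii), the idea is to form the $G_K$-character $\det \Ind_{G_{K'}}^{G_K}\eta'$, compute its Hodge--Tate weight as $n = [K':K]$ times the HT weight of $\eta'$, and then extract an $n$-th root via part (i). I encode the Hodge--Tate weights of $\eta'$ as an element $a = (a_{\tau})_{\tau} \in \prod_{\tau : E \to \oK} K'_{\tau} = E \otimes_{\bQ_p} K'$; by hypothesis $a$ in fact lies in the subalgebra $E \otimes_{\bQ_p} K \subset E \otimes_{\bQ_p} K'$. Passing to the Galois closure $K''$ of $K'/K$ with group $\Gamma := \Gal(K''/K)$, Mackey's formula gives
\[
(\Ind_{G_{K'}}^{G_K}\eta')\big|_{G_{K''}} \simeq \bigoplus_{[g] \in G_K/G_{K'}} {}^{g}\bigl(\eta'|_{G_{K''}}\bigr),
\]
and each summand is a rank-one $G_{K''}$-character whose HT weight in $E \otimes_{\bQ_p} K''$ is obtained from $\tilde a$, the image of $a$, by the action of $[g]$ on the $K''$-factor via $\Gamma$. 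Because $a \in E \otimes_{\bQ_p} K = (E \otimes_{\bQ_p} K'')^{\Gamma}$ is $\Gamma$-invariant, every summand contributes weight $\tilde a$. Summing, $\det \Ind_{G_{K'}}^{G_K}\eta' : G_K \to E^{\times}$ has HT weight $n a \in E \otimes_{\bQ_p} K$.

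Applying part (i) with parameter $n$ to $\det \Ind_{G_{K'}}^{G_K}\eta'$ produces a character $\eta : G_K \to (E')^{\times}$ whose $n$-th power agrees with $\det \Ind_{G_{K'}}^{G_K}\eta'$ on an open subgroup. Comparison of HT weights on this subgroup divides by $n$ and gives HT weight of $\eta$ equal to $a$; viewing $a$ in $E' \otimes_{\bQ_p} K$ under the inclusion $E \hookrightarrow E'$, the $\tau$-HT weight of $\eta$ equals $a_{\tau|_E}$ for every $\tau : E' \to \oK$, as required. I expect the main technical obstacle to be the bookkeeping of Galois actions on Hodge--Tate weights across the tower $K \subset K' \subset K''$: the hypothesis $a_{\tau} \in K_{\tau}$ plays the essential role of forcing $\Gamma$-invariance of $a$ in $E \otimes_{\bQ_p} K''$, which is precisely what makes the $\det \Ind$ construction output a clean $n$-fold multiple of $a$ that can be undone by part (i).
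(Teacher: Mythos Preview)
Your proposal is correct and follows essentially the same approach as the paper. For (i), the paper packages the $\exp(\tfrac{1}{d}\log)$ construction as a ``$d$-th root'' homomorphism $r_d:\cO_E^{\times}\to\obQ_p^{\times}$ on the target side (using the decomposition $\cO_E^{\times}=(\cO_E^{\times})_{\mathrm{tors}}\times U_E$ and divisibility of $\obQ_p^{\times}$ to extend from an open subgroup of $U_E$), then sets $\eta'=r_d\circ\eta$; you carry out the same series construction directly on the character and extend via injectivity of $\obQ_p^{\times}$---the content is identical. For (ii), the paper first replaces $K'$ by its Galois closure and then observes that $(\det\Ind_{G_{K'}}^{G_K}\eta')|_{G_{K'}}=\prod_{\sigma\in\Gal(K'/K)}\eta'^{\sigma}$ with each conjugate having the same Hodge--Tate weights by hypothesis, whereas you keep $K'$ and pass to the Galois closure $K''$, invoking Mackey to reach the same product of conjugates; your formulation of the key point as $\Gamma$-invariance of $a\in E\otimes_{\bQ_p}K$ inside $E\otimes_{\bQ_p}K''$ is exactly the hypothesis the paper uses. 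One small remark: your phrase ``the extension is free on finitely many coset representatives'' is imprecise (the injectivity extension is not a free choice), but your actual argument for continuity---that a homomorphism continuous on an open subgroup is continuous---is correct.
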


\begin{proof}
(i)  A very similar argument to the one we are going to give appears in the proof of \cite[Lemma 2.1.3]{matteo}. We will prove that there exists a continuous homomorphism $r_d:\cO_{E}^{\times}\to\obQ_p^{\times}$ that is given by the formula $r_d(x)=(1+x)^{1/d}:=\sum\limits_{i=0}^{\infty}\binom{1/d}{i}x^i$ for $x$ from an open subgroup of $\cO_{E}^{\times}$. Indeed, the topological group $\cO_E^{\times}$ can be decomposed as a product $\cO_E^{\times}=(\cO_E^{\times})_{\mathrm{tors}}\times U_E$ where $U_E$ is an open subgroup isomorphic to $\bZ_p^{[E:\bQ_p]}$. We can define a homomorphism $r'_d:U_E\to\obQ_p^{\times}$ by the formula $r'_d(x)=(1+x)^{1/d}$ on an open subgroup of $U_E$ where the series $(1+x)^{1/d}$ converges and then extend it arbitrarily to all of $U_E$ using that $\obQ_p^{\times}$ is a divisible group. The desired $r_d$ is then obtained by precomposing $r'_d$ with the projection $\cO_{E}^{\times}\to U_E$.

The "$d$-th root" of any $\eta'$ can now be defined as $r_d\circ \eta'$ using that the image of $\eta'$ is contained in $\cO_{E}^{\times}\subset E^{\times}$.

(ii) Extending $K'$, we may assume that it is Galois over $K$. The finite Galois group $\Gal(K'/K)$ then acts on $G_{K'}$ by conjugation and, for an element $\sigma\in \Gal(K'/K)$ we denote by $\eta'^{\sigma}$ the precomposition of the character $\eta'$ with the automorphism induced by $\sigma$. By our assumption, the Hodge-Tate weights of $\eta'$ and $\eta'^{\sigma}$ coincide for every $\sigma$. Hence the $\tau$-Hodge-Tate weight of the character $\varepsilon':=\prod\limits_{\sigma\in \Gal(K'/K)}\eta'^{\sigma}$ is equal to $[K':K]$ times the $\tau$-weight of $\eta'$, for every $\tau$. The character $\varepsilon'$ visibly extends to the character $\varepsilon:=\det\Ind^{G_{K'}}_{G_K}\eta'$ of $G_K$ so the desired $\eta$ can be obtained as the "$[K':K]$th root" of $\varepsilon$.
\end{proof}

Let us recall the generalized eigenspace decomposition of vector bundles with respect to an endomorphism. Suppose that we are given a locally free finite rank sheaf $M$ on a ringed space $(Z,\cO_Z)$ equipped with an endomorphism $f:M\to M$ such that the characteristic polynomial $\chi_f(T):=\det(T\cdot \Id_M-f)\in H^0(Z,\cO_Z)[T]$ has coefficients and all of its roots in a field $K\subset H^0(Z,\cO_Z)$. If $\chi_f(T)$ decomposes as $\prod (T-\lambda_i)^{m_i}$ with $\lambda_1,\dots\lambda_r$ being pairwise distinct elements of $K$ then the algebra $K[T]/\chi_f(T)\simeq\bigoplus\limits_i K[T]/(T-\lambda_i)^{m_i}$ acts on $M$ and induces a decomposition $M=\bigoplus\limits_i M_{\lambda_i}$ where each $M_{\lambda_i}:=\ker (f-\lambda_i\cdot \Id_{M})^{m_i}$ is a locally free sheaf on $(Z,\cO_Z)$.
\begin{proof}[Proof of Theorem \ref{ht main}]
Let $E$ be a finite extension of $\bQ_p$ such that $\bL$ is a scalar extension of a local system with coefficients in $E$, we will denote this local system by the same symbol $\bL$. By Lemma \ref{projective lift} it is enough to prove that the projective representation of $\pi_1^{\et}(X,\ox)$ corresponding to $\bL$ is Hodge-Tate. That can be checked after the base change of $X$ and $\bL$ to a finite extension of $K$, we may therefore assume that any embedding $E\to\oK$ factors through $K$, and all the eigenvalues of the Sen endomorphism belong to $K$.

We have $\End_{E}\bL|_{X_{\oK}}=E\cdot\Id$. Viewing $\bL$ as a $\bQ_p$-local system, we can form a Higgs bundle $\Hig^{\log}(\bL)$ on $\oX_{\Kinf}$ of rank $[E:\bQ_p]\cdot\rk_{E}\bL$ equipped with an action of $E$. This action commutes with the Sen endomorphism $\phi$ of the bundle $\Hig^{\log}(\bL)$. By Proposition \ref{main decompleted rh} (iii) we have $(\End_{\oX_{\Kinf},\Higgs}\Hig^{\log}(\bL))\otimes_{\Kinf}\hkinf=(\End_{X_{\oK}}\bL\otimes_{\bQ_p}C)^{H_K}=E\otimes_{\bQ_p}C^{H_K}=E\otimes_{\bQ_p}\hkinf$ where the second equality follows from the fact that all endomorphisms of $\bL|_{X_{\oK}}$ extend to endomorphisms of $\bL$ over $K$, so the Galois action on the endomorphisms of $\bL|_{X_{\oK}}$ is trivial. Therefore, the map $E\to \End_{\oX_{\Kinf},\Higgs}\Hig^{\log}(\bL)$ induces an isomorphism $E\otimes_{\bQ_p}\Kinf\simeq\End_{\oX_{\Kinf},\Higgs}\Hig^{\log}(\bL)$.

As we have discussed at the beginning of this section, we get a decomposition $\Hig^{\log}(\bL)=\bigoplus\limits_{\tau}\Hig^{\log}(\bL)_{\tau}$. By the description of endomorphisms, for each $\tau$ we have that all endomorphisms of the Higgs bundle $\Hig^{\log}(\bL)_{\tau}$ are scalars from $\Kinf$. For a given $\tau$, suppose that $\prod\limits_i (t-\lambda_i)^{m_i}$ with $\lambda_i\in \oK$ is the minimal polynomial of $\phi_{\tau}$ on $\Hig^{\log}(\bL)_{\tau}$. Enlarging $K$, we may assume that all $\lambda_i$s lie in $K$. Consider the generalized eigenspace decomposition $\Hig^{\log}(\bL)_{\tau}=\bigoplus\limits_{i}\Hig^{\log}(\bL)_{\tau,\lambda_i}$ with respect to $\phi_{\tau}$. Since $\theta\circ\phi=(\phi-1)\circ\theta$, for a local section $s\in \Hig^{\log}(\bL)_{\tau,\lambda_i}(U)$ we have $$\phi_{\tau}(\theta(s))=\theta(\phi_{\tau}(s)+s)=\theta(\lambda_is+s)=(\lambda_i+1)\cdot\theta(s)$$ In other words, the Higgs field $\theta$ takes the summand $\Hig^{\log}(\bL)_{\tau,\lambda}$ to $\Hig^{\log}(\bL)_{\tau,\lambda+1}\otimes_{\cO_{\Kinf}}\Omega^1_{X_{\Kinf}/\Kinf}$, for every $\lambda$. 

If all $\lambda_i$s do not belong to a single coset of $\bZ$ in $\oK$ then $\Hig^{\log}(\bL)_{\tau}$ is decomposable into a direct sum of two non-zero Higgs bundles, which contradicts the fact that all endomorphisms are scalars. It remains to show that $\phi_{\tau}$ is semi-simple on $\Hig^{\log}(\bL)_{\tau}$. Define an endomorphism $f$ of $\Hig^{\log}(\bL)_{\tau}=\bigoplus\limits_i \Hig^{\log}(\bL)_{\tau,\lambda_i}$ by $\phi_{\tau}-\lambda_i$ on the summand $\Hig^{\log}(\bL)_{\tau,\lambda_i}$. By construction, it commutes with the Higgs field and is nilpotent. Therefore, it is equal to zero so $\prod\limits_i (\phi_{\tau}-\lambda_i)=0$ on $\Hig^{\log}(\bL)_{\tau}$. In other words, $\phi_{\tau}$ is semi-simple. We infer that for each $\tau$ there is an element $a_{\tau}\in\oK$ such that $\phi_{\tau}$ acts on $\Hig^{\log}(\bL)_{\tau}$ semi-simply with eigenvalues from $a_{\tau}+\bZ$. This means the the projective representation corresponding to $\bL$ is Hodge-Tate.
\end{proof}

\section{A criterion of de Rhamness for local systems with integral weights}

We stay in the same setup: $X$ is a smooth rigid-analytic variety over  $K$ embedded into a smooth proper $\oX$ such that $\oX\setminus X$ is a normal crossings divisor.

\begin{pr}\label{mock Ddr}
Suppose that $\bL$ is a $\bQ_p$-local system of rank $d$ on $X$ with Hodge-Tate weights in $\bZ$. Then the sheaf \begin{equation}\cA(\bL):=\RH^{\log}(\bL)^{\phi^d=0}\end{equation} is a locally free sheaf of rank $d$ of $\cO_{\oX_{\Kinf}}$-modules on $\oX_{\Kinf}$ equipped with an integrable connection and Griffiths transverse filtration, induced from those on $\RH^{\log}(\bL)$. Moreover, the Higgs bundle $(\bigoplus\limits_{i\in 
\bZ}\gr^i\cA(\bL),\gr(\nabla))$ is isomorphic to $\Hig^{\log}(\bL)$ (the isomorphism is not compatible with  the Sen endomorphisms).
\end{pr}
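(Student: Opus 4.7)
The plan is to exploit the generalized eigenspace decomposition of the Sen endomorphism on $\Hig^{\log}(\bL)$. Since the Hodge-Tate weights lie in $\bZ$ and $\det(T\cdot\Id-\phi)$ has coefficients in $K$ by Proposition \ref{main decompleted rh}(ii), we obtain a canonical decomposition into locally free subsheaves $\Hig^{\log}(\bL)=\bigoplus_{k\in\bZ}\Hig^{\log}(\bL)_k$, where $\Hig^{\log}(\bL)_k:=\ker(\phi-k)^d$. Only finitely many summands are nonzero and $\sum_k \rk\Hig^{\log}(\bL)_k=d$. The relation $\theta\circ\phi=(\phi-1)\circ\theta$ forces $\theta(\Hig^{\log}(\bL)_k)\subset \Hig^{\log}(\bL)_{k+1}\otimes\Omega^{1,\log}_{\oX_{\Kinf}/\Kinf}$, so $\Hig^{\log}(\bL)$ is a system of Hodge bundles in Simpson's sense.

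Equip $\cA(\bL)=\RH^{\log}(\bL)^{\phi^d=0}$ with the induced filtration $F^j\cA(\bL):=F^j\RH^{\log}(\bL)\cap\cA(\bL)$. From $[\phi,t]=t$ one checks that $\phi$ preserves the filtration on $\RH^{\log}(\bL)$, and under the identification $\gr^j\RH^{\log}(\bL)\simeq\Hig^{\log}(\bL)$ it acts on $\gr^j$ as the Sen operator of $\Hig^{\log}(\bL)$ shifted by $j$. Any local section of $F^j\cA(\bL)$ therefore projects into $\ker(\phi+j)^d=\Hig^{\log}(\bL)_{-j}\subset\gr^j$, yielding an injection $\gr^j\cA(\bL)\hookrightarrow\Hig^{\log}(\bL)_{-j}$. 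Picking $j_0\leq -\max_i h_i$ (below which every $\Hig^{\log}(\bL)_{-j}$ vanishes) gives $\cA(\bL)\subset F^{j_0}\RH^{\log}(\bL)$. Conversely, for $j_1>-\min_i h_i$, on every $\gr^j$ with $j\geq j_1$ the Sen operator has only positive integer eigenvalues, so its characteristic polynomial on each finite quotient $F^{j_1}/F^{j_1+m}$ has nonzero constant term; by $t$-adic completeness $\phi$ is then invertible on $F^{j_1}\RH^{\log}(\bL)$, forcing $\cA(\bL)\cap F^{j_1}\RH^{\log}(\bL)=0$.

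Fix such $j_0,j_1$. The quotient $M:=F^{j_0}\RH^{\log}(\bL)/F^{j_1}\RH^{\log}(\bL)$ is locally free of finite rank over $\cO_{\oX_{\Kinf}}$ and its Sen operator has only integer eigenvalues whose characteristic polynomial splits over $K$, so $M$ decomposes into $\phi$-invariant locally free direct summands $M=\bigoplus_{\lambda\in\bZ}M_{\lambda}$. Exactness of generalized eigenspaces with respect to the filtration on $M$ gives $\gr^j M_0=(\gr^j M)_0=\Hig^{\log}(\bL)_{-j}$, so $E:=M_0$ has rank $\sum_j\rk\Hig^{\log}(\bL)_{-j}=d$; since $\phi$ on $E$ has all eigenvalue $0$ and total rank $d$, its nilpotency order is at most $d$ and $E=\ker\phi^d$ on $M$. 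The projection $\cA(\bL)\hookrightarrow F^{j_0}\RH^{\log}(\bL)\twoheadrightarrow M$ factors through $E$ with zero kernel. For surjectivity onto $E$, given $\bar s\in E$ choose any lift $\tilde s\in F^{j_0}\RH^{\log}(\bL)$; then $\phi^d(\tilde s)\in F^{j_1}\RH^{\log}(\bL)$ and, inverting $\phi^d$ on $F^{j_1}\RH^{\log}(\bL)$, the element $s:=\tilde s-(\phi|_{F^{j_1}})^{-d}(\phi^d(\tilde s))\in\cA(\bL)$ projects to $\bar s$. Hence $\cA(\bL)\simeq E$ is locally free of rank $d$ over $\cO_{\oX_{\Kinf}}$ with $\gr^j\cA(\bL)\simeq\Hig^{\log}(\bL)_{-j}$.

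The Sen operator $\phi$ commutes with the connection $\nabla$: since $\nabla$ is $\Gamma$-equivariant and $\phi$ is obtained by differentiating the $\Gamma$-action, $\nabla\phi=\phi\nabla$. Thus $\nabla$ preserves $\cA(\bL)=\ker\phi^d$, and Griffiths transversality $\nabla(F^j\RH^{\log}(\bL))\subset F^{j-1}\RH^{\log}(\bL)\otimes\Omega^{1,\log}_{\oX_{\Kinf}/\Kinf}$ restricts to $\nabla(F^j\cA(\bL))\subset F^{j-1}\cA(\bL)\otimes\Omega^{1,\log}_{\oX_{\Kinf}/\Kinf}$. The induced Higgs field on $\bigoplus_j\gr^j\cA(\bL)\simeq\bigoplus_j\Hig^{\log}(\bL)_{-j}=\Hig^{\log}(\bL)$ is the restriction of $\theta$, which matches the shift property $\theta(\Hig^{\log}(\bL)_k)\subset\Hig^{\log}(\bL)_{k+1}\otimes\Omega^{1,\log}_{\oX_{\Kinf}/\Kinf}$ from the first paragraph, realizing the claimed isomorphism of Higgs bundles. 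The main technical input is the invertibility of $\phi$ on $F^{j_1}\RH^{\log}(\bL)$, relying on the $t$-adic completeness together with uniform positivity of eigenvalues of $\phi$ on graded pieces of high filtration level.
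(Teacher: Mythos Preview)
Your proof is correct and follows essentially the same approach as the paper's: both pass to the finite locally free quotient $F^{j_0}/F^{j_1}$, identify $\cA(\bL)$ with the generalized $0$-eigenbundle there, and read off $\gr^j\cA(\bL)\simeq\Hig^{\log}(\bL)_{-j}$. You are in fact more explicit than the paper in one place: the paper simply asserts that $\cA(\bL)$ coincides with the generalized eigenbundle of the finite quotient, whereas you supply the lifting argument (invertibility of $\phi$ on $F^{j_1}$ by $t$-adic completeness, then correcting an arbitrary lift) that makes the surjectivity onto $M_0$ transparent.
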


\begin{proof}
First, note that $\cA(\bL)$ is indeed a subsheaf of $\cO_{\oX_{\Kinf}}$-modules inside $\RH^{\log}(\bL)$ and it is preserved by the connection $\nabla$. Suppose that the weights of $\bL$ are contained in the set $\{i,i-1,\dots, i-n\}$ for $i\in\bZ,n\in\bN$. Then $\cA(\bL)$ is contained in $F^{-i}\RH^{\log}(\bL)$ because for $j<-i$ we have $(\gr^j\RH^{\log}(\bL))^{\phi^d=0}=(\Hig^{\log}(\bL)(j))^{\phi^d=0}=0$ as all eigenvalues of $\phi$ on $\Hig^{\log}(\bL)(j)$ are in the range $\{i+j,i+j-1,\dots i+j-n\}\subset \bZ_{<0}$. Similarly, the map $\cA(\bL)\to F^{[-i,-i+n]}\RH^{\log}(\bL):=F^{-i}\RH^{\log}(\bL)/F^{-i+n+1}\RH^{\log}(\bL)$ is injective.

The subquotient $F^{[-i,-i+n]}\RH^{\log}(\bL)$ is a locally free sheaf on $\oX_{\Kinf}$ (of rank $(n+1)\cdot d$) equipped with an $\cO_{\oX_{\Kinf}}$-linear endomorphism $\phi$ and $\cA(\bL)$ is its generalized eigenbundle with eigenvalue $0$. In particular, it is a direct summand of $F^{[-i,-i+n]}\RH^{\log}(\bL)$ and is therefore locally free.

 The generalized eigenspace of eigenvalue $0$ on the graded pieces $\gr^{j}\RH^{\log}(\bL)$ for $j\in[-i,-i+n]$ is given by $\Hig^{\log}(\bL)_{-j}$. Therefore we have an isomorphism of vector bundles \begin{equation}\label{grA bundle iso}\gr\cA(\bL)\simeq\bigoplus\limits_{j=-i}^{-i+n}\Hig^{\log}(\bL)_{-j}=\Hig^{\log}(\bL)\end{equation} The Higgs field $\gr(\nabla):\gr^j\cA(\bL)\to\gr^{j-1}\cA(\bL)\otimes\Omega^1_{X_{\Kinf}}$ is restricted from $\gr^j\RH^{\log}(\bL)\to\gr^{j-1}\RH^{\log}(\bL)\otimes\Omega^1_{X_{\Kinf}}$ and is thus given by $\theta_{\Hig^{\log}(\bL)}:\Hig^{\log}(\bL)_{-j}\to \Hig^{\log}(\bL)_{-j+1}\otimes\Omega^1_{X_{\Kinf}}$ so (\ref{grA bundle iso}) is compatible with Higgs fields.
\end{proof}

\begin{thm}\label{de rham main}
Suppose that $\bL$ is a $\bQ_p$-local system admitting a $\bZ_p$-lattice on $X$ with Hodge-Tate weights in $\bZ$ such that $\End_{X_{\oK}}(\bL)\otimes_{\bQ_p}C$ has no nilpotent elements. Then $\bL$ is de Rham.
\end{thm}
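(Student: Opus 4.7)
The plan is to apply Lemma~\ref{senoperator criteria}(i), which reduces the statement to showing that the Sen endomorphism $\phi$ vanishes on the subsheaf $\cA(\bL) = \RH^{\log}(\bL)^{\phi^d=0}$ produced by Proposition~\ref{mock Ddr}. Once $\phi|_{\cA(\bL)} = 0$, the inclusion $\cA(\bL) \subset \RH^{\log}(\bL)^{\phi=0}$, combined with the local freeness of $\cA(\bL)$ of rank $d = \rk_{\bQ_p}\bL$, forces $\RH^{\log}(\bL)^{\phi=0}$ to have rank exactly $d$, and hence $\bL$ is de Rham.

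The essential input is Proposition~\ref{main decompleted rh}(iii) together with the hypothesis on endomorphisms: the $H_K$-equivariant isomorphism $\End_{X_{\oK}}(\bL) \otimes_{\bQ_p} C \simeq \End_{\oX_{\Kinf},\Higgs}(\Hig^{\log}(\bL)) \otimes_{\Kinf} C$ implies that $\End_{\oX_{\Kinf},\Higgs}(\Hig^{\log}(\bL))$ is a reduced $\Kinf$-algebra, since $\Kinf \to C$ is a flat extension. The operator $\phi|_{\cA(\bL)}$ is $\cO_{\oX_{\Kinf}}$-linear, nilpotent of order at most $d$, commutes with the connection $\nabla$, and preserves the induced filtration $F^\bullet\cA(\bL)$ (because the identity $\phi(ts) = t\phi(s) + ts$ shows that $\phi$ preserves $F^\bullet\RH^{\log}(\bL)$). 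Assuming for contradiction that $\phi|_{\cA(\bL)}$ is nonzero, I would let $\ell \geq 0$ be the largest integer with $\phi(F^i\cA(\bL)) \subset F^{i+\ell}\cA(\bL)$ for all $i$ and consider the induced $\cO_{\oX_{\Kinf}}$-linear graded operator $\bar\phi \colon \bigoplus_i \gr^i\cA(\bL) \to \bigoplus_i \gr^{i+\ell}\cA(\bL)$. By maximality of $\ell$ the map $\bar\phi$ is nonzero; because $\phi$ commutes with $\nabla$, the induced $\bar\phi$ commutes with $\theta = \gr(\nabla)$; and under the identification $\Hig^{\log}(\bL) \simeq \bigoplus_i \gr^i\cA(\bL)$ of Proposition~\ref{mock Ddr}, $\bar\phi$ becomes a nonzero Higgs endomorphism of $\Hig^{\log}(\bL)$.

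The crux is to check that $\bar\phi$ is nilpotent, contradicting the reducedness of $\End_{\oX_{\Kinf},\Higgs}(\Hig^{\log}(\bL))$. If $\ell \geq 1$, then $\bar\phi^k$ shifts the grading by $k\ell$; since the Hodge-Tate weights of $\bL$ are integers and lie in the finite set of eigenvalues of $\phi$, the grading on $\Hig^{\log}(\bL)$ has only finitely many nonzero components, so $\bar\phi^k = 0$ for large $k$. If $\ell = 0$, then $\bar\phi$ preserves each summand $\gr^i\cA(\bL)$ and inherits $\bar\phi^d = 0$ from the identity $\phi^d = 0$ on $\cA(\bL)$. In either case $\bar\phi$ is a nilpotent element of a reduced algebra, so $\bar\phi = 0$, contradicting the choice of $\ell$. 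The main obstacle I expect is the bookkeeping of the filtration shift: making sure that both the $\ell = 0$ and $\ell \geq 1$ cases genuinely produce nilpotent leading terms, and that the identity $[\bar\phi, \theta] = 0$ holds at the graded level — this last point follows from $[\phi, \nabla] = 0$ combined with Griffiths transversality, but requires a careful pairing of the filtration shifts on $\phi$ and $\nabla$.
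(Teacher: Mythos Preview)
Your argument is correct and follows essentially the same route as the paper: both reduce via Lemma~\ref{senoperator criteria}(i) to showing $\phi|_{\cA(\bL)}=0$, take the maximal filtration shift of $\phi$, pass to the associated graded to obtain a nonzero nilpotent Higgs endomorphism of $\Hig^{\log}(\bL)$, and contradict Proposition~\ref{main decompleted rh}(iii). Your explicit case split $\ell=0$ versus $\ell\geq 1$ for verifying nilpotency of $\bar\phi$ is more detailed than the paper (which simply asserts nilpotency), but the underlying argument is the same.
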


\begin{proof}
We are going to use the criterion for de Rhamness given by Lemma \ref{senoperator criteria}(i) which says that $\bL$ is de Rham if the nilpotent endomorphism $\phi$ of $\cA(\bL)$ is in fact zero. Suppose that it is not zero. Let $i$ be the maximal integer such that $\phi(F^m\cA(\bL))\subset F^{m+i}\cA(\bL)$ for all $m\in\bZ$. We will view $\phi$ as a morphism $\cA(\bL)\to \cA(\bL)(i)$ of filtered vector bundles with connection where the twist denotes the shift of the filtration: $F^m(\cA(\bL)(i))=F^{m+i}\cA(\bL)$. For at least one $m$ the induced map $\phi:F^m/F^{m+1}\to F^{m+i}/F^{m+i+1}$ is non-zero so $\phi$ induces a non-zero nilpotent endomorphism of the Higgs bundle $\Hig^{\log}(\bL)=\bigoplus\limits_{m\in\bZ}\gr^{m}\cA(\bL)$.

By Proposition \ref{main decompleted rh}(iii) the algebra $\End_{\oX_{\Kinf},\Higgs}\Hig^{\log}(\bL)\otimes_{\Kinf}C$ has no non-zero nilpotents under the assumption of the theorem. The same is then true for the algebra $\End_{\oX_{\Kinf},\Higgs}\Hig^{\log}(\bL)$. This gives us a contradiction.
\end{proof}

\begin{cor}\label{main th as cor}
Let $X$ be a smooth algebraic variety over $K$ and $\bL$ be an etale $\obQ_p$-etale local system such that $\End_{X_{\oK}}\bL=\obQ_p\cdot \Id$. Then there exists a character $\chi:G_K\to \obQ_p^{\times}$ such that $\chi\otimes\bL$ is de Rham.
\end{cor}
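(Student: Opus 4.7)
The plan is to chain together Theorem \ref{ht main} and Theorem \ref{de rham main}, with the only real work being a bookkeeping check between the coefficient fields $\bQ_p$, $E$, and $\obQ_p$. First I would pass from the algebraic setting to the rigid-analytic setting covered by those theorems: since $X$ is a smooth algebraic variety over $K$, Hironaka's resolution theorem furnishes a smooth projective compactification $\oX$ with $\oX \setminus X$ a normal crossings divisor, and analytifying puts us in the situation considered in the previous two sections. The local system $\bL$ analytifies to a $\obQ_p$-local system on $X^{\an}$ which, as noted immediately after the definition of $\obQ_p$-local systems, automatically admits a $\obZ_p$-lattice because it comes from an algebraic variety.

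Next I would apply Theorem \ref{ht main}. The hypothesis $\End_{X_{\oK}}(\bL) = \obQ_p\cdot\Id$ in particular forces $\bL|_{X_{\oK}}$ to be absolutely irreducible, so Theorem \ref{ht main} produces a character $\chi : G_K \to \obQ_p^{\times}$ such that $\bL \otimes \chi$ is Hodge-Tate. By Lemma \ref{senoperator criteria}(ii), all Hodge-Tate weights of $\bL \otimes \chi$ then lie in $\bZ$, which is the integrality hypothesis of Theorem \ref{de rham main}.

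The remaining step is to verify the nilpotent-free endomorphism hypothesis of Theorem \ref{de rham main} for $\bL \otimes \chi$, viewed as a $\bQ_p$-local system (of rank $[E:\bQ_p]\cdot\rk_{\obQ_p}\bL$ after choosing a finite extension $E/\bQ_p$ over which $\chi$ and a descent of $\bL$ are defined). Denote the corresponding $E$-local system by $\bM$. The hypothesis $\End_{X_{\oK}}(\bL) = \obQ_p \cdot \Id$ descends to $\End_{E, X_{\oK}}(\bM) = E$, i.e., $\bM$ is absolutely irreducible over $E$. Viewing $\bM$ as a $\bQ_p$-local system $\bM^{\flat}$, the commutant of the $\pi_1^{\et}(X_{\oK})$-action in the $\bQ_p$-matrix algebra contains $E$ (acting by scalar multiplication) and equals the centralizer of $E$ intersected with the commutant, which by Schur's lemma applied to the $E$-structure is exactly $E$. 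Thus $\End_{\bQ_p, X_{\oK}}(\bM^{\flat}) \otimes_{\bQ_p} C \simeq E \otimes_{\bQ_p} C \simeq \prod_{\tau : E \hookrightarrow C} C$ is a product of fields, in particular contains no nonzero nilpotents. Theorem \ref{de rham main} then gives that $\bM^{\flat}$ is de Rham, which by the definition of de Rhamness for $E$-local systems (and thence for $\obQ_p$-local systems) means exactly that $\bL \otimes \chi$ is de Rham.

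The main obstacle I expect is keeping the coefficient-field bookkeeping straight: one must be careful that absolute irreducibility as a $\obQ_p$-local system really does descend to absolute irreducibility as an $E$-local system and, crucially, that the $\bQ_p$-linear endomorphism algebra of the descended system is exactly $E$ rather than something larger. Everything else is a direct concatenation of the two main theorems of the previous two sections.
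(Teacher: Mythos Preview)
Your proposal is correct and follows exactly the paper's approach: reduce to the rigid-analytic setting via resolution of singularities and analytification, invoke Theorem~\ref{ht main} for the twist, then apply Theorem~\ref{de rham main} to a descent of $\bL\otimes\chi$ to a finite extension $E/\bQ_p$ viewed as a $\bQ_p$-local system. The paper's proof is terser and leaves the verification of the nilpotent-free hypothesis of Theorem~\ref{de rham main} implicit, whereas you spell it out via the double-centralizer argument showing $\End_{\bQ_p,X_{\oK}}(\bM^\flat)=E$; this extra detail is correct and harmless.
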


\begin{proof}
Using resolution of singularities, choose a smooth proper algebraic variety $\oX\supset X$ containing $X$ as a dense open such that $\oX\setminus X$ is a normal crossing divisor and apply the discussion of sections 2-5 to the analytifications of $X$, $\oX$ and $\bL$. We are in a position to do so because $\bL$, being a $\obQ_p$-local system on an algebraic variety, admits a $\obZ_p$-lattice. Theorem \ref{ht main} applied to $\bL$ provides the character $\chi$ and   Theorem \ref{de rham main} applied to a descent of $\bL\otimes\chi$ to a finite extension $E$ of $\bQ_p$ viewed as a $\bQ_p$-local system gives the result.
\end{proof}

\section{Assumptions in the relative Fontaine-Mazur conjecture}

For the duration of this section let $F$ be a number field and $\fX$ be a smooth scheme over $\cO_{F,S}$ where $S$ is a finite set of places of $F$. Denote by $X$ the generic fiber $\fX_F$. We show here that the first condition of the relative Fontaine-Mazur conjecture is automatic for geometrically irreducible local systems. Together with Corollary \ref{main th as cor} this completes the proof of Theorem \ref{fm intro}. I have learned about this fact from Daniel Litt. The proof, in the case of curves, appears in Step 2 of the proof of \cite[Theorem 1.1.3]{litt}. We include here (an essentially equivalent) proof for the sake of completeness. A very similar argument also appears in \cite[Proposition 4.1]{lz}.

\begin{pr}[D. Litt]\label{automatically unramified}
For any $\obQ_p$-local system $\bL$ on $X$ such that $\End_{X_{\oF}}\bL|_{X_{\oF}}=\obQ_p\cdot \Id$ there exists a finite set $S'\supset S$ such that $\bL$ extends to a $\obQ_p$-local system on $\fX_{\cO_{F,S'}}$.
\end{pr}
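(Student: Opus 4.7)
My plan is to follow the approach alluded to in the excerpt (and appearing in \cite{litt}): first use the mod-$\lambda$ reduction to spread out an initial finite étale cover, which reduces the question to a pro-$p$ situation, and then exploit geometric irreducibility to force the Zariski closure of the image of the geometric fundamental group to be reductive, which gives enough rigidity to bound the bad primes via Hermite--Minkowski.

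Since $\bL$ comes from an algebraic variety it admits a $\obZ_p$-lattice, so it corresponds to a continuous representation $\rho\colon \pi_1^{\et}(X,\ox)\to GL_d(\cO_E)$ for some finite extension $E/\bQ_p$ with uniformizer $\lambda$. The reduction $\bar\rho\bmod\lambda$ has image in the finite group $GL_d(k_E)$, hence corresponds to a finite Galois étale cover $Y\to X$. Taking the normalization of $\fX_{\cO_{F,S}}$ in $Y$, the non-étale locus is Zariski-closed and disjoint from the generic fibre (as $Y\to X$ is étale), so it is supported on finitely many closed fibres of $\Spec\cO_{F,S}$. Consequently $Y$ extends to a finite étale cover $\fY\to\fX_{\cO_{F,S_0}}$ for some finite $S_0\supset S$. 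After replacing $X$ by $Y$, $\fX$ by $\fY$, and $\rho$ by its restriction to $\pi_1^{\et}(Y)$, I may assume $\rho$ factors through the pro-$p$ principal congruence subgroup $1+\lambda M_d(\cO_E)$.

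Let $\mathbf{H}\subset GL_{d,E}$ be the Zariski closure of $\rho(\pi_1^{\et}(X_{\oF}))$. The assumption $\End_{X_{\oF}}\bL|_{X_{\oF}}=\obQ_p\cdot\Id$ says that $\mathbf{H}$ acts irreducibly on $E^d$, so a nontrivial unipotent radical of $\mathbf{H}^\circ$ would produce a nonzero proper $\mathbf{H}$-invariant subspace; hence $\mathbf{H}^\circ$ is reductive. The conjugation action of $\pi_1^{\et}(X)$ preserves $\mathbf{H}^\circ$ and induces a continuous homomorphism
$$G_F \;=\; \pi_1^{\et}(X,\ox)/\pi_1^{\et}(X_{\oF},\ox) \;\longrightarrow\; \mathrm{Out}(\mathbf{H}^\circ).$$
Because $\mathbf{H}^\circ$ is reductive, $\mathrm{Out}(\mathbf{H}^\circ)$ is finite, so this homomorphism factors through a finite quotient $\widetilde G$ of $G_F$, corresponding by Hermite--Minkowski to a finite Galois extension of $F$ unramified outside some finite $S_1\supset S_0$. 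Passing to the open subgroup of $\pi_1^{\et}(X)$ on which the outer action is trivial, $\rho$ lands in the reductive algebraic subgroup $\mathbf{H}^\circ\cdot Z_{GL_d}(\mathbf{H}^\circ)$.

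The main obstacle is the last step: extracting from this situation an actual extension of $\rho$ over some $\fX_{\cO_{F,S'}}$ with $S'\supset S_1$. I expect to handle this by applying the pro-$p$ spreading-out argument again inside the reductive group $\mathbf{H}^\circ\cdot Z_{GL_d}(\mathbf{H}^\circ)$: its pro-$p$ subgroups are topologically finitely generated $p$-adic Lie groups, and once the outer $G_F$-action is trivial the relevant finite-index covers are controlled by the action on finitely many topological generators of $\pi_1^{\et}(X_{\oF})$, each of which can individually be spread out over finitely many additional places. The union of all these, together with $S_1$, yields the desired $S'$.
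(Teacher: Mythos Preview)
Your approach diverges significantly from the paper's, and there are two genuine gaps.

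First, the assertion that $\mathrm{Out}(\mathbf{H}^\circ)$ is finite because $\mathbf{H}^\circ$ is reductive is not correct. This holds for semisimple groups, but a reductive group may have a positive-dimensional central torus $T$, and $\mathrm{Out}(T)\simeq GL_r(\bZ)$ is infinite once $r\geq 2$. What is true is that the automorphisms arising from conjugation inside $GL_d$ must permute the finitely many weights of $\mathbf{H}^\circ$ on $E^d$, so the \emph{image} of $G_F$ in $\mathrm{Out}(\mathbf{H}^\circ)$ is finite; but you need to argue this, not the false blanket statement.

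Second, and more seriously, the last paragraph does not constitute an argument. Landing in the group $\mathbf{H}^\circ\cdot Z_{GL_d}(\mathbf{H}^\circ)$ gives no control over the ramification of $\rho$ at places of $F$: the question is whether $\rho$ factors through $\pi_1^{\et}(\fX_{\cO_{F,S'}})$, and nothing you have done bounds the set of places at which the finite covers trivializing $\rho\bmod\lambda^n$ fail to extend. The phrase ``each of which can individually be spread out over finitely many additional places'' is unclear (generators of $\pi_1^{\et}(X_{\oF})$ are not objects one spreads out), and in any case would give a set of places depending on $n$, not a uniform $S'$.

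The paper's proof supplies precisely the missing geometric input: the specialization theorem for tame fundamental groups (SGA1, or Lieblich--Olsson in the non-proper case) shows that for $v\notin S$ of residue characteristic $l$, the $G_{F_v}$-action on $\pi_1^{\et}(X_{\oF})^{(l')}$ is unramified. Once one arranges that $\rho$ factors through the prime-to-$l$ quotient (by excluding the finitely many $l$ dividing $|GL_d(\cO_E/p^2)|$), it follows that $\rho_x(I_v)$ centralises the geometric image. Geometric irreducibility then forces $\rho_x(I_v)$ into the scalars, and those scalars are pinned down by the determinant character $\det\rho_x$ (ramified at only finitely many places) together with residual triviality. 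This directly produces the uniform finite $S'$.
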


\begin{proof}
Any $\obQ_p$-local system descends to an $E$-local system for some finite extension $E$ of $\bQ_p$ and we can choose an $\cO_E$-lattice in this descent. We may therefore work with an $\cO_E$-local system $\bL$ such that $\End_{X_{\oF}}\bL|_{X_{\oF}}[\ip]=E\cdot \Id$. Note that it is enough to prove the claim for $X$ and $F$ replaced with $X_{F'}$ and $F'$ for some finite field extension $F'\supset F$. Hence, we may assume that $X$ admits an $F$-point $x:\Spec F\to X$ and we will pick a geometric point $\ox$ above $x$. Denote the representation corresponding to $\bL$ by $\rho:\pi_1^{\et}(X,\ox)\to GL_d(\cO_E)$.  Restriction $x^*\bL$ can be viewed as a Galois representation $\rho_{x}:G_F\to GL_d(\cO_E)$. Replace again $F$ by a finite extension to make the residual representation $\overline{\rho}_{x}:G_F\to GL_d(\cO_E/p^2)$ trivial.  Also, pick a smooth proper compactification $\oX\supset X$ such that $D=\oX\setminus X$ is a normal crossings divisor. After enlarging $S$ we may assume that $\oX$ has a smooth proper model $\ofX$ over $\cO_{F,S}$ such that $D$ extends to a horizontal normal crossings divisor $\fD\subset\ofX$ with $\fX=\ofX\setminus\fD$ and, moreover, $x$ extends to a point in $\fX(\cO_{F,S})$. 

Let now $S'$ be the union of $S$ with the set of places of $F$ such that (i) residue characteristic of $v$ divides the order of the finite group $GL_d(\cO_E/p^2)$ or (ii) the character $\det\rho_{x}$ of the Galois group $G_F$ is ramified at $v$. The set of places satisfying the second condition is finite by \cite[Corollary III-2.2]{serre} so $S'$ is finite. We will now prove that the local system $\bL$ extends to a local system on $\fX_{\cO_{F,S'}}$. 

First, we will show that the Galois representation $\rho_{x}$ is unramified at all places $v$ outside of $S'$. Let $v$ be any such place and $l$ be its residue characteristics. The prime $l$ does not divide the order of any of the groups $GL_d(\cO_E/p^n)$ for $n\in\bN$ by assumption so the representation $\rho: \pi_1^{\et}(X,\ox)\to GL_d(\cO_E)$ factors through the prime-to-$l$ quotient of $\pi_1^{\et}(X,\ox)$ which we will denote by $\pi_1^{\et}(X,\ox)^{(l')}$. The action of $G_F$ on $\pi_1(X_{\oF}, \ox)^{(l')}$ is unramified at $v$ by \cite[Corollary A.12]{lieblicholsson}, thanks to the fact that we assumed the existence of good compactification $\fX$ for $\fX$.  In other words, the image of the inertia subgroup $I_v\subset G_{F_v}$ inside $\pi_1^{\et}(X,\ox)^{(l')}$ commutes with the image of $\pi_1^{\et}(X_{\oF},\ox)^{(l')}\to \pi_1^{\et}(X,\ox)^{(l')}$. It means that $\rho_x(I_{F_v})$ commutes with every element of $\rho(\pi_1^{\et}(X_{\oF},\ox))$. By our assumption $\End_{X_{\oF}}\bL|_{X_{\oF}}[\ip]=E\cdot \Id$ every element of $GL_d(\cO_E)$ commuting with $\rho(\pi_1^{\et}(X_{\oF},\ox))$ has to be a scalar multiple of identity so for every $g\in I_{F_v}$ we have $\rho_x(g)=\lambda\cdot \Id$ for some $\lambda\in \cO_E^{\times}$. By our preparatory assumption $\lambda\equiv 1$ modulo $p^2$ and by the definition of $S'$ the power $\lambda^d$ is equal to $1$. This implies that $\lambda$ is in fact equal to $1$ so $\rho_x(I_{F_v})=\{\Id\}$ and $\rho_x$ is unramified at all places outside of $S'$. 

This now forces $\bL$ to extend to $\fX_{\cO_{F,S'}}$: let $F_n$ be the finite extension of $F$ trivializing $x^*(\bL/p^n)$, by the above it is unramified at all $v\not\in S'$ and to show that $\bL/p^n$ extends from $X$ to $\fX_{\cO_{F,S'}}$ it is enough to do so after the base change to $\cO_{F_n,S'}$. This extension exists by \cite[Lemma 57.31.7]{stacks}.
\end{proof}

Despite the fact that it is not clear whether for a geometrically irreducible $\bL$ on $X$ there always exists a character $\chi:G_F\to\obQ_p^{\times}$ of the global Galois group such that $\bL\otimes\chi$ is de Rham at all places above $p$, Moritz Kerz observed that one can still conclude from the relative Fontaine-Mazur conjecture that the underlying geometric local system comes from geometry.

\begin{lm}[M. Kerz]\label{square trick}
Assume that Conjecture \ref{fm original} holds for local systems on the variety $X\times X$. Then for any $\obQ_p$-local system $\bL$ on $X$ such that $\bL|_{X_{\oF}}$ is irreducible the local system $\bL|_{X_{\oF}}$ comes from geometry.
\end{lm}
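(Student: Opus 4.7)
The idea is that although we cannot a priori globally twist $\bL$ by a character $\chi : G_F \to \obQ_p^\times$ to ensure hypothesis (2) of Conjecture \ref{fm original}, the local system $\bM := \bL \boxtimes \bL^\vee$ on $X \times X$ is insensitive to any such twist. Indeed, for any character $\chi$ of $G_F$ both $p_1^*\chi$ and $p_2^*\chi$ agree with the pullback of $\chi$ along the structural morphism $X \times X \to \Spec F$, so $\chi \boxtimes \chi^{-1}$ is the trivial local system and hence $(\bL \otimes \chi) \boxtimes (\bL \otimes \chi)^\vee = \bM$. Applying Corollary \ref{main th as cor} at each place $v$ above $p$ yields a local character $\chi_v : G_{F_v} \to \obQ_p^\times$ such that $\bL \otimes \chi_v$ is de Rham on $X_{F_v}$; consequently
\[
\bM|_{(X \times X)_{F_v}} = (\bL \otimes \chi_v) \boxtimes (\bL \otimes \chi_v)^\vee
\]
is de Rham as an external tensor product of de Rham local systems.

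The remaining two hypotheses of Conjecture \ref{fm original} for $\bM$ follow from geometric irreducibility of $\bL$. A standard Schur-type argument shows that the external tensor product of two absolutely irreducible $\obQ_p$-representations of $\pi_1^{\et}(X_{\oF}, \ox)$ is irreducible as a representation of the product $\pi_1^{\et}(X_{\oF}) \times \pi_1^{\et}(X_{\oF}) = \pi_1^{\et}((X \times X)_{\oF})$, so $\bM|_{(X \times X)_{\oF}}$ is irreducible and in particular $\bM$ is irreducible over $F$. Moreover, since $\End_{X_{\oF}}(\bL) = \obQ_p \cdot \Id$, Proposition \ref{automatically unramified} produces a finite set $S' \supset S$ over which $\bL$, and hence $\bM$, extends to a local system on the smooth integral model $(\fX \times \fX)_{\cO_{F,S'}}$.

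With the hypotheses verified, the assumed instance of Conjecture \ref{fm original} provides a dense open $U \subset X \times X$ and a morphism $\pi : Y \to U$ such that $\bM|_U$ is a subquotient of $R^i\pi_*\obQ_p(j)$ for some $i, j$. Base change everything to $\oF$, shrink $U$ so that $R\pi_*\obQ_p$ is constructible and generic base change holds along the fibres of the first projection $p_1 : U \to X$, and pick a generic closed point $x_0 \in X(\oF)$; writing $i_{x_0} : X_{\oF} \to (X \times X)_{\oF}$ for the slice $y \mapsto (x_0, y)$ and $V := i_{x_0}^{-1}(U_{\oF})$, this $V$ is a dense open in $X_{\oF}$. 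Because $\bL_{x_0}$ carries the trivial action of $\pi_1^{\et}(X_{\oF})$ as a $\obQ_p$-vector space of dimension $d := \rk_{\obQ_p}\bL$, the pullback $i_{x_0}^*\bM$ is isomorphic to $(\bL^\vee|_{X_{\oF}})^{\oplus d}$. Restricting the subquotient relation along $i_{x_0}$ and using base change to identify $i_{x_0}^*(R^i\pi_*\obQ_p(j))|_V$ with $R^i\pi'_*\obQ_p(j)$ for the pulled-back family $\pi' : Y \times_U V \to V$, we deduce that $\bL^\vee|_V$ is a subquotient of $R^i\pi'_*\obQ_p(j)$; hence $\bL^\vee|_{X_{\oF}}$, and therefore $\bL|_{X_{\oF}}$ (by dualising a subquotient), comes from geometry.

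The only serious technical point is the base change step along the slice $i_{x_0}$: one must shrink $U$ appropriately before choosing $x_0$ so that the higher direct image commutes with pullback to the slice. This is routine via generic constructibility, but requires bookkeeping to ensure the various dense opens intersect as needed; everything else is formal once the twist-invariance of $\bL \boxtimes \bL^\vee$ has been observed.
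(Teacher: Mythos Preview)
Your proof is correct and follows the same route as the paper's: form $\bL\boxtimes\bL^{\vee}$ on $X\times X$, verify the hypotheses of Conjecture~\ref{fm original} via Corollary~\ref{main th as cor} and Proposition~\ref{automatically unramified}, then restrict the resulting geometric subquotient to a slice over $\oF$. You supply more detail than the paper on the geometric irreducibility of $\bM$ and on the base-change issue along the slice, both of which the paper leaves implicit; your slice (fixing the first coordinate) produces $(\bL^{\vee})^{\oplus d}$ rather than $\bL^{\oplus d}$, but this is an immaterial difference.
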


\begin{proof}
Denote by $p_1,p_2:X\times X\to X$ the two projections and consider the local system $\bL\boxtimes\bL^{\vee}:=p_1^*\bL\otimes p_2^*\bL^{\vee}$. By Corollary \ref{main th as cor} the local system $(\bL\boxtimes\bL^{\vee})|_{X_{F_v}}$ is de Rham for every place $v$ above $p$ and $\bL\boxtimes\bL^{\vee}$ extends to a local system on $(\fX\times \fX)_{\cO_{F,S'}}$ for some finite set of places $S'$ by Proposition \ref{automatically unramified}. The assumptions of Conjecture \ref{fm original} are therefore satisfied for $\bL\boxtimes\bL^{\vee}$ so there exists an open $U\subset X$ such that $(\bL\boxtimes\bL^{\vee})|_U$ appears as a subquotient of the sheaf $R^i\pi_*\obQ_p(j)$ for some morphism $\pi:Y\to U$. 

Choose a point $x\in X(\oF)$ such that the intersection of $U$ with $\{x\}\times X$ is non-empty. Then the restriction $i_x^*(\bL\boxtimes \bL^{\vee})$ of the local system $(\bL\boxtimes\bL^{\vee})_{(X\times X)_{\oF}}$ along the inclusion $i_x:X_{\oF}=\{x\}\times X_{\oF}\to (X\times X)_{\oF}$ comes from geometry. However, this restriction is isomorphic to $\bL|_{X_{\oF}}\otimes \bL^{\vee}_x\simeq \bL_{X_{\oF}}^{\rk\bL}$ so the local system $\bL|_{X_{\oF}}$ on $X_{\oF}$ comes from geometry.
\end{proof}

\section{Rigidity for Hodge-Tate local systems on algebraic varieties}

For this section we go back into the setup of Section \ref{section2}: $X$ is a smooth geometrically connected rigid-analytic variety over $K$ with a smooth proper compactification $\oX\supset X$ such that $\oX\setminus X$ is a normal crossings divisor, the main example being provided by the analytification of a smooth algebraic variety.

\begin{lm}\label{vanishing rigidity}
If $\bL$ is a $\bQ_p$-local system on $X$ that admits a $\bZ_p$-lattice then any endomorphism $f:\Hig^{\log}(\bL)\to \Hig^{\log}(\bL)$ commuting with the Higgs field that vanishes at one closed point $x\in X_{\Kinf}$ has to be identically zero.
\end{lm}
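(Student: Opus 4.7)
The strategy is to translate the Higgs endomorphism $f$ into an endomorphism of the local system $\bL|_{X_{\oK}}$ via Proposition~\ref{main decompleted rh}(iii), and then to exploit the fact that endomorphisms of a local system on a connected space are detected by their value on any single fiber. Since $X$ is geometrically connected, $X_{\oK}$ is connected, so for any geometric point $\ox$ of $X_{\oK}$ the restriction map
$$\End_{X_{\oK}}(\bL) \hookrightarrow \End(\bL_{\ox})$$
is injective, because it identifies endomorphisms with $\pi_1^{\et}(X_{\oK},\ox)$-equivariant endomorphisms of the fiber. Tensoring with $C$ over $\bQ_p$ preserves injectivity by flatness.

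Carrying out the plan: given $f \in \End_{\oX_{\Kinf},\Higgs}(\Hig^{\log}(\bL))$, its scalar extension $f\otimes 1$ along $\Kinf\to C$ corresponds, under the isomorphism of Proposition~\ref{main decompleted rh}(iii) (available since $\bL$ admits a $\bZ_p$-lattice), to some element
$$g \in \End_{X_{\oK}}(\bL) \otimes_{\bQ_p} C.$$
The closed point $x\in X_{\Kinf}$ gives a classical point of $X$, since $X_{\Kinf}$ and $X$ share the same underlying topological space; choose a geometric point $\ox$ of $X$ above $x$. The naturality of the fiber isomorphism of Theorem~\ref{liu zhu rh}(v)
$$\cH^{\log}(\bL)_x \otimes_{\widehat{k(x)(\mu_{p^{\infty}})}} C \simeq \bL_{\ox} \otimes_{\bQ_p} C$$
should identify the fiber of $f$ at $x$ (after base change to $C$) with the evaluation of $g$ at $\ox$, viewed inside $\End(\bL_{\ox})\otimes_{\bQ_p} C$ via the injection above. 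The hypothesis $f_x=0$ then forces the image of $g$ in $\End(\bL_{\ox})\otimes_{\bQ_p} C$ to vanish. By injectivity $g=0$, so $f\otimes 1=0$, and faithful flatness of $C/\Kinf$ yields $f=0$.

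The delicate point I am relying on is precisely the compatibility just invoked: the global identification of Proposition~\ref{main decompleted rh}(iii),
$$\End_{\oX_{\Kinf},\Higgs}(\Hig^{\log}(\bL)) \otimes_{\Kinf} C \simeq \End_{X_{\oK}}(\bL) \otimes_{\bQ_p} C,$$
must intertwine evaluation at $x$ on the left with evaluation at $\ox$ on the right, modulo the fiber isomorphism of Theorem~\ref{liu zhu rh}(v). This is a naturality statement that ought to be built into the constructions of \cite{lz,dllz,kht}, because both sides are obtained by taking Galois invariants of (appropriate extensions by) the Hodge--Tate period sheaf applied to $\bL\otimes\bL^\vee$, and restriction to the stalk at $x$ is functorial in the local system. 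Nonetheless, verifying this compatibility against the explicit definitions in those references is the principal piece of bookkeeping required to complete the argument.
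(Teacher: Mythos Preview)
Your proposal is correct and is essentially the same argument as the paper's. The paper packages the compatibility you flag as a commutative square
\[
\begin{tikzcd}
\End_{\oX_{\Kinf},\Higgs}\Hig^{\log}(\bL)\otimes_{\Kinf}C\arrow[d]\arrow[r,"\sim"] & \End_{X_{\oK}}\bL|_{X_{\oK}}\otimes_{\bQ_p}C\arrow[d]\\
\End_{\Kinf}\Hig^{\log}(\bL)_x\otimes_{\Kinf}C\arrow[r,"\sim"] & \End_{\bZ_p}\bL_{\ox}\otimes_{\bQ_p}C
\end{tikzcd}
\]
citing Theorem~\ref{liu zhu rh}(v) and Proposition~\ref{main decompleted rh}, and then observes that the right vertical arrow is injective because $\bL\otimes\bL^{\vee}$ is a local system on a connected space---exactly your injectivity step. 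Your honest flagging of the commutativity as ``the principal piece of bookkeeping'' matches the paper's level of justification: it simply asserts the diagram commutes by functoriality of the period-sheaf constructions.
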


\begin{proof}
By Theorem \ref{liu zhu rh} (v) and Proposition \ref{main decompleted rh} (ii) we have a commutative diagram

\[
\begin{tikzcd}
\End_{\oX_{\Kinf},\Higgs}\Hig^{\log}(\bL)\otimes_{\Kinf}C\arrow[d]\arrow[r,"\sim"] & \End_{X_{\oK}}\bL|_{X_{\oK}}\otimes_{\bQ_p}C\arrow[d]\\
\End_{\Kinf}\Hig^{\log}(\bL)_x\otimes_{\Kinf}C\arrow[r,"\sim"] & \End_{\bZ_p}\bL_{\ox}\otimes_{\bQ_p}C
\end{tikzcd}
\]

As $\bL\otimes_{\bZ_p}\bL^{\vee}$ is a local system the right vertical arrow is injective, so the left one is injective as well.

\end{proof}

\begin{pr}\label{ht locally free}
For any $\bQ_p$-local system on $X$ that admits a $\bZ_p$-lattice the associated coherent Higgs sheaf $D_{\HT}(\bL)$ is locally free.
\end{pr}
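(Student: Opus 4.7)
The plan is to rewrite $D_{\HT}(\bL)|_{X_{\Kinf}}$ through Shimizu's decompletion as the kernel of a single nilpotent Higgs endomorphism, and then use Lemma \ref{vanishing rigidity} together with a wedge-power argument to show that this kernel is locally free.

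First I would express $D_{\HT}(\bL)|_{X_{\Kinf}}$ in terms of the Sen endomorphism $\phi$ on $\Hig^{\log}(\bL)$. Using Proposition \ref{vb decompletion} and the identification $\gr^i\cR\cH^{\log}(\bL) \simeq \cH^{\log}(\bL)(i)$, the $\Gamma$-invariants of $\cH^{\log}(\bL)(i)$ correspond after decompletion to the kernel of the Sen operator on the twist, which in the identification $\cH^{\log}(\bL)(i)\simeq\cH^{\log}(\bL)$ becomes $\phi + i$. Summing over $i$ and reindexing gives
\[
D_{\HT}(\bL)|_{X_{\Kinf}} \simeq \bigoplus_{n\in\bZ}\ker\bigl(\phi - n\colon \Hig^{\log}(\bL)|_{X_{\Kinf}}\to\Hig^{\log}(\bL)|_{X_{\Kinf}}\bigr).
\]
After passing to a finite extension of $K$ (harmless by faithfully flat descent of local freeness), the characteristic polynomial of $\phi$ splits, and the generalized eigenspace decomposition recalled before Theorem \ref{ht main} yields $\Hig^{\log}(\bL)=\bigoplus_\lambda \bL_\lambda$ with each $\bL_\lambda$ locally free. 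Since $\theta\circ\phi=(\phi-1)\circ\theta$, the Higgs field shifts $\phi$-eigenvalues by $+1$, so grouping by $\bZ$-cosets gives $\Hig^{\log}(\bL)=\bigoplus_{a\in\oK/\bZ} V_a$ with each $V_a=\bigoplus_{\lambda\equiv a}\bL_\lambda$ a Higgs direct summand. Define $N\colon V_0\to V_0$ by $N|_{\bL_n}=\phi - n$; then $N$ is nilpotent, and the relation $\theta\circ\phi=(\phi-1)\circ\theta$ rearranges into $\theta N_n = N_{n+1}\theta$, showing that $N$ commutes with $\theta$. Since $\phi-n$ is invertible on $\bL_{n'}$ for $n'\ne n$ and on every $V_a$ with $a\notin\bZ$, the above sum collapses to $\ker(N\colon V_0\to V_0)$, reducing the proposition to the local freeness of $\ker N$ on $X_{\Kinf}$.

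It remains to show that $N$ has constant fiber rank. Let $r$ be the generic rank of $N$, which is well-defined since $X_{\Kinf}$ is irreducible (as $X$ is smooth and geometrically connected). By the monoidality of $\Hig^{\log}|_{X_{\Kinf}}$ from Theorem \ref{liu zhu rh}(ii), the wedge power $\wedge^r N$ is a Higgs endomorphism of $\wedge^r V_0 \subset \wedge^r\Hig^{\log}(\bL)\simeq \Hig^{\log}(\wedge^r\bL)|_{X_{\Kinf}}$; since $V_0$ is a Higgs direct summand of $\Hig^{\log}(\bL)$, extending by zero on the complement produces a Higgs endomorphism $\widetilde{N}$ of all of $\Hig^{\log}(\wedge^r\bL)$, nonzero by the choice of $r$. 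As $\wedge^r\bL$ admits a $\bZ_p$-lattice because $\bL$ does, Lemma \ref{vanishing rigidity} forbids $\widetilde{N}$ from vanishing at any closed point of $X_{\Kinf}$, giving $\rk(N_x)\geq r$ everywhere. Combined with $\wedge^{r+1}N=0$ (which holds on the dense open where the rank of $N$ is maximal, and thus globally because it is a morphism of locally free sheaves on the reduced irreducible space $X_{\Kinf}$), we conclude that $N$ has constant rank $r$. Hence $\ker N$ is a subbundle of $V_0|_{X_{\Kinf}}$, and faithfully flat descent along $\cO_X\to\cO_{X_{\Kinf}}$ yields local freeness of $D_{\HT}(\bL)$ on $X$.

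The main technical subtlety is setting up the wedge-power trick so that Lemma \ref{vanishing rigidity} — which is stated for endomorphisms of $\Hig^{\log}(\bL)$ itself — can be leveraged against the nilpotent part of $\phi$; once $\wedge^r N$ is recognised as (a direct summand of) a Higgs endomorphism of $\Hig^{\log}(\wedge^r\bL)$, the rigidity kicks in cleanly. A secondary point of care is the initial descent of the generalized eigenspace decomposition from a sufficiently large finite extension back to $K$, but this is automatic by faithfully flat descent of local freeness.
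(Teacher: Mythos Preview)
Your argument is correct and follows essentially the same route as the paper: reduce via \cite[Proposition~5.3]{kht} to showing that $\bigoplus_{i\in\bZ}\Hig^{\log}(\bL)^{\phi=i}$ is locally free, pass to the generalized eigenbundle decomposition of $\phi$, build a nilpotent Higgs endomorphism out of the operators $\phi-\lambda$, and then use exterior powers together with Lemma~\ref{vanishing rigidity} to force constant rank. The only cosmetic difference is that the paper defines its nilpotent endomorphism $f$ on all of $\Hig^{\log}(\bL)$ at once (setting $f=\phi-\lambda$ on every $\Hig^{\log}(\bL)_\lambda$, integral or not), so that $\Lambda^m f$ is directly a Higgs endomorphism of $\Lambda^m\Hig^{\log}(\bL)$ and no ``extend by zero'' step is needed; your restriction to $V_0$ first is equivalent but adds a small detour.
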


\begin{proof}
By \cite[Proposition 5.3]{kht} this statement amounts to the fact that $\bigoplus\limits_{i\in\bZ}\Hig^{\log}(\bL)^{\phi=i}|_{X_{\Kinf}}$ is locally free. Assume, by enlarging $K$, that all Hodge-Tate weights of $\bL$ belong to $K$. Let $\Hig^{\log}(\bL)=\bigoplus \Hig^{\log}(\bL)_{\lambda} $ be the generalized eigenbundle decomposition associated to the endomorphism $\phi$. Denote by $f$ the endomorphism of $\Hig^{\log}(\bL)$ defined by $\phi-\lambda$ on the component $\Hig^{\log}(\bL)_{\lambda}$. It commutes with the Higgs field because of the identity $\theta\circ\phi=(\phi-1)\circ \theta$. To prove that $\bigoplus\limits_{i\in\bZ}\Hig(\bL)^{\phi=i}|_{X_{\Kinf}}$ is locally free, it is enough to show that the rank of $f$ is constant across $X_{\Kinf}$. Equivalently, we need to prove that, if for some $m\in\bN$ the exterior power $\Lambda^m f\in \End_{\oX_{\Kinf},\Higgs}(\Lambda^m\Hig^{\log}(\bL))$ vanishes at a point $x\in X_{\Kinf}$, then $\Lambda^m f$ is identically zero. This follows from Lemma \ref{vanishing rigidity}. 
\end{proof}

\begin{cor}\label{ht rigidity main}
Suppose that $\bL$ is a $\bQ_p$-local system on $X$ admiting a $\bZ_p$-lattice. If for one classical point $x\in X$ the restriction $\bL_{\ox}$ is a Hodge-Tate representation of $\Gal(\overline{k(x)}/k(x))$ then $\bL$ is Hodge-Tate. In particular, for any other closed point $y\in X$ the Galois representation $\bL_{\oy}$ is Hodge-Tate as well.
\end{cor}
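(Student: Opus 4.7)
The plan is to combine the constancy of Hodge-Tate weights given by Proposition \ref{main decompleted rh}(ii) with the rigidity assertion of Lemma \ref{vanishing rigidity}, exactly as was done for the local freeness of $D_{\HT}(\bL)$, to upgrade the Hodge-Tate condition at a single fiber to a global statement. By Lemma \ref{senoperator criteria}(ii), it suffices to prove that the Sen endomorphism $\phi$ of $\Hig^{\log}(\bL)$ is semisimple with eigenvalues in $\bZ$.

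First I would observe that by Proposition \ref{main decompleted rh}(ii) the characteristic polynomial $\det(T\cdot\Id-\phi)$ has coefficients in $K$, so its roots (as a multiset in $\oK$) are the same at every point of $\oX_{\Kinf}$. Under the identification furnished by Theorem \ref{liu zhu rh}(v) and Proposition \ref{main decompleted rh}(iii), the fiber $\Hig^{\log}(\bL)_x$ together with the restriction of $\phi$ recovers the decompleted Sen module of the Galois representation $\bL_{\ox}$ together with its Sen operator. Since $\bL_{\ox}$ is assumed Hodge-Tate, its Sen operator is semisimple with integer eigenvalues, so by constancy the global eigenvalues of $\phi$ all lie in $\bZ$.

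Second, I would mimic the argument from Proposition \ref{ht locally free}. After enlarging $K$ so that all eigenvalues of $\phi$ lie in $K$, consider the generalized eigenspace decomposition $\Hig^{\log}(\bL)=\bigoplus_{\lambda}\Hig^{\log}(\bL)_{\lambda}$ and define $f\in\End(\Hig^{\log}(\bL))$ by $f=\phi-\lambda$ on the summand $\Hig^{\log}(\bL)_{\lambda}$. The relation $\theta\circ\phi=(\phi-1)\circ\theta$ implies that $\theta$ carries $\Hig^{\log}(\bL)_{\lambda}$ into $\Hig^{\log}(\bL)_{\lambda+1}$, and a direct computation then gives $f\circ\theta=\theta\circ f$. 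Because $\phi_x$ is semisimple by the Hodge-Tate hypothesis at $x$, the endomorphism $f$ vanishes on the fiber at $x$. By Lemma \ref{vanishing rigidity}, $f$ is globally zero, hence $\phi$ is semisimple on $\Hig^{\log}(\bL)$. Lemma \ref{senoperator criteria}(ii) then concludes that $\bL$ is Hodge-Tate.

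The last sentence of the corollary is immediate: for any closed point $y\in X$, the Sen operator on $\bL_{\oy}$ is the restriction of the globally semisimple, integer-eigenvalued $\phi$ to the fiber at $y$, and these properties are clearly preserved under restriction. The main obstacle is justifying that $(\Hig^{\log}(\bL)_x,\phi_x)$ really does encode the decompleted Sen module and Sen operator of $\bL_{\ox}$; this is where Theorem \ref{liu zhu rh}(v), the decompletion result of Proposition \ref{vb decompletion}, and the assumption that $\bL$ admits a $\bZ_p$-lattice are crucial.
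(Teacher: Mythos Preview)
Your proof is correct and follows essentially the same route as the paper's: both deduce integrality of the Hodge--Tate weights from constancy of the characteristic polynomial, form the nilpotent part $f$ of $\phi$ (given by $\phi-\lambda$ on each generalized eigenbundle), observe that it commutes with the Higgs field, and use the rigidity of Lemma~\ref{vanishing rigidity} to propagate $f_x=0$ to $f=0$ globally. The only cosmetic difference is that the paper phrases the last step by pointing back to the rank-constancy argument in the proof of Proposition~\ref{ht locally free}, whereas you apply Lemma~\ref{vanishing rigidity} to $f$ directly; your formulation is if anything slightly cleaner. (Note also that your sentence ``After enlarging $K$ so that all eigenvalues of $\phi$ lie in $K$'' is harmless but redundant, since by that point you have already shown the eigenvalues are integers.)
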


\begin{proof}
By the assumption, all Hodge-Tate weights of $\bL$ belong to $\bZ$. The proof of Proposition \ref{ht locally free} shows that for every $i\in\bZ$ the rank of $\phi-i$ is constant across $X$. Since the restriction of $\phi-i$ to $x$ annihilates all of the generalized eigenspace corresponding to the weight $i$, the operator $\phi-i$ does the same globally. Hence $\phi$ is semi-simple with integral eigenvalues.
\end{proof}

\section{Reducible local systems and Galois action on the fundamental group}

In this section, we extend Theorem \ref{main theorem} to local systems that are not necessarily geometrically irreducible by proving the following result. I learned about this generalization and its interpretation in terms of the Galois action on $\pi_1^{\et}(X_{\oK},\ox)$ from Alexander Beilinson. As before, $X$ is a smooth rigid-analytic variety that admits a smooth proper compactification $\oX\supset X$ over a complete discretely valued field $K$ of characteristic zero with a perfect residue field of characteristic $p$.

\begin{thm}\label{main reducible}
For any $\obQ_p$-local system $\bL$ on $X$ there exists a de Rham local system $\bM$ on $X$ such that there is an inclusion $\bL|_{X_{\oK}}\subset \bM|_{X_{\oK}}$ of local systems on $X_{\oK}$.
\end{thm}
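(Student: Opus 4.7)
The plan is to adapt the proof of Theorem \ref{main theorem} by combining the Sen-coset decomposition that appears in the proof of Theorem \ref{ht main} with the de Rhamness criterion of Theorem \ref{de rham main} in a way that survives the absence of geometric irreducibility. First observe that the conclusion is insensitive to finite base change: if $K'/K$ is a finite extension and $\bM'$ is a de Rham local system on $X_{K'}$ with $\bL|_{X_\oK}\subset \bM'|_{X_\oK}$, then the push-forward $\pi_*\bM'$ along $\pi\colon X_{K'}\to X_K$ is again de Rham (finite \'etale push-forward preserves the de Rham property), and its geometric restriction $\bigoplus_\sigma\sigma^*\bM'|_{X_\oK}$ contains $\bM'|_{X_\oK}\supset\bL|_{X_\oK}$ as a direct summand. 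Replacing $K$ by a sufficiently large finite extension, I may assume that the Hodge-Tate-Sen weights of $\bL$ (the eigenvalues of $\phi$ on $\Hig^{\log}(\bL)$, which lie in $\oK$ by Proposition \ref{main decompleted rh}(ii)) all lie in $K$ and that each of their cosets modulo $\bZ$ is individually fixed by $G_K$.

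The second step is to decompose $\bL$ by Sen cosets. Because of the relation $\theta\circ\phi=(\phi-1)\circ\theta$, the Higgs field $\theta$ shifts $\phi$-eigenvalues by $1$; consequently the generalized-eigenspace decomposition of $\phi$ on $\Hig^{\log}(\bL)$, grouped by cosets modulo $\bZ$,
\[
\Hig^{\log}(\bL)=\bigoplus_{c\in\oK/\bZ}\Hig^{\log}(\bL)_c,
\]
is a decomposition of Higgs bundles. The corresponding projectors are $\cO_{\oX_{\Kinf}}$-linear and commute with $\theta$, and by our arrangement they are $G_K$-invariant: they are automatically $H_K$-invariant (being defined over $\Kinf$), and the semi-linear $\Gamma$-action on $\Hig^{\log}(\bL)$ preserves each generalized eigenspace of $\phi$ since it is given by $\exp(\phi\log\chi_{\cycl}(\gamma))$. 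Via Proposition \ref{main decompleted rh}(iii) and Galois descent they therefore arise from central idempotents of $\End_X(\bL)$, yielding a decomposition $\bL=\bigoplus_c\bL_c$ as local systems on $X$ in which $\bL_c$ has Sen weights in the single coset $c$. Using Lemma \ref{hodge-tate: character lemma} to pick for each $c$ a character $\chi_c\colon G_K\to\obQ_p^{\times}$ of Sen weight $-\lambda_c$ for a representative $\lambda_c\in c\cap K$, the twist $\bL_c\otimes\chi_c$ has integer Hodge-Tate-Sen weights and coincides with $\bL_c|_{X_\oK}$ geometrically, so replacing $\bL$ by $\bigoplus_c\bL_c\otimes\chi_c$ reduces the theorem to the case where all Hodge-Tate-Sen weights of $\bL$ are integers.

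For this remaining case I would induct on the length $\ell$ of the canonical geometric socle filtration of $\bL|_{X_\oK}$, which is $\pi_1(X)$-stable by canonicity and so refines to a filtration of $\bL$ on $X$. For $\ell=1$ the local system $\bL|_{X_\oK}$ is semisimple, so $\End_{X_\oK}(\bL)\otimes_{\bQ_p}C$ is a semisimple $C$-algebra with no non-zero nilpotents and Theorem \ref{de rham main} gives that $\bL$ is itself de Rham, so one takes $\bM=\bL$. For $\ell\geq 2$ let $\bS\subset \bL$ be the sub-local system on $X$ whose geometric restriction is the socle of $\bL|_{X_\oK}$, and set $\bQ=\bL/\bS$; both have strictly smaller socle length and integer HT weights, so the inductive hypothesis produces de Rham $\bM_\bS,\bM_\bQ$ on $X$ with $\bS|_{X_\oK}\subset\bM_\bS|_{X_\oK}$ and $\bQ|_{X_\oK}\subset\bM_\bQ|_{X_\oK}$. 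The desired $\bM$ is then built as an extension of $\bM_\bQ$ by $\bM_\bS$ on $X$ obtained from the push-out of the extension class of $\bL|_{X_\oK}$ along $\bS\hookrightarrow\bM_\bS$; since de Rham local systems are closed under extensions, any such $\bM$ is de Rham. I expect the hardest step to be this assembly: the lift of the pushed-out class from $\Ext^1_{X_\oK}(\bQ|_{X_\oK},\bM_\bS|_{X_\oK})$ to $\Ext^1_X(\bM_\bQ,\bM_\bS)$ has an obstruction in a higher Ext group, which one hopes to kill by further enlarging $\bM_\bS$ inside the de Rham category using the freedom built into the inductive hypothesis.
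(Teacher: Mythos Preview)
Both steps of your argument have genuine gaps. In Step 1, the Sen-coset projectors on $\Hig^{\log}(\bL)$ are indeed $\Gamma$-stable, but Proposition \ref{main decompleted rh}(iii) only furnishes an $H_K$-equivariant isomorphism $\End_{\oX_{\Kinf},\Higgs}\Hig^{\log}(\bL)\otimes_{\Kinf}C\simeq\End_{X_{\oK}}(\bL)\otimes_{\bQ_p}C$, and a $G_K$-invariant element of the right-hand side need not lie in $\End_X(\bL)=(\End_{X_{\oK}}\bL)^{G_K}$: for a $G_K$-representation $V$ the space $(V\otimes_{\bQ_p} C)^{G_K}$ is a $K$-vector space that is typically much larger than $V^{G_K}$, and an idempotent of $A\otimes_{\bQ_p}C$ need not come from $A$ once the center of $A$ has residue fields strictly larger than $\bQ_p$ (which is automatic when one views an $E$-local system as a $\bQ_p$-local system). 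So the decomposition $\bL=\bigoplus_c\bL_c$ over $X$ is not established, and without it you cannot perform the piecewise twist. In Step 2 the assembly you yourself flag as ``the hardest step'' is left unresolved: you need a class in $\Ext^1_X(\bM_\bQ,\bM_\bS)$ which, restricted to $X_{\oK}$ and pulled back along $\bQ|_{X_{\oK}}\hookrightarrow\bM_\bQ|_{X_{\oK}}$, recovers the push-out of $[\bL|_{X_{\oK}}]$. Both the contravariant extension along $\bQ\hookrightarrow\bM_\bQ$ and the passage from $X_{\oK}$ to $X$ are obstructed, and ``enlarging $\bM_\bS$ inside the de Rham category'' gives no mechanism to kill these obstructions. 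The neighbouring question of whether one can always choose $\bM$ with $\bM|_{X_{\oK}}\simeq\bL|_{X_{\oK}}$ is in fact left open in the paper for exactly this reason.

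The paper's argument sidesteps both difficulties by a one-shot construction. After passing to a finite extension so that $X$ has a $K$-point $x$, one takes $\cF(\bL)\subset\bL^\vee\otimes_{\obQ_p}\bL_{\ox}$ to be the smallest sublocal system whose stalk at $\ox$ contains $\id_{\bL_{\ox}}$, and sets $\bM=(\cF(\bL)^\vee)^{\oplus\rk\bL}$. Minimality of $\cF(\bL)$ gives a rigidity statement replacing your ``no nilpotents'' hypothesis: any Higgs endomorphism of $\Hig^{\log}(\cF(\bL))$ that annihilates the distinguished element $\id_{\Hig^{\log}(\bL)_x}$ must vanish. Applying this first to the projector onto the non-integral Sen cosets, and then to the nilpotent endomorphism produced from $\phi$ on $\cA(\cF(\bL))$ as in the proof of Theorem \ref{de rham main}, shows in one stroke that $\cF(\bL)$ has integral weights and is de Rham. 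No decomposition of $\bL$, no induction on socle length, and no extension-lifting are needed.
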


\begin{proof}
Note that if the theorem is proven for the base change of $X$ to a finite extension $K'\supset K$ then it follows for the initial $X$ as well by considering the pushforward of $\bM$ along $X_{K'}\to X$. We may therefore assume that $X$ has a $K$-point $x$. Pick also a geometric point $\ox$ above $x$. We will view the stalk $\bL_{\ox}$ as a representation of $G_K$.

The local system $\bM$ can now be defined constructively. Denote by $\cF(\bL)$ the minimal sub-local system of $\bL^{\vee}\otimes_{\obQ_p}\bL_{\ox}$ whose stalk $\cF(\bL)_{\ox}$ contains the identity endomorphism $\id_{\bL_{\ox}}\in \bL^{\vee}_{\ox}\otimes\bL_{\ox}=\End_{\obQ_p}(\bL_{\ox})$. In terms of representations of $\pi_1^{\et}(X,\ox)$, the stalk $\cF(\bL)_{\ox}$ is the $\obQ_p$-span of the image of the geometric fundamental group $\pi_1^{\et}(X_{\oK},\ox)$ inside $\End_{\obQ_p}(\bL_{\ox})$. Note that $\cF(\bL)|_{X_{\oK}}$ is therefore minimal among local systems on $X_{\oK}$ contained in $(\bL^{\vee}\otimes\bL_{\ox})|_{X_{\oK}}$ and containing $\id_{\bL_{\ox}}$ in their stalk at $\ox$; this observation will play the main role in the proof of Proposition \ref{reducible explicit companion} below.

Now define $\bM=(\cF(\bL)^{\vee})^{\oplus\rk\bL}$. By construction, $\cF(\bL)\otimes\bL_x^{\vee}$ surjects onto $\bL^{\vee}$, therefore $\bL|_{X_{\oK}}$ injects into $(\cF(\bL)^{\vee}\otimes\bL_x)|_{X_{\oK}}=\bM|_{X_{\oK}}$. The proof of the theorem now amounts to the following proposition.
\end{proof}

\begin{pr}\label{reducible explicit companion}
For any $\obQ_p$-local system $\bL$, the local system $\cF(\bL)$ defined above is de Rham.
\end{pr}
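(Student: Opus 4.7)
My plan is to adapt the strategy of Theorems \ref{ht main} and \ref{de rham main} to $\cF(\bL)$, replacing the hypothesis of geometric irreducibility ($\End_{X_{\oK}}(\bL) = \obQ_p \cdot \Id$) with the minimality property of $\cF(\bL)|_{X_{\oK}}$ as a sub-local system of $(\bL^{\vee} \otimes \bL_{\ox})|_{X_{\oK}}$ containing $\id_{\bL_{\ox}}$. The distinguished element driving the argument is $\id_{\bL_{\ox}} \in \cF(\bL)_{\ox} \subset \End_{\obQ_p}(\bL_{\ox})$: under the section $G_K \hookrightarrow \pi_1^{\et}(X,\ox)$ coming from $x$, the $G_K$-action on $\End_{\obQ_p}(\bL_{\ox})$ is by conjugation, so $\id_{\bL_{\ox}}$ is $G_K$-fixed. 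Via Theorem \ref{liu zhu rh}(v) applied to $\cF(\bL)$, together with Ax--Sen--Tate ($C^{H_K} = \hkinf$), this produces an element $\tilde{\id} \in \Hig^{\log}(\cF(\bL))_x$ that is $\Gamma$-fixed at $x$, and by the Sen formula of Proposition \ref{vb decompletion} lies in the $\phi = 0$ eigenspace at $x$.

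First I would show that $\cF(\bL)$ is Hodge-Tate. Decompose $\Hig^{\log}(\cF(\bL)) = \bigoplus_{a \in \oK/\bZ} H_a$ into generalized eigen-sub-Higgs-bundles for $\phi$ grouped by cosets modulo $\bZ$; the relation $\theta \phi = (\phi-1)\theta$ of Proposition \ref{main decompleted rh}(i) makes each $H_a$ a sub-Higgs bundle. Via the correspondence between endomorphisms/sub-objects of the Higgs bundle and of the local system (Proposition \ref{main decompleted rh}(iii), together with Theorem \ref{liu zhu rh}(iii) for sub-objects), the $H_a$ correspond to sub-local systems of $\cF(\bL)|_{X_{\oK}}$. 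Since $\tilde{\id}_x \in (H_0)_x$, the $0$-coset summand gives a sub-local system of $\cF(\bL)|_{X_{\oK}} \subset (\bL^\vee \otimes \bL_{\ox})|_{X_{\oK}}$ containing $\id$; minimality forces this summand to equal $\cF(\bL)|_{X_{\oK}}$, so every eigenvalue of $\phi$ is an integer. For semi-simplicity, let $f := \phi - i$ on each generalized eigenspace $H_i$ ($i \in \bZ$); as in the proof of Theorem \ref{ht main}, $f$ is a nilpotent Higgs-bundle endomorphism commuting with $\theta$, and $\tilde{\id}_x \in \ker f$. The sub-local system corresponding to $\ker f$ contains $\id$, so by minimality $\ker f = \Hig^{\log}(\cF(\bL))$, giving $f = 0$ and therefore $\cF(\bL)$ Hodge-Tate by Lemma \ref{senoperator criteria}(ii).

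To upgrade Hodge-Tate to de Rham, I would form $\cA(\cF(\bL))$ via Proposition \ref{mock Ddr} with its nilpotent Sen endomorphism $\phi|_{\cA}$; by Lemma \ref{senoperator criteria}(i) the task reduces to showing $\phi|_{\cA} = 0$. Theorem \ref{de rham main} does not apply directly: a direct computation shows that $\End_{X_{\oK}}(\cF(\bL))$ equals the enveloping algebra $\obQ_p[\rho(\pi_1^{\et}(X_{\oK},\ox))]$ acting by left multiplication on $\cF(\bL)_{\ox}$, which generically contains nilpotents. The strategy is to invoke minimality a third time: a $\phi$-invariant lift of $\tilde{\id}$ to $\cA(\cF(\bL))_x$ lies in $\ker(\phi|_{\cA})$, and this kernel, being closed under the connection and the filtration, should correspond via Riemann--Hilbert to a sub-local system of $\cF(\bL)$ containing $\id$ at $\ox$, which by minimality must equal $\cF(\bL)$, forcing $\phi|_{\cA} = 0$. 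A cleaner formulation is to consider the maximal de Rham sub-local system $\cF(\bL)_{\dR} \subset \cF(\bL)$ (which exists because sub-objects and sums of de Rham local systems are de Rham), to verify that $\id \in (\cF(\bL)_{\dR})_{\ox}$ using the $\phi$-invariance of $\tilde{\id}$, and to conclude $\cF(\bL)_{\dR} = \cF(\bL)$ by minimality. The main obstacle is precisely this last step: unlike the Hodge-Tate part where the Simpson correspondence cleanly identifies sub-Higgs bundles with sub-local systems, producing a genuine sub-local system of $\cF(\bL)$ from the $\phi$-invariant part of $\cA(\cF(\bL))$ (or cleanly constructing $\cF(\bL)_{\dR}$ with the required stalk property at $\ox$) requires careful use of the exactness of the decompleted Riemann--Hilbert functor $\RH^{\log}$ and the fact that it is an equivalence only on de Rham objects.
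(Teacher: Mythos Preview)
Your overall strategy is right, and the choice of the distinguished element $\id_{\bL_{\ox}}$ is exactly the key input. However, there is a genuine gap that recurs in every step of your argument: you repeatedly pass from a sub-Higgs-bundle of $\Hig^{\log}(\cF(\bL))$ (a summand $H_a$, or $\ker f$, or the $\phi$-invariants of $\cA$) to a sub-local-system of $\cF(\bL)|_{X_{\oK}}$ and then invoke minimality. Nothing in the paper establishes such a correspondence of sub-objects. Proposition~\ref{main decompleted rh}(iii) and Theorem~\ref{liu zhu rh}(iii) only give an isomorphism of \emph{endomorphism} algebras after base change to $C$; the $p$-adic Simpson functor $\bL\mapsto\Hig^{\log}(\bL)$ is not known to be fully faithful, let alone an equivalence, so a sub-Higgs-bundle need not arise from a sub-local-system. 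In particular, the ``obstacle'' you flag in the de Rham step is already present, and unresolved, in your Hodge--Tate step.

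The fix---and this is what the paper does---is to translate minimality once and for all into a statement about \emph{endomorphisms}, and then never mention sub-objects again. On the local-system side, minimality of $\cF(\bL)|_{X_{\oK}}$ says that any $g\in\End_{X_{\oK}}(\cF(\bL)|_{X_{\oK}})$ with $g_{\ox}(\id_{\bL_{\ox}})=0$ vanishes (its kernel would violate minimality). Tensoring with $C$ and transporting through the algebra isomorphism of Proposition~\ref{main decompleted rh}(iii), together with the compatibility $\id_{\bL_{\ox}}\otimes 1\mapsto\id_{\Hig^{\log}(\bL)_x}\otimes 1$ under Theorem~\ref{liu zhu rh}(v), yields: any $f\in\End_{\oX_{\Kinf},\Higgs}\Hig^{\log}(\cF(\bL))$ with $f(\id_{\Hig^{\log}(\bL)_x})=0$ is zero. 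Now apply this single lemma twice. First, the idempotent projector $e$ onto $\bigoplus_{a\notin\bZ}H_a$ is a Higgs endomorphism killing $\id_{\Hig^{\log}(\bL)_x}$ (since $\phi(\id)=0$), so $e=0$ and all weights are integral. Second, form $\cA(\cF(\bL))$ via Proposition~\ref{mock Ddr} and note that, exactly as in the proof of Theorem~\ref{de rham main}, a nonzero $\phi|_{\cA}$ would induce a nonzero nilpotent endomorphism of the Higgs bundle $\Hig^{\log}(\cF(\bL))$; this endomorphism again kills $\id_{\Hig^{\log}(\bL)_x}$, hence vanishes, and Lemma~\ref{senoperator criteria}(i) gives de Rham. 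There is no need to prove semi-simplicity (Hodge--Tate) as a separate intermediate step, and no need to produce any sub-local-system.
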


\begin{proof}
Note that the subspace $\cF(\bL)_{\ox}\subset \End_{\obQ_p}(\bL_{\ox})$ only depends on $\bL|_{X_{\oK}}$ so, in particular, for a finite extension $K'\supset K$ we have $\cF(\bL|_{X_{K'}})=\cF(\bL)|_{X_{K'}}$. We may thus freely replace $K$ by a finite extension. If $\bK$ is a local system of $E$-vector spaces on $X$ for a finite extension $E\supset \bQ_p$ we will denote by $\cF(\bK)$ the sub local system of $\bK^{\vee}\otimes_E\bK_{\ox}$ defined just as above in the case of $\obQ_p$-local systems. If $\bK_{\bQ_p}$ denotes the same $\bK$, considered as a $\bQ_p$-local system of rank $\rk_{E}\bK\cdot[E:\bQ_p]$, then $\cF(\bK)$ can be recovered from $\bK_{\bQ_p}$ by the formula $\cF(\bK)=\cF(\bK_{\bQ_p})\otimes_{E\otimes_{\bQ_p}E,m}E$. To prove the proposition, we may therefore work with $\bL$ a $\bQ_p$-local system to begin with. 

As in the proof of Theorem \ref{ht main}, let us start by translating the defining property of $\cF(\bL)$ into a statement about the corresponding Higgs bundle $\Hig^{\log}(\cF(\bL))$. By definition, $\cF(\bL)|_{X_{\oK}}$ has no proper sub local systems $\bM'\subset \cF(\bL)|_{X_{\oK}}$ whose stalk ${\bM}'_{\ox}$ contains $\id_{\bL_{\ox}}\in \cF(\bL)_{\ox}$. In particular, any endomorphism $f\in \End_{X_{\oK}}(\cF(\bL)|_{X_{\oK}})$ satisfying $f_{\ox}(\id_{\bL_{\ox}})=0$ has to be identically zero, as otherwise $\ker f$ would give an example of such $\bM'$. Under the comparison isomorphism $\cF(\bL)_{\ox}\otimes_{\bQ_p}C\simeq \Hig^{\log}(\cF(\bL))_x\otimes_{\Kinf}C$ the element $\id_{\bL_{\ox}}\otimes 1$ of the left hand side gets carried to the element $\id_{\Hig^{\log}(\bL)_x}\otimes 1\in \Hig^{\log}(\cF(\bL))_x\otimes_{\Kinf}C\subset \End_{\Kinf}(\Hig^{\log}(\bL)_x)\otimes_{\Kinf}C$ because $\id_{\bL_{\ox}}\in \End(\bL_{\ox})$ is stable under the Galois action. By comparison isomorphism of Proposition \ref{main decompleted rh} (iii), this implies that any endomorphism $f\in \End_{\oX_{\Kinf},\Higgs}\Hig^{\log}(\cF(\bL))$ satisfying $f(\id_{\Hig^{\log}(\bL)_x})=0$ has to be zero.

We can now conclude that $\cF(\bL)$ has integral Hodge-Tate weights. The direct sum $\bigoplus\limits_{i\in\bZ}\Hig^{\log}(\cF(\bL))_i\subset \Hig^{\log}(\cF(\bL))$ of generalized eigenbundles of $\phi$ with integral eigenvalues is a direct summand of the Higgs bundle $\Hig^{\log}(\cF(\bL))$ or, in other words, there is an idempotent $e\in\End_{X_{\Kinf},\Higgs}\Hig^{\log}(\cF(\bL))$ such that $\ker(e)=\bigoplus\limits_{i\in\bZ}\Hig^{\log}(\cF(\bL))_i$. Since $\id_{\Hig^{\log}(\bL)_x}\in \End(\Hig^{\log}(\bL)_x)=\End(\Hig(\bL_{\ox}))$ spans the result of applying $\Hig$ to the trivial Galois representation, $\phi(\id_{\Hig^{\log}(\bL)_x})=0$ and hence $e(\id_{\Hig^{\log}(\bL)_x})=0$. This forces $e$ to be zero, so every eigenvalue of $\phi$ on $\Hig^{\log}(\cF(\bL))$ is an integer.

We can now finish the proof of the proposition analogously to the proof of Theorem \ref{de rham main}. Consider the sheaf $\cA(\cF(\bL))=\RH^{\log}(\cF(\bL))^{\phi^{\rk(\cF(\bL))}=0}$ as in Proposition \ref{mock Ddr}. If $\cF(\bL)$ is not de Rham, then the Sen endomorphism on $\cA(\cF(\bL))$ induces, as described in the proof of Theorem \ref{de rham main}, a non-zero nilpotent endomorphism of the Higgs bundle $\Hig^{\log}(\cF(\bL))$. This endomorphism, however, annihilates the element $\id_{\Hig^{\log}(\bL)_x}\in \Hig^{\log}(\cF(\bL))_x$, so it has to be zero by the discussion above. Hence the local system $\cF(\bL)$ is de Rham.
\end{proof}

It is unclear if it is always possible to choose $\bM$ such that $\bL|_{X_{\oK}}\simeq \bM|_{X_{\oK}}$ (note that this is the case for a geometrically irreducible $\bL$ by Theorem \ref{main theorem}). In particular, I do not know the answer to the following question

\begin{question}
If $\bL_1,\bL_2$ are de Rham local systems on $X$ and $0\to\bL_1\to\bL\to\bL_2\to 0$ is an extension of local systems on $X$, does there exists an extension $0\to\bL_1\to\bL'\to\bL_2\to 0$ such that $\bL'$ is de Rham and there is an isomorphism $\bL|_{X_{\oK}}\simeq \bL'|_{X_{\oK}}$ of extensions of $\bL_2|_{X_{\oK}}$ by $\bL_1|_{X_{\oK}}$?
\end{question}

From now on we assume that $x$ is either a $K$-point of $X$ or a tangential base point of $X$ in the sense of \cite[\S 15]{delignetrois} and $\ox$ is a geometric base point above $x$. We can reformulate the above result in terms of the Galois action on the pro-algebraic completion of the fundamental group $\pi_1^{\et}(X_{\oK},\ox)$. We will now recall all the necessary information about pro-algebraic completions of pro-finite groups and refer the reader to Section 3.1 of \cite{drinfeld} for a more thorough introduction.

\begin{definition}
For a compact Hausdorff topological group $\Gamma$ the {\it pro-algebraic completion} $\Gamma^{\pro-\alg}$ is the pro-algebraic group over $\obQ_p$, equipped with a continuous map $\alpha:\Gamma\to \Gamma^{\pro-\alg}(\obQ_p)$ defined by the following universal property. For any continous homomorphism $\rho:\Gamma\to GL_n(\obQ_p)$ there exists a unique algebraic morphism $\rho^{\alg}:\Gamma^{\pro-\alg}\to GL_{n,\obQ_p}$ such that $\alpha\circ\rho^{\alg}=\rho$.
\end{definition}

Here, for a pro-algebraic group $G$ expressed as a limit $\lim\limits_{i\in I}G_i$ of algebraic groups $G_i$ we endow $G(\obQ_p)=\lim\limits_{i\in I}G_i(\obQ_p)$ with a structure of a topological group given by the limit of the topological groups $G_i(\obQ_p)$, each carrying the standard $p$-adic topology. Note that, as $\Gamma$ is assumed to be compact and Hausdorff, any continuous homomorphism $\Gamma\to GL_n(\obQ_p)$ factors through $GL_n(E)$ for some finite extension of $E$, by a Baire category theorem argument, cf. \cite[Lemma 2.2.1.1]{breuilmezard}.

We will denote by $\obQ_p[\Gamma^{\pro-\alg}]$ the ring of regular functions on $\Gamma^{\pro-\alg}$. Our main objects of interest are pro-algebraic compeltions of the profinite groups $\pi_1^{\et}(X_{\oK},\ox)$ and $\pi_1^{\et}(X,\ox)$, which we denote by $\pi_1^{\pro-\alg}(X_{\oK},\ox)$ and $\pi_1^{\pro-\alg}(X,\ox)$, respectively. The continuous action of $G_K$ on $\pi_1^{\et}(X_{\oK},\ox)$ induces an action on $\obQ_p[\pi_1^{\pro-\alg}(X_{\oK},\ox)]$.

\begin{pr}\label{galois on proalg pi1}
Any finite-dimensional $\obQ_p$-representation $V$ of $G_K$ that can be embedded into $\obQ_p[\pi_1^{\pro-\alg}(X_{\oK},\ox)]$ is de Rham.
\end{pr}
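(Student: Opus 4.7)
The plan is to realize $V$ as a sub-$G_K$-representation of the dual of the stalk of the canonical de Rham companion $\cF(\bL_C)$ for an appropriately chosen local system $\bL_C$ on $X$, and then to conclude via Proposition \ref{reducible explicit companion}.

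First, I would enlarge $V$ to a finite-dimensional $G_K$-stable subcoalgebra $C$ of $\obQ_p[\pi_1^{\pro-\alg}(X_{\oK},\ox)]$. Any finite-dimensional subspace is contained in a finite-dimensional subcoalgebra via finitely many applications of the coproduct, and the subcoalgebra generated by $V$ is automatically $G_K$-stable, since $G_K$ acts by coalgebra automorphisms via its conjugation action on $\pi_1^{\et}(X_{\oK},\ox)$. This subcoalgebra is also preserved by the right-translation action of $\pi_1^{\et}(X_{\oK},\ox)$. Using the splitting $\pi_1^{\et}(X,\ox) = \pi_1^{\et}(X_{\oK},\ox) \rtimes G_K$ coming from the $K$-point (or tangential base point) $x$, the subspace $C$ becomes a finite-dimensional representation of $\pi_1^{\et}(X,\ox)$, hence corresponds to a local system $\bL_C$ on $X$ with $(\bL_C)_{\ox} = C$.

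The key step is then to identify $C$, as a $G_K$-representation, with $\cF(\bL_C)_{\ox}^{\vee}$. I would consider the matrix coefficient map
$$\mu\colon C \otimes_{\obQ_p} C^{\vee} \to \obQ_p[\pi_1^{\pro-\alg}(X_{\oK},\ox)], \qquad w \otimes w^{\vee} \mapsto \bigl(\gamma \mapsto w^{\vee}(\gamma \cdot w)\bigr).$$
A short computation with the comultiplication on $C$ shows that the image of $\mu$ is exactly $C$: the inclusion $\mathrm{im}(\mu) \subset C$ follows from the fact that $\gamma \cdot w \in C$ for $w \in C$, while $\mathrm{im}(\mu) \supset C$ follows from the counit identity $\mu(w \otimes \mathrm{ev}_e) = w$. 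Under the canonical identification $C \otimes C^{\vee} \simeq \End_{\obQ_p}(C)$, the map $\mu$ takes the form $T \mapsto \bigl(\gamma \mapsto \tr(T R_{\gamma})\bigr)$, where $R_{\gamma}$ denotes the action of $\gamma$ on $C$, so its kernel is the annihilator $\cF(\bL_C)_{\ox}^{\perp}$ of $\cF(\bL_C)_{\ox} \subset \End_{\obQ_p}(C)$ under the non-degenerate, $G_K$-equivariant trace pairing. This gives a $G_K$-equivariant isomorphism $C \simeq \End_{\obQ_p}(C) / \cF(\bL_C)_{\ox}^{\perp} \simeq \cF(\bL_C)_{\ox}^{\vee}$.

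By Proposition \ref{reducible explicit companion}, $\cF(\bL_C)$ is a de Rham local system on $X$, so its stalk at $x$ is a de Rham $G_K$-representation, and the category of de Rham representations is closed under duality and passage to sub-representations. Therefore $V \subset C \simeq \cF(\bL_C)_{\ox}^{\vee}$ is de Rham, as desired. The main technical subtlety I expect is verifying that the isomorphism $C \simeq \cF(\bL_C)_{\ox}^{\vee}$ is genuinely $G_K$-equivariant, which depends on the compatibility $g \cdot (\gamma \cdot w) = (g \gamma g^{-1}) \cdot (g \cdot w)$ between the $G_K$-action and right translation on $C$; this is precisely the relation that makes $C$ into a $\pi_1^{\et}(X,\ox)$-representation and $\bL_C$ into a local system on $X$ rather than merely on $X_{\oK}$.
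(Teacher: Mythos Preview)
Your argument is correct and takes a genuinely different route from the paper's proof. Both proofs ultimately rest on Proposition \ref{reducible explicit companion}, but the packaging differs.

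The paper first observes that a function on $\pi_1^{\pro-\alg}(X_{\oK},\ox)$ with finite-dimensional $G_K$-orbit extends to a function on $\pi_1^{\pro-\alg}(X,\ox)$, then invokes Theorem \ref{main reducible} to replace an arbitrary representation $\rho$ of the arithmetic fundamental group by a de Rham one $\rho'$ whose matrix coefficients (restricted to the geometric group) contain those of $\rho$. Finally it identifies the $G_K$-action on these matrix coefficients with tensor constructions in the stalk $W=\rho'\circ x_*$, which is de Rham. Your approach is more direct: you build the arithmetic local system $\bL_C$ explicitly from the subcoalgebra $C$ via the semidirect product decomposition, and then give the clean identification $C\simeq\cF(\bL_C)_{\ox}^{\vee}$ through the trace pairing. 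This bypasses Theorem \ref{main reducible} entirely and makes the link between $V$ and the stalk of a de Rham local system completely explicit. Your equivariance check (that conjugation on $\End(C)$ matches the Galois action on both sides of the trace pairing) is the crux, and it goes through as you describe.

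One minor omission: when $x$ is a tangential base point rather than a $K$-point, the passage from ``$\cF(\bL_C)$ is de Rham as a local system'' to ``its fiber at $x$ is a de Rham Galois representation'' is not automatic. The paper handles this by citing \cite[Corollary 4.3.4]{dllz} on the compatibility of $D_{\dR}$ with nearby cycles; you should insert the same reference at that step.
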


\begin{proof}
The span of the $G_K$-orbit of a function $f\in \obQ_p[\pi_1^{\pro-\alg}(X_{\oK},\ox)]$ is finite-dimensional if and only if $f$ extends to a regular function on the pro-algebraic completion of the arithmetic fundamental group $\pi_1^{\et}(X,\ox)$. Therefore $V$ is contained in the image of the restriction map $\obQ_p[\pi_1^{\pro-\alg}(X,\ox)]\to \obQ_p[\pi_1^{\pro-\alg}(X_{\oK},\ox)]$.

By Theorem \ref{main reducible}, for any continuous homomorphism $\rho:\pi_1^{\et}(X,\ox)\to GL_n(\obQ_p)$ there exists another one $\rho':\pi_1^{\et}(X,\ox)\to GL_{n+m}(\obQ_p)$ such that the image of the restriction map $\rho^*:\obQ_p[GL_{n,\obQ_p}]\to \obQ_p[\pi_1^{\pro-\alg}(X_{\oK,\ox})]$ is contained in the image of the restriction map $\rho'^*$, and $\rho'$ corresponds to a de Rham local system. 

Denote by $W$ the Galois representation obtained by restricting $\rho'$ along $x_*:G_K=\pi_1^{\et}(\Spec K)\to\pi_1^{\et}(X, \ox)$. If $x\in X(K)$ is a classical base point then $W$ is evidently de Rham. If $x$ is a tangential base point, the fact that $W$ is de Rham was proven in \cite[Corollary 4.3.4]{dllz} as a consequence of their results on the compatibility of the functor $D_{\dR}$ with the functor of nearby cycles. The image $\rho'^*(\obQ_p[GL_{n+m,\obQ_p}])\subset \obQ_p[\pi_1^{\pro-\alg}(X_{\oK},\ox)]$ is a direct sum of Galois representations of the form $W^{\otimes a}\otimes W^{\vee\otimes b}\otimes (\det W)^{c}$ for varying $a,b\in\bZ_{\geq 0},c\in\bZ$. Since $W$ is de Rham, the image of $\rho'^*$ is thus a union of de Rham representations. We have therefore proven that the image of $\obQ_p[\pi_1^{\pro-\alg}(X,\ox)]\to \obQ_p[\pi_1^{\pro-\alg}(X_{\oK},\ox)]$ is a union of finite dimensional de Rham representations, because any function on the pro-algebraic completion $\pi_1^{\pro-\alg}(X,\ox)$ factors through some morphism of pro-algebraic groups $\pi_1^{\pro-\alg}(X,\ox)\to GL_{n,\obQ_p}$. 
\end{proof}

\begin{cor}
Let $X$ be a smooth variety over a number field $F$, equipped with a base point $x$ that is either a classical point $x\in X(F)$ or a tangential base point. Any finite-dimensional representation of $G_F$ contained in $\obQ_p[\pi_1^{\pro-\alg}(X_{\oF},\ox)]$ is geometric in the sense of Fontaine-Mazur: it is de Rham at all places $v$ of $F$ above $p$ and it is almost everywhere unramified.
\end{cor}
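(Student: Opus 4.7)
The plan is to prove each of the two required properties by reducing to results already established in the paper.

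For the de Rham condition at a place $v \mid p$ of $F$, I would simply localize. Fixing an embedding $\oF \hookrightarrow \overline{F_v}$, the invariance of the \'etale fundamental group of a smooth variety over an algebraically closed field of characteristic zero under extension of the base field gives an isomorphism $\pi_1^{\et}(X_{\overline{F_v}}, \ox) \iso \pi_1^{\et}(X_{\oF}, \ox)$, and hence an isomorphism of their pro-algebraic completions. Under this identification, $V$ becomes a finite-dimensional $G_{F_v}$-stable subspace of $\obQ_p[\pi_1^{\pro-\alg}(X_{\overline{F_v}}, \ox)]$, and Proposition \ref{galois on proalg pi1} applied with $K = F_v$ gives that $V|_{G_{F_v}}$ is de Rham.

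For the almost-everywhere unramifiedness, I would mimic the opening reduction in the proof of Proposition \ref{galois on proalg pi1}: $V$ lies in the image of the pullback $\rho^{*} : \obQ_p[GL_{n, \obQ_p}] \to \obQ_p[\pi_1^{\pro-\alg}(X_{\oF}, \ox)]$ for some continuous representation $\rho : \pi_1^{\et}(X, \ox) \to GL_n(\obQ_p)$, and this image may be identified with the coordinate ring $\obQ_p[H]$ of the Zariski closure $H \subset GL_{n, \obQ_p}$ of $\rho(\pi_1^{\et}(X_{\oF}, \ox))$. The $G_F$-action on $\obQ_p[H]$ factors through the conjugation action of $W = \rho \circ s$ on $H$, where $s : G_F \to \pi_1^{\et}(X, \ox)$ is the splitting coming from the base point. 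After enlarging $S$ so that $(\fX, \oX \setminus \fX)$ extends to a smooth pair with normal crossings boundary over $\cO_{F, S}$, I would invoke the Lieblich--Olsson result already used in the proof of Proposition \ref{automatically unramified}: for any $v \notin S$ of residue characteristic $\ell \neq p$, the outer action of $I_v$ on $\pi_1^{\et}(X_{\oF}, \ox)^{(\ell')}$ is trivial, and since $\rho|_{\pi_1^{\et}(X_{\oF}, \ox)}$ factors modulo $p^n$ through this prime-to-$\ell$ quotient for every $n$, this forces $W(I_v)$ to centralize $\rho(\pi_1^{\et}(X_{\oF}, \ox))$ and hence to lie in $Z_{GL_n}(H)$. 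The conjugation action of any element of $Z_{GL_n}(H)$ on $H$ is trivial, so $I_v$ acts trivially on $\obQ_p[H] \supset V$, and $V$ is unramified at every place outside $S \cup \{v : v \mid p\}$.

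The hardest step is the conceptual observation that one does not need the stronger scalar condition on $W(I_v)$ that the proof of Proposition \ref{automatically unramified} extracted from geometric irreducibility: the weaker condition $W(I_v) \in Z_{GL_n}(H)$, which follows from the Lieblich--Olsson commutation without any irreducibility assumption, already suffices here, precisely because the $G_F$-action on $V$ is by conjugation on $H$ rather than by left or right translation on its coordinate ring.
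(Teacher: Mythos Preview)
Your approach is correct and runs parallel to the paper's, but the unramifiedness half needs two small patches.

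First, the claim that $\rho|_{\pi_1^{\et}(X_{\oF},\ox)}$ factors modulo $p^n$ through the prime-to-$\ell$ quotient only holds when $\ell$ does not divide the order of the image in $GL_n(\cO_E/p^n)$; the primes that can occur are $p$ together with the finitely many divisors of $|GL_n(\cO_E/\fm_E)|$. You must enlarge $S$ to include these as well. The paper expresses this step by noting that the geometric monodromy factors through the pro-$S$ completion $\pi_1^{\et}(X_{\oF},\ox)^{(S)}$ for a finite set $S$ of rational primes, and then works with that quotient.

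Second, triviality of the \emph{outer} action of $I_v$ is not enough for your conclusion: if conjugation by $s(g)$ on $\pi_1^{\et}(X_{\oF},\ox)^{(\ell')}$ were only an inner automorphism by some $h_g$, then $W(g)$ would act on $H$ as conjugation by $\rho(h_g)$, which is generally nontrivial on $\obQ_p[H]$. What you need---and what Lieblich--Olsson actually gives once the base point $x$ extends to $\fX(\cO_{F,S})$ (add this to your conditions on $S$; the tangential case requires a little extra care)---is that the honest action of $I_v$ on $\pi_1^{\et}(X_{\oF},\ox)^{(\ell')}$ is trivial. The paper isolates this as a separate specialization lemma comparing the prime-to-$\ell$ fundamental groups of the geometric generic and special fibres equivariantly for the Galois action.

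With these two fixes your argument is equivalent to the paper's. The de Rham part is identical, and your centralizer reformulation---that $W(I_v)\subset Z_{GL_n}(H)$ already kills the conjugation action on $\obQ_p[H]$, without any irreducibility---is indeed the crux.
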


\begin{proof}
Let $V\subset \obQ_p[\pi_1^{\pro-\alg}(X_{\oF},\ox)]$ be a finite-dimensional subspace stable under the action of $G_F$. The de Rhamness has just been proven, so it remains to prove that $V$ is an almost unramified representation. By definition, there exists a map of pro-algebraic groups $f:\pi_1^{\pro-\alg}(X_{\oF},\ox)\to GL_{n,\obQ_p}$ such that every function from $V$ factors through it. Since the image of $\pi_1^{\et}(X_{\oF},\ox)$ under $f$ is contained in $GL_n(\cO_E)\subset GL_n(\obQ_p)$ for some finite extension $E\supset \bQ_p$, there exists a finite set $S$ of rational primes such the restriction of $f$ to $\pi_1^{\et}(X_{\oF},\ox)$ factors through the pro-$S$ completion $\pi_1^{\et}(X_{\oF},\ox)^{(S)}$ of this group. Therefore, $V$ lies in the image of the canonical map $\obQ_p[\pi_1^{\pro-\alg}(X_{\oF},\ox)^{(S)}]\to \obQ_p[\pi_1^{\pro-\alg}(X_{\oF},\ox)]$. After enlarging $S$ we may assume that $X$ is the generic fiber of a smooth $\cO_{F,S}$-scheme $\fX$ that can be established as the complement to a horizontal normal crossings divisor $\fD\subset \overline{\fX}$ in a smooth proper $\overline{\fX}$ and, moreover, $x$ extends to a point in $\fX(\cO_{F,S})$. By Lemma \ref{pi1 specialization} below the action of $G_F$ on $\pi_1^{\pro-\alg}(X_{\oF},\ox)^{(S)}$ is unramified at all places not lying above $S$. Therefore $V$ is almost everywhere unramified.
\end{proof}

\begin{lm}\label{pi1 specialization}
Let $v$ be a finite place of $F$ of residue characteristics $l=\mathrm{char}\, k(v)$. Suppose that $\cY$ is a smooth scheme over $\cO_{F_v}$ that can be embedded into a smooth proper $\cO_{F_v}$ scheme $\overline{\cY}$ such that $\overline{\cY}\setminus\cY$ is a relative normal crossings divisor. If $y$ is a classical or tangential base point of $\cY$ then there is an isomorphism

\begin{equation}
\pi_1^{\et}(\cY_{\oF_v},\oy_{F_v})^{(l')}\simeq \pi_1^{\et}(\cY_{\overline{k(v)}},\oy_{k(v)})^{(l')}
\end{equation}

compatible with the action of $G_{F_v}$ where the action on the right hand side is given by precomposing the natural action of $G_{k(v)}$ with the surjection $G_{F_v}\to G_{k(v)}$. Here $y_{F_v}$ (resp. $y_{k(v)}$) denotes the base point of $\cY_{F_v}$ (resp. $\cY_{k(v)}$) induces by $y$.
\end{lm}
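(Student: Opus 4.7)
The plan is to deduce the lemma from the tame specialization theorem of Grothendieck-Murre. Since $\overline{\cY}\setminus \cY=\cD$ is a relative normal crossings divisor over $\Spec \cO_{F_v}$, both $(\overline{\cY}_{\oF_v},\cD_{\oF_v})$ and $(\overline{\cY}_{\overline{k(v)}},\cD_{\overline{k(v)}})$ are smooth compactifications of their interiors with normal crossings boundary, so one can form the tame fundamental groups $\pi_1^{\tame}(\cY_{\oF_v},\oy_{F_v})$ and $\pi_1^{\tame}(\cY_{\overline{k(v)}},\oy_{k(v)})$ with respect to these compactifications. The first reduction is to replace $\pi_1^{\et}(-)^{(l')}$ by $\pi_1^{\tame}(-)^{(l')}$ on both sides: any connected finite étale cover of degree prime to $l$ has prime-to-$l$ (hence tame) ramification along $\cD$, so the two prime-to-$l$ quotients coincide.

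Next I would invoke the specialization theorem for the tame fundamental group of a smooth scheme with relative normal crossings compactification over a henselian trait. Writing $\cO_{F_v}^{\mathrm{sh}}$ for a strict henselization with fraction field $F_v^{\mathrm{sh}}$ and residue field $\overline{k(v)}$, Grothendieck-Murre's theorem (generalizing SGA 1, Exp. X from the proper case) produces a specialization morphism
\[
\mathrm{sp}\colon \pi_1^{\tame}(\cY_{\oF_v},\oy_{F_v})\longrightarrow \pi_1^{\tame}(\cY_{\overline{k(v)}},\oy_{k(v)})
\]
that is an isomorphism on the maximal prime-to-$l$ quotients (where one uses that $\overline{\cY}\to\Spec\cO_{F_v}^{\mathrm{sh}}$ is smooth proper with relative NCD boundary, and that smooth base change holds for prime-to-$l$ coefficients). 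Combined with the previous paragraph, this gives the desired isomorphism.

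It remains to check $G_{F_v}$-equivariance. The point is that the specialization map is natural in the pair $(\overline{\cY},\cD)$ over the base, so it fits into a square in which every arrow is $G_{F_v}$-equivariant with respect to its natural action. On the right hand side the geometric special fiber $\cY_{\overline{k(v)}}$ is obtained from $\cY_{k(v)}$ by base change along $\Spec \overline{k(v)}\to \Spec k(v)$; thus the $G_{F_v}$-action on its tame fundamental group is, by functoriality, the composition of the natural $G_{k(v)}$-action with the canonical surjection $G_{F_v}\twoheadrightarrow G_{k(v)}$. The equivariance of $\mathrm{sp}$ then transports this factorization to the left hand side, yielding the claimed compatibility. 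The one step I expect to be the most delicate is making sense of the base point when $y$ is tangential rather than classical: here I would use that a tangential base point at a component of $\cD$ spreads out over $\Spec\cO_{F_v}$ (because $\cD$ is a relative NCD), so that $\oy_{F_v}$ and $\oy_{k(v)}$ come from the same integral tangential section; Deligne's functoriality of the fundamental groupoid with tangential base points (\cite[\S15]{delignetrois}) then gives the isomorphism and the equivariance on the nose.
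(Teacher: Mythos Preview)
Your proposal is correct and follows essentially the same approach as the paper. The paper's proof is a one-line citation: it invokes \cite[Theorem A.7]{lieblicholsson} (or \cite[Corollaire X.3.9]{SGA1} in the proper case) to get an equivalence of categories of prime-to-$l$ Galois covers of $\cY_{\oF_v}$, $\cY_{\cO^{nr}_{F_v}}$, and $\cY_{\overline{k(v)}}$, and then notes compatibility with the fiber functors given by $y$. Your route through the tame fundamental group and Grothendieck--Murre is the same specialization theorem in an older packaging, and your extra paragraphs on $G_{F_v}$-equivariance and tangential base points simply spell out what the paper's phrase ``compatible with the fiber functors induced by $y$'' means.
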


\begin{proof}
Let $\cO^{nr}_{F_v}$ denote the completed ring of integers of the maximal unramified extension of $F_v$. By \cite[Theorem A.7]{lieblicholsson} (or \cite[Corollaire X.3.9]{SGA1} if $\cY=\overline{\cY}$) the categories of Galois covers of degrees prime to $l$ of $\cY_{\oF_v}$, $\cY_{\cO^{nr}_{F_v}}$ and $\cY_{\overline{k(v)}}$ are equivalent via the natural functors. In particular, these equivalences are compatible with the fiber functors induced by $y$ which implies the result.
\end{proof}

\bibliographystyle{alpha}
\bibliography{bibderham}

\end{document}